\definecolor{mylinkcolor}{rgb}{0.8,0,0}
\definecolor{myurlcolor}{rgb}{0,0,0.8}
\definecolor{mycitecolor}{rgb}{0,0,0.8}
\newtheorem{defn}{Definition}[section]
\newtheorem{definition}[defn]{Definition}
\newtheorem{corollary}[defn]{Corollary}
\newtheorem{lemma}[defn]{Lemma}
\newtheorem{thm}[defn]{Theorem}
\newtheorem{theorem}[defn]{Theorem}
\newtheorem{prop}[defn]{Proposition}
\newtheorem{proposition}[defn]{Proposition}
\newtheorem{conj}[defn]{Conjecture}
\theoremstyle{definition}
\newtheorem{remark}[defn]{Remark}
\newtheorem{example}[defn]{Example}
\newcommand{\CC}{\mathbb C}
\newcommand{\QQ}{\mathbb Q}
\newcommand{\Q}{\mathbb Q}
\newcommand{\ZZ}{\mathbb Z}
\newcommand{\Z}{\mathbb Z}
\newcommand{\FF}{\mathbb F}
\newcommand{\PP}{\mathbb P}
\newcommand{\SL}{\operatorname{SL}}
\newcommand{\PSL}{\operatorname{PSL}}
\newcommand{\Ker}{\operatorname{Ker}}
\newcommand{\Gal}{\operatorname{Gal}}
\newcommand{\GL}{\operatorname{GL}}
\newcommand{\PGL}{\operatorname{PGL}}
\renewcommand{\Im}{\operatorname{Im}}
\newcommand{\im}{\mathrm{im}\,}
\newcommand{\Syl}{\operatorname{Syl}}
\newcommand{\disc}{\operatorname{disc}}
\begin{document}



\title{Near Coincidences and Nilpotent Division Fields}

\author{Harris B. Daniels}
\address{Department of Mathematics and Statistics, Amherst College, Amherst, MA 01002, USA}
\email{hdaniels@amherst.edu}
\urladdr{http://www3.amherst.edu/~hdaniels/}

\author{Jeremy Rouse}
\address{Department of Mathematics, Wake Forest University, Winston-Salem, NC 27109, USA}
\email{rouseja@wfu.edu}
\urladdr{https://users.wfu.edu/rouseja/}

\keywords{Elliptic curves, Division fields, Galois groups, Modular curves}
{}
\subjclass[2020]{Primary: 11G05, Secondary: 11R32, 14H52.}

\begin{abstract}
Let $E/\QQ$ be an elliptic curve. We say that $E$ has a near coincidence of level $(n,m)$ if $m \mid n$ and $\QQ(E[n]) = \QQ(E[m],\zeta_{n})$. We classify near coincidences of prime power level and use this result to give a classification of values of $n$ for which $\Gal(\QQ(E[n])/\QQ)$ is a nilpotent group. Along the way we prove a Gauss-Wantzel analog for the elliptic curve $E\colon y^2 = x^3-x$, showing that $\QQ(E[n])/\QQ$ is constructible if and only if $\varphi(n)$ is a power of 2. Assuming that there are no non-CM rational points on the modular curves $X_{ns}^{+}(p)$ for primes $p > 11$, we show that $\Gal(\QQ(E[n])/\QQ)$ nilpotent implies that $n$ is a power of $2$ or $n \in \{ 3, 5, 6, 7, 15, 21 \}$.
\end{abstract}

\maketitle
\section{Introduction}
For much of mathematical history, compass and straightedge constructions have served as exemplars of mathematical reasoning. 
At the center of the study of compass and straightedge constructions is the question of which regular polygons are constructible. First considered by the Greeks, an answer to this question remained elusive to mathematicians for hundreds of years.
For centuries after this question was first considered, no progress was made. This changed when Gauss proved the following theorem:

\begin{thm}\cite{Gauss1801}
Suppose that $n$ is of the form 
\[
	n = 2^k\cdot p_1\cdots p_j
\] 
where $p_i$ is a Fermat prime. Then it is possible to construct a regular $n$-gon using only a straightedge and compass. 
\end{thm}
Here we remind the reader that a Fermat prime is a Fermat number (i.e., a number of the form $2^{2^{m}} + 1$ with $m \geq 0$) which is prime.

In fact, in \cite{Gauss1801} Gauss states that he has a proof of the converse, but he says, ``the limits of the present work exclude this demonstration here.'' The final part of the question was formally answered in 1837 when Pierre Wantzel proved the converse. 

\begin{thm}\cite{Wantzel}
Suppose $n$ is a positive integer such that a regular $n$-gon is constructible. Then it must be that $n$ is of the form 
\[
	n = 2^k\cdot p_1\cdots p_j
\] 
where each $p_i$ is a Fermat prime.
\end{thm}

Together, these two results together form the Gauss--Wantzel theorem. If we let $\varphi$ be Euler's totient function, then we can summarize the Gauss--Wantzel theorem as saying a regular $n$-gon is constructible if and only if there exists a $k$ such that $\varphi(n) = 2^k$.

A modern reformulation of this is to take the standard plane and turn it into the complex plane. A point in the complex plane is said to be {\it \bfseries constructible} if there is a way to define that point using just a straightedge and compass. Interpreting the Gauss--Wantzel theorem through this more modern lens gives us a new interpretation of what it means for a complex number to be constructible. 

\begin{thm}\cite[Theorem 4.53]{Rotman}\label{thm:ConstField}
Let $\alpha \in \CC$. Then, $\alpha$ is constructible if and only if there exists an ascending chain of fields
\[
	\QQ=K_0\subseteq K_1 \subseteq K_2 \subset \dots \subseteq K_j
\]
such that $\alpha\in K_j$ and $[K_{i}\colon K_{i-1}]=2.$
\end{thm}

A subfield of $\CC$ is said to be {\it \bfseries constructible} if all of the elements in the field are constructible. Using Galois theory from here we can update Theorem \ref{thm:ConstField} to say that $\alpha$ is constructible if and only if there is a Galois extension $K/\QQ$ that contains $\alpha$ such that $\Gal(K/\QQ)$ is a 2-group. 
Given that the $n$th roots of unity are equally spaced around the unit circle, from this updated perspective, the question of about the constructability of a regular $n$-gon is exactly the same as the question about the constructability of $\QQ(\zeta_n)$. Here $\zeta_n$ is a primitive $n$th root of unity.
This modern perspective  allows us to examine the constructability of more complicated fields. The primary objects of study in this paper are the division fields of elliptic curves. 

First we fix $\overline{\QQ}$, an algebraic closure of $\QQ$ and let $E/\QQ$ be an elliptic curve. A classical result is that the points on $E$ can be given the structure of an abelian group. We let $E[n]$ be the $n$-torsion on $E$ defined over $\overline{\QQ}$, that is $E[n] = \{P\in E(\overline{Q}) \colon n\cdot P = \mathcal{O} \}$. Then we can define $\QQ(E[n])$ to be the field of definition of the $n$-torsion points. With this we present the following theorem.

\begin{thm}{\rm (A Gauss-Wantzel Analog)}
Let $E$ be the elliptic curve given by $y^2 = x^3-x$ and let $n\geq 2$. Then, $\QQ(E[n])$ is a constructible field if and only if $\varphi(n) =2^k$ for some integer $k$.
\end{thm}

\begin{proof}
Suppose $n\in\ZZ$ such that $\QQ(E[n])/\QQ$ is a constructible extension. By properties of the Weil pairing, $\QQ(\zeta_n)\subseteq \QQ(E[n])$, and so by the Gauss--Wantzel Theorem it must be that $\varphi(n)$ is a power of 2. 

In the opposite direction, suppose that $n\in \ZZ$ such that $\varphi(n)$ is a power of 2. Given that constructability is closed under compositum, it is enough to understand when $\QQ(E[p^k])$ is constructible for some prime power $p^{k}$. 

Suppose $p=2$. In this case, $\QQ(E[2]) = \QQ$ 
and $[\QQ(E[2^{k+1}]):\QQ(E[2^{k}])] = 2^m$ 
for some $m\in\ZZ$ depending on $k$. Thus, for any $k$, $\Gal(\QQ(E[2^k])/\QQ)$ is a 2-group and hence constructible. 

Next suppose that $p = 2^{2^{n}} + 1$ is a Fermat prime and $E : y^{2} = x^{3} - x$. 
We start this case by showing that $[\Q(E[p],i) : \Q(i)]$ is a power of $2$, which implies the desired result. 
The endomorphism ring of $E$ over $\Q(i)$ is isomorphic to $\Z[i]$. 
Moreover, since $p \equiv 1 \pmod{4}$ splits we can factor $p = \pi \overline{\pi}$ for $\pi \in \Z[i]$. 
(Explicitly, we have $\pi = 2^{2^{n-1}} + i$ and $\overline{\pi} = 2^{2^{n-1}} - i$.)
We therefore have two isogenies $\phi_{1} \colon E \to E$ and $\phi_{2} \colon E \to E$ given by $\phi_{1}(P) = [\pi] P$ and $\phi_{2}(P) = [\overline{\pi}]P$ which are defined over $\Q(i)$. Let $E[\pi]$ and $E[\overline{\pi}]$ be the kernels of these two isogenies. 
It is straightforward to see that $E[p]$ is the direct product of $E[\pi]$ and $E[\overline{\pi}]$, that each of $E[\pi]$ and $E[\overline{\pi}]$ has order $p$, and both $E[\pi]$ and $E[\overline{\pi}]$ are Galois stable over $\QQ(i)$. 
This shows that the image of the mod $p$ Galois representation
$\rho_{E,p} \colon \Gal(\Q(E[p],i)/\Q(i)) \to \GL_{2}(\mathbb{F}_{p})$ is conjugate to a subgroup of the diagonal matrices. 
The set of diagonal matrices
has order $(p-1)^{2} = 2^{2^{n+1}}$. 
Since $\rho$ is injective, Lagrange's theorem implies that $[\Q(E[p],i) : \Q(i)]$ is a power of $2$.
\end{proof}

One might think that the more general question of when $\Gal(\QQ(E[n])/\QQ)$ can be a $p$-group would add some interest, but it turns out the only interesting case here is when $p=2$. 
This is easy to see since $[\QQ(E[n]):\QQ]$ is even for all $n\geq 3$.
If we want to study a condition on $\Gal(\QQ(E[n])/\QQ)$ that is more general than being a $p$-group, but is restrictive enough to yield meaningful results, a naive thing to do would be to study when $\Gal(\QQ(E[n])/\QQ)$ is the direct product of a finite number of $p$-groups. 
This allows some additional flexibility, but still imposes some structure on the situation. 

While at first this seems arbitrary and naive, this is a very natural thing to do.
\begin{prop}\cite[Proposition II.7.5]{hungerford}
Let $G$ be a finite group, then $G$ is nilpotent if and only if it is the direct product of its $p$-Sylow subgroups. 
\end{prop}
Thus, our naive consideration is actually logically equivalent to the condition that $\Gal(\QQ(E[n])/\QQ)$ is nilpotent. Looking at the literature, we find the following result about when $\Gal(\QQ(E[n])/\QQ)$ is abelian. 

\begin{thm}\cite[Theorem 1.1]{AbelDivFields}
Let $E/\QQ$ be an elliptic curve and $n\geq 2$. If $\QQ(E[n])/\QQ$ is an abelian extension, then $n=2,3,4,5,6,$ or $8$.
\end{thm}

With this added context, our new goal is to understand when $\Gal(\QQ(E[n])/\QQ)$ is nilpotent for $E/\QQ$. To that end, we give the following definition:
\begin{definition}
	Let $K/k$ be a Galois extension of fields. We say that $K/k$ is a {\it \bfseries nilpotent extension} if $\Gal({K}/k)$ is a nilpotent group. When the base field $k$ is clear from context, we will just say that $K$ is a {\bfseries nilpotent field} for brevity.
\end{definition}

We will be able to give a complete answer to the question of when $\QQ(E[n])/\QQ$ is nilpotent that is conditional on a positive answer to Serre's uniformity question.

\begin{conj}\label{conj:Uniformity}{\rm \cite[Conjecture 1.1]{ZywSurj}, \cite[Conjecture 1.1.5]{RSZB}}
If $p > 11$, then there is no non-CM elliptic curve $E/\Q$ for which the image of the mod $p$ Galois representation is contained in the normalizer of the non-split Cartan subgroup.
\end{conj}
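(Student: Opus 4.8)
The statement is the uniformity conjecture for the normalizer of the non-split Cartan subgroup, which is a well-known \emph{open} problem rather than a result the authors establish; they invoke it only as a hypothesis. So rather than a self-contained proof, what I can offer is the shape of an attack together with an honest account of where it breaks down. The starting point is the standard reformulation in terms of rational points on a modular curve: a non-CM elliptic curve $E/\Q$ whose mod-$p$ image lands in the normalizer of a non-split Cartan subgroup corresponds to a non-cuspidal, non-CM rational point on $X_{ns}^+(p)$, the quotient of $X(p)$ by that normalizer. Thus the conjecture asserts that for $p > 11$ the only rational points on $X_{ns}^+(p)$ are cusps and CM points. Since the genus of $X_{ns}^+(p)$ grows like $p^2/24$, Faltings' theorem guarantees that $X_{ns}^+(p)(\Q)$ is finite for all $p \geq 11$, but ineffectively; the entire difficulty is to pin down these rational points \emph{explicitly and uniformly in $p$}.

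The first approach I would try is the combination of Mazur's formal-immersion method with Runge's method, following the resolution of the split-Cartan case by Bilu--Parent and Bilu--Parent--Rebolledo. One maps $X_{ns}^+(p)$ to a well-chosen quotient of its Jacobian whose Mordell--Weil group can be controlled, uses modular units to obtain a Runge-type inequality confining any rational point to a small $p$-adic residue disk, and then invokes a formal immersion at an auxiliary prime to eliminate the surviving points. This strategy succeeded in the split case, but the non-split case is far more resistant: one lacks the convenient intermediate curve (the role played by $X_0(p^2)$ in the split setting), and the explicit modular units needed to run Runge's method are much harder to organize.

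Failing that, the modern route is $p$-adic. Classical Chabauty--Coleman would require $\operatorname{rank} J(\Q) < g$, but for these curves the numerical and heuristic evidence is that the Jacobian of $X_{ns}^+(p)$ tends to have Mordell--Weil rank equal to (or very close to) its genus, so classical Chabauty does not apply. This is precisely the regime for which \emph{quadratic Chabauty} was developed, and it is how Balakrishnan--Dogra--M\"uller--Tuitman--Vonk resolved the case $p = 13$ (the ``cursed curve''). In principle one would, for each $p$: write down an explicit model of $X_{ns}^+(p)$; compute a basis of regular differentials and the $p$-adic heights entering the quadratic Chabauty function; determine the finitely many common zeros of that function; and finally match them against the known cusps and CM points, discarding any spurious solutions with a Mordell--Weil sieve.

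The main obstacle---and the reason the conjecture remains open---is \emph{uniformity in $p$}. Each of the methods above is essentially curve-by-curve: as $p$ grows the genus explodes, explicit models together with the attendant Coleman and height computations become intractable, and the rank-versus-genus comparison that Chabauty needs is only conjecturally controlled. No known technique forces the required absence of unexpected rational points for \emph{all} $p > 11$ simultaneously, which is exactly why Daniels--Rouse (and everyone else) treat this statement as a hypothesis rather than a theorem. I would therefore expect any honest proposal to concede that, with present technology, the best one can do is verify the conjecture for each individual $p$ by quadratic Chabauty or its refinements, while hoping that a future uniform ingredient---an unconditional bound on the Mordell--Weil rank, say, or a genuinely non-abelian Chabauty argument---eventually closes the infinitely many remaining cases at once.
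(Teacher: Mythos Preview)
Your reading is correct: this statement is labeled as a \emph{conjecture} in the paper and is never proved there. The authors cite it from Zywina and Rouse--Sutherland--Zureick-Brown and use it only as a standing hypothesis in parts of Theorem~\ref{thm:main_Nilp}, explicitly flagging which conclusions are conditional on it. There is therefore no ``paper's own proof'' to compare against; the paper offers none and does not claim one.

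Your surrounding discussion of why the problem is hard and what partial methods exist (formal immersion/Runge in the split case, quadratic Chabauty for $p=13$, the rank-equals-genus obstruction) is accurate and appropriate context, but it goes well beyond what the paper itself says. If the assignment is to reproduce or compare with the paper's argument for this statement, the complete and correct answer is simply that the paper treats it as an open conjecture and provides no proof.
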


Before stating our next theorem we remind the reader that a Mersenne prime is a prime which is one less than a power of $2$. Such a prime always has the form $2^{p} - 1$ for $p$ a prime. 



\begin{theorem}
\label{thm:main_Nilp}
Let $E/\QQ$ be an elliptic curve.
\begin{enumerate}
\item If $E$ does not have complex multiplication, $\QQ(E[n])/\QQ$ is nilpotent, and Conjecture~\ref{conj:Uniformity} holds, then $n \in \{ 3, 5, 6, 7, 15, 21 \} \cup \{ 2^{k} \colon k \in \ZZ^{+} \}$. Each of these cases occurs for infinitely many different rational $j$-invariants.
\item If $E$ does not have complex multiplication, $n \not\in \{ 3, 5, 6, 7, 15, 21 \} \cup \{ 2^{k} \colon k \in \ZZ^{+} \}$, then $\QQ(E[n])/\QQ$ is nilpotent if and only if one of the following holds: (i) $n$ is a product of distinct Mersenne primes with the property that the mod $p$ image of Galois is contained in the normalizer of the non-split Cartan for all primes $p \mid n$, or (ii) $n$ is twice a product of distinct Mersenne primes with the property that the mod $p$ image of Galois is contained in the normalizer of the non-split Cartan, and the mod $2$ image has RSZB label {\tt 2.2.0.1}. 
\item If $E$ has complex multiplication by the order of discriminant $D \in \{ -4, -7, -8, -12, -16, -28 \}$, then $\QQ(E[n])/\QQ$ is nilpotent if and only if $n$ is a power of two times a product of distinct Mersenne and Fermat primes, where the Mersenne primes are inert in the CM field and the Fermat primes are split in the CM field.
\item If $E$ has complex multiplication by the order of discriminant $D \in \{ -11, -19, -43, -67, -163 \}$, then $\QQ(E[n])/\QQ$ is nilpotent if and only if $n$ is a product of distinct Mersenne and Fermat primes, where the Mersenne primes are inert in the CM field and the Fermat primes are split in the CM field.
\item If $E$ has complex multiplication by the order of discriminant $D = -27$, then $\QQ(E[n])/\QQ$ is never nilpotent.

\item If $j(E) = 0$, then $E$ is isomorphic over $\QQ$ to an elliptic curve of the form $E_d\colon y^2= x^3+d.$ Then, $\QQ(E_d[n])/\QQ$ is nilpotent if and only if $n = p$ is a prime and
\[
\begin{cases}
d \equiv 1 \pmod{(\QQ^{\times})^{3}} \text{ if } p = 2,\\
d \equiv 2 \pmod{(\QQ^{\times})^{3}} \text{ if } p = 3,\\
d \equiv 2\cdot p^{\frac{p-1}{3}} \pmod{(\QQ^{\times})^{3}} \text{ if }
p = 3 \cdot 2^{k} + 1\hbox{ for some $k\geq 1$},\\
d \equiv 2\cdot p^{\frac{p+1}{3}} \pmod{(\QQ^{\times})^{3}} \text{ if }
p = 3 \cdot 2^{k} - 1\hbox{ for some $k\geq 1$}.\\
\end{cases}
\]
\end{enumerate}
\end{theorem}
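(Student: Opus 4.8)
\emph{Proof plan.} I would reduce first to prime powers, then classify the prime powers $\ell^{e}$ for which $\Gal(\QQ(E[\ell^{e}])/\QQ)$ can be nilpotent, and finally assemble the global answer from the arithmetic of modular curves. For the reduction: a finite group is nilpotent exactly when it is the direct product of its Sylow subgroups, so quotients and subgroups of nilpotent groups are nilpotent. Writing $n=\prod_{\ell\mid n}\ell^{e_\ell}$ and $G_m:=\Gal(\QQ(E[m])/\QQ)$, the field $\QQ(E[n])$ is the compositum of the $\QQ(E[\ell^{e_\ell}])$, so $G_n$ embeds into $\prod_\ell G_{\ell^{e_\ell}}$ and surjects onto each factor; hence $G_n$ is nilpotent if and only if $G_{\ell^{e_\ell}}$ is nilpotent for every $\ell\mid n$. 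At prime level, $G_\ell\cong\bar\rho_{E,\ell}(\GQ)\le\GL_2(\FF_\ell)$ has surjective determinant by the Weil pairing, and from the Dickson-type description of subgroups of $\GL_2(\FF_\ell)$ one sees that a nilpotent subgroup with surjective determinant is, up to conjugacy, contained in a Cartan subgroup, or — when $\ell$ is a Fermat (resp. Mersenne) prime — in the normalizer of a split (resp. non-split) Cartan: if $\ell\mid|G_\ell|$, the normal Sylow $\ell$-subgroup would be unipotent and would not be centralized by the necessarily nontrivial prime-to-$\ell$ part, while the normalizer of a split Cartan is nilpotent iff $\ell-1$ is a power of $2$ and that of a non-split Cartan iff $\ell+1$ is a power of $2$. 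Combined with the classification of non-CM rational points on the relevant modular curves $X_{\mathrm{sp}}^{+}(\ell)$, $X_{\mathrm{ns}}^{+}(\ell)$ and their sub-Cartan analogues — on the non-split side conditional on Conjecture~\ref{conj:Uniformity} for $\ell>11$ — this restricts the admissible odd primes for non-CM $E$ to $\{3,5,7\}$.

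Next I would rule out higher odd prime powers. If $e\ge 2$ and $G_{\ell^{e}}$ is nilpotent (hence so is $G_\ell$), then the kernel of $G_{\ell^{e}}\to G_\ell$ is a normal $\ell$-subgroup, so it equals the Sylow $\ell$-subgroup and $G_{\ell^{e}}=\Syl_\ell(G_{\ell^{e}})\times H$ with $H$ a Hall $\ell'$-subgroup isomorphic to $G_\ell$ and centralizing $\Syl_\ell(G_{\ell^{e}})$. For odd $\ell$ this forces $H$ into the centralizer in $\GL_2(\ZZ/\ell^{e}\ZZ)$ of its mod-$\ell$ Cartan-normalizer image, which lies in the scalars; hence $\QQ(E[\ell^{e}])^{H}$ is an abelian $\ell$-extension of $\QQ$ unramified outside $\ell$, so it lies in $\QQ(\zeta_{\ell^{e}})$, and since $\QQ(E[\ell^{e}])^{\Syl_\ell(G_{\ell^{e}})}=\QQ(E[\ell])$ we get $\QQ(E[\ell^{e}])=\QQ(E[\ell],\zeta_{\ell^{e}})$, hence also $\QQ(E[\ell^{e}])=\QQ(E[\ell^{e-1}],\zeta_{\ell^{e}})$. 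Theorem~\ref{thm:Main_NearCoin} then forces $\ell=3$, $e=2$, and $E$ on \texttt{9.27.0.1}, which makes $\bar\rho_{E,3}$ surjective for non-CM $E$ — contradicting nilpotency. So for non-CM $E$ the odd part of $n$ divides $3\cdot 5\cdot 7$. The prime $\ell=2$ is treated on its own: the kernel of $\GL_2(\ZZ/2^{k}\ZZ)\to\GL_2(\FF_2)$ is a $2$-group, so any $E$ with $E[2]\subseteq E(\QQ)$ has $G_{2^{k}}$ a $2$-group — hence nilpotent — for all $k$; the $2$-adic images compatible with nilpotency, and the way they do or do not survive at composite level, are pinned down from Theorems~\ref{thm:MainCoin} and \ref{thm:Main_NearCoin} together with the classification in \cite{RSZB} (the relevant mod-$2$ images being \texttt{2.2.0.1}, \texttt{4.16.0.2} and \texttt{4.48.0.3}).

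Then I would assemble the answer. For the non-CM statements (1)--(2), $\QQ(E[n])/\QQ$ is nilpotent for some non-CM curve precisely when a suitable fibre product of modular curves over the $j$-line — encoding the prescribed Cartan-normalizer images at the odd primes, the allowed $2$-adic image, and, crucially, the absence of an obstructing entanglement — has a non-CM rational point; reading off the genera and rational-point data of these curves from \cite{RSZB} shows that the only such $n$ are those in $\{3,5,6,7,15,21\}\cup\{2^{k}\}$, each realized by infinitely many $j$-invariants, and for the remaining $n$ it produces the exact criterion (i)--(ii) of part~(2). For the CM cases (3)--(5) I would replace the $\GL_2(\FF_\ell)$-classification by the explicit description of $\bar\rho_{E,\ell}$ for CM elliptic curves over $\QQ$ (Lozano-Robledo; \cite{RSZB}): for $\ell$ unramified in the CM field $K$ the image is, up to small index, the normalizer of a split or non-split Cartan according as $\ell$ splits or is inert in $K$, so prime-level nilpotency amounts to ``$\ell$ Mersenne and inert'' or ``$\ell$ Fermat and split''; the higher prime-power analysis runs as before (for $D\ne -3$ these curves avoid \texttt{9.27.0.1}), the power-of-$2$ behaviour is controlled by the mod-$2^{k}$ image — nilpotent for $D\in\{-4,-7,-8,-12,-16,-28\}$, not for $D\in\{-11,-19,-43,-67,-163\}$ — and for $D=-27$ one uses that in $\QQ(\sqrt{-3})$ the prime $3$ ramifies while no Mersenne prime is inert and no Fermat prime splits, so that no level can be nilpotent. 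Finally, part~(6) is a direct computation with the classical explicit formula for $\bar\rho_{E_d,p}$, $E_d\colon y^{2}=x^{3}+d$ (depending on $d$ modulo cubes and on $p$ modulo $3$ and small powers of $2$): matching the image against the Cartan-normalizer-nilpotent criterion yields exactly the four displayed congruences on $d$, and one checks directly that no composite level is nilpotent for these curves.

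The hard part will be this last assembly step: passing from ``each prime-power division field can be made nilpotent'' to the precise list requires controlling the non-CM rational points on a whole family of fibre products of modular curves — in particular excluding three or more odd prime factors (conditionally on Conjecture~\ref{conj:Uniformity} for primes $>11$), detecting the entanglements that kill combinations such as $2^{2}\cdot 3$, and doing the $\ell=2$ bookkeeping — which is where the bulk of the modular-curve work, and the essential use of the near-coincidence Theorem~\ref{thm:Main_NearCoin}, is concentrated.
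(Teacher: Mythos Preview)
Your overall strategy matches the paper's almost exactly: reduce to prime powers via the Sylow characterization of nilpotency; at prime level use the Dickson classification to land in Cartan normalizers and invoke Bilu--Parent--Rebolledo plus Conjecture~\ref{conj:Uniformity}; at odd prime-power level force a near coincidence and apply Theorem~\ref{thm:Main_NearCoin}; for CM curves use the explicit image description; and at composite level take fibre products over the $j$-line. Two points, however, are genuine gaps rather than mere sketches.

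First, the prime-power step. Your sentence ``this forces $H$ into the centralizer \dots which lies in the scalars; hence $\QQ(E[\ell^{e}])^{H}$ is an abelian $\ell$-extension of $\QQ$ unramified outside $\ell$'' is garbled in two ways: it is $\Syl_\ell=\ker\pi$, not $H$, that must lie in the scalars (since $\ker\pi$ is centralized by the non-abelian $H$ and one uses that the centralizer of a cyclic matrix is $\FF_p[X]$); and the ``unramified outside $\ell$'' claim is not justified and not needed --- once $\ker\pi$ is scalar, $\ker\pi\cap\SL_2=\{I\}$ for odd $\ell$ and the near coincidence follows immediately. You also omit the case where $G_{\ell^e}$ is abelian (so $\ker\pi$ need not be scalar); the paper disposes of that case by citing the classification of abelian division fields \cite{AbelDivFields}. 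Separately, for $p=2$ the correct hypothesis is a single rational point of order $2$ (mod $2$ image inside a Borel), not $E[2]\subseteq E(\QQ)$.

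Second, and more seriously, the composite-level step cannot be completed ``by reading off the genera and rational-point data from \cite{RSZB}''. The fibre product with label \texttt{35.315.19.1} (combining $C_s^+(5)$ and $C_{ns}^+(7)$) has genus $19$ and Jacobian rank $15$, so Chabauty in \cite{RSZB} does not resolve it; the paper instead invokes Lemos's formal-immersion theorem \cite{LemosSomeCases} (and notes that its proof works for $p>5$, not just $p>37$). Likewise, the unconditional statement in part~(2) requires Lemos's isogeny-version \cite{Lemos2} to rule out a rational $2$-torsion point together with non-split Cartan image at a Mersenne prime $p\le 37$ (with a separate check at $p=31$). Without these inputs your plan does not close, for part~(1) or part~(2).
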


One corollary of this result is the following.

\begin{corollary}
\label{cor:no19}
There is no elliptic curve $E/\Q$ for which $\Q(E[19])/\Q$ is a nilpotent extension. Further, $19$ is the smallest prime with this property.
\end{corollary}

Along the way to proving Theorem~\ref{thm:main_Nilp}, a phenomenon appears that we point out here. When studying the question of when $\QQ(E[p^n])/\QQ$ nilpotent given that $\QQ(E[p^{n-1}])/\QQ$ is, we realized that we would need to classify exactly when $\QQ(E[p^{n}]) = \QQ(E[p^{n-1}],\zeta_{p^{n}})$. 

\begin{definition}
Let $E/\QQ$ be an elliptic curve and let $m,n\in \ZZ^+$ such that $m\mid n$. We say that there is a {\it{\bfseries near coincidence}} between the $m$- and $n$-division fields of $E$ if 
$$\QQ(E[n]) = \QQ(E[m],\zeta_n).$$
\end{definition}

The terminology here is motivated by the definition given in \cite{Coincidences} where an elliptic curve $E/\QQ$ is said to have a {\it{\bfseries coincidence}} between its division fields if $\QQ(E[m]) = \QQ(E[n])$ for distinct integers $m$ and $n$. In that paper is the following theorem:

\begin{theorem}\label{thm:MainCoin}{\rm \cite[Theorem 1.4]{Coincidences}}
Let $E/\QQ$ be an elliptic curve, $p$ be a prime, and let $n\in\ZZ^+.$
\begin{enumerate}
\item Suppose $\QQ(E[ p^{n+1} ]) = \QQ(E[p^{n}]).$ Then $p=2$ and $n=1.$
\item If $\QQ(E[p^n])\cap \QQ(\zeta_{p^{n+1}}) = \QQ(\zeta_{p^{n+1}}),$ then $p=2$.
\end{enumerate}
\end{theorem}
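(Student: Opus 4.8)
The plan is to pass to the images of the mod $p^k$ Galois representations $\rho_{E,p^k}\colon G_{\QQ}\to\GL_2(\ZZ/p^k\ZZ)$ and use that $\det\circ\rho_{E,p^k}$ is the mod $p^k$ cyclotomic character, hence surjective onto $(\ZZ/p^k\ZZ)^\times$ (equivalently $\QQ(\zeta_{p^k})\subseteq\QQ(E[p^k])$). The hypothesis of part~(1), $\QQ(E[p^{n+1}])=\QQ(E[p^n])$, is equivalent to the statement that reduction mod $p^n$ restricts to an isomorphism $\rho_{E,p^{n+1}}(G_{\QQ})\xrightarrow{\ \sim\ }\rho_{E,p^n}(G_{\QQ})$, i.e. that $\rho_{E,p^{n+1}}(G_{\QQ})$ meets the kernel $1+p^nM_2(\FF_p)$ of reduction trivially. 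In particular it forces $\QQ(\zeta_{p^{n+1}})\subseteq\QQ(E[p^n])$, so the assertion ``$p=2$'' in part~(1) is a consequence of part~(2). I would therefore prove part~(2) first and then deal with the remaining claim $n=1$ in part~(1).

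For part~(2), suppose $p$ is odd and $\QQ(\zeta_{p^{n+1}})\subseteq\QQ(E[p^n])$; I want a contradiction. Set $G=\Gal(\QQ(E[p^n])/\QQ)\hookrightarrow\GL_2(\ZZ/p^n\ZZ)$, so $\det(G)=(\ZZ/p^n\ZZ)^\times$. Since $G$ surjects onto $\Gal(\QQ(\zeta_{p^{n+1}})/\QQ)\cong(\ZZ/p^{n+1}\ZZ)^\times$, which has $\ZZ/p^n\ZZ$ as a quotient for odd $p$, we obtain a surjection $f\colon G\twoheadrightarrow\ZZ/p^n\ZZ$. Pick $g\in G$ with $f(g)$ of order $p^n$; replacing $g$ by its $p$-primary part (the complementary factor has prime-to-$p$ order, so maps to $0$) we may assume $g$ has order exactly $p^n$. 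Element orders in $\GL_2(\ZZ/p^n\ZZ)$ have $p$-part at most $p^n$, with equality forcing $g\not\equiv I\pmod p$ because $1+pM_2(\ZZ/p^n\ZZ)$ has exponent $p^{n-1}$ for odd $p$; hence $g\bmod p$ is a nontrivial unipotent. Writing $G_1=G\cap(1+pM_2(\ZZ/p^n\ZZ))$, we have $g^p\in G_1$ with $f(g^p)$ of order $p^{n-1}$, while $f(G_1)$ has exponent dividing $p^{n-1}$; thus $f(G_1)=p\ZZ/p^n\ZZ$ and $f$ induces a surjection $\bar f\colon \overline{G}\twoheadrightarrow\ZZ/p\ZZ$, where $\overline{G}=G/G_1$ is the mod $p$ image and still has surjective determinant. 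So everything reduces to the following statement: \emph{for $p$ odd, no subgroup of $\GL_2(\FF_p)$ with surjective determinant has a surjection onto $\ZZ/p\ZZ$.}

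To prove this, let $H\le\GL_2(\FF_p)$ with $\det(H)=\FF_p^\times$ and suppose $\varphi\colon H\twoheadrightarrow\ZZ/p\ZZ$. Then $p\mid|H|$, and since $|\GL_2(\FF_p)|$ is exactly divisible by $p$, a Sylow $p$-subgroup of $H$ is generated by a transvection $u$; a counting argument (if $\varphi(u)=0$ then $\ker\varphi$ contains a Sylow $p$-subgroup and $p^2\mid|H|$) gives $\varphi(u)\ne0$. Choose $h\in H$ with $\det(h)$ a generator of $\FF_p^\times$; as the centralizer of a regular unipotent consists of matrices of square determinant, $h$ does not centralize $u$, so $v=huh^{-1}$ is a transvection with $v\ne u$, $\varphi(v)=\varphi(u)\ne0$ and $vu^{-1}=[h,u]\in[H,H]\subseteq\ker\varphi$. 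If $u$ and $v$ share the same axis then $v=u^t$ with $t\not\equiv0,1$, whence $u\in\langle vu^{-1}\rangle\subseteq\ker\varphi$, contradicting $\varphi(u)\ne0$. If their axes differ then (by Dickson's classification, using that two transvections with distinct axes lie in no common Borel) $\langle u,v\rangle=\SL_2(\FF_p)$; for $p\ge5$ this group is perfect, so $u\in\SL_2(\FF_p)\subseteq[H,H]\subseteq\ker\varphi$, again a contradiction, while for $p=3$ one has $H\supseteq\langle u,v,h\rangle=\GL_2(\FF_3)$, so $H=\GL_2(\FF_3)$, whose abelianization is $\ZZ/2\ZZ$, contradicting $\varphi\colon H\twoheadrightarrow\ZZ/3\ZZ$. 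This finishes part~(2), and hence the assertion $p=2$ in part~(1); note that the argument applies uniformly to all $E$, with no CM or Serre-type input.

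It remains to show that when $p=2$ and $\QQ(E[2^{n+1}])=\QQ(E[2^n])$ one has $n=1$. Here the preceding lemma is false, so I would instead exploit the full isomorphism $\rho_{E,2^{n+1}}(G_{\QQ})\xrightarrow{\sim}\rho_{E,2^n}(G_{\QQ})$ through a propagation/Lie-theoretic argument: with $\widehat{G}=\rho_{E,2^\infty}(G_{\QQ})$ and $\widehat{G}_k=\widehat{G}\cap(1+2^kM_2(\ZZ_2))$, the hypothesis says $\widehat{G}_n=\widehat{G}_{n+1}$, and once $n\ge2$ (so that $1+2^nM_2(\ZZ_2)$ is a uniform pro-$2$ group) standard $p$-adic Lie theory forces $\widehat{G}_k=\widehat{G}_{k+1}$ for all $k\ge n$, i.e. $\widehat{G}_n=\{I\}$; then $\widehat{G}$ is finite, contradicting the surjectivity of $\det\circ\rho_{E,2^\infty}$ onto $\ZZ_2^\times$. (For $n=1$ the group $1+2M_2(\ZZ_2)$ is not uniform, which is exactly why the near-coincidence can and does occur at that level.) The step I expect to be the main obstacle is precisely this $p=2$ analysis: the clean odd-prime group theory does not transfer, and one must handle the failure of uniformity at level $2$ carefully — in practice this is where one either invokes, or reproves in the needed generality, the classification of $2$-adic images of Galois for elliptic curves over $\QQ$.
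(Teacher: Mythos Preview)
The paper does not prove this theorem; it is quoted from \cite{Coincidences} and used as input. So there is no ``paper's own proof'' to compare against. That said, let me assess your argument on its merits.

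Your proof of part~(2) is correct and nicely self-contained. The reduction to the statement ``no subgroup of $\GL_2(\FF_p)$ with surjective determinant surjects onto $\ZZ/p\ZZ$ for $p$ odd'' is clean, and your case analysis (Borel versus $\SL_2$, with $p=3$ handled separately via the abelianization of $\GL_2(\FF_3)$) is sound. The deduction of $p=2$ in part~(1) from part~(2) is also fine.

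The gap is in your treatment of $n=1$ when $p=2$. Your claim that ``once $n\ge 2$, standard $p$-adic Lie theory forces $\widehat{G}_k=\widehat{G}_{k+1}$ for all $k\ge n$'' is false as stated. Uniformity of $1+2^nM_2(\ZZ_2)$ does not propagate to an arbitrary closed subgroup, and the filtration $\widehat{G}_k=\widehat{G}\cap(1+2^kM_2(\ZZ_2))$ need not be the lower $2$-series of anything. Concretely, take $\widehat{G}=\{I+8tX:t\in\ZZ_2\}$ with $X=\left(\begin{smallmatrix}0&1\\0&0\end{smallmatrix}\right)$: then $\widehat{G}_2=\widehat{G}_3=\widehat{G}$ but $\widehat{G}_3\ne\widehat{G}_4$. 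Of course this $\widehat{G}$ does not have surjective determinant, but the point is that the abstract Lie-theoretic mechanism you invoke does not furnish the forward propagation you need.

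What \emph{is} true (and is essentially the content of Corollary~\ref{cor:Near_Inductive_p_even} here, read contrapositively) is propagation in the \emph{other} direction: for $n\ge 2$, the squaring map induces an injection $\widehat{G}_n/\widehat{G}_{n+1}\hookrightarrow\widehat{G}_{n+1}/\widehat{G}_{n+2}$, so $[\QQ(E[2^{m+1}]):\QQ(E[2^m])]$ is non-decreasing for $m\ge 2$. Hence $\QQ(E[2^{n+1}])=\QQ(E[2^n])$ for some $n\ge 2$ forces $\QQ(E[8])=\QQ(E[4])$, and it remains to rule out that single case. This is a finite check against the subgroups of $\GL_2(\ZZ/8\ZZ)$ with surjective determinant (or, as you anticipate in your final sentence, an appeal to the Rouse--Zureick-Brown classification); it does not follow from the Lie-theoretic sentence you wrote.
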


The problem of classifying such coincidences for elliptic curves defined over number fields was also recently investigated by Yvon in \cite{CoinOverNumberFields}. 

In the end, we are able to give a complete classification of elliptic curves over $\QQ$ with a near coincidence of their $p^{n+1}$ and $p^n$ division fields; i.e., when $\QQ(E[p^{n+1}])= \QQ(E[p^n],\zeta_p^{n+1}).$

\begin{thm}\label{thm:Main_NearCoin}
Let $E/\QQ$ be an elliptic curve, $p\in \ZZ$ prime, and $n\in \ZZ^+.$ Suppose that $$\QQ(E[p^{n+1}]) = \QQ(E[p^n],\zeta_{p^{n+1}}).$$
Then $p \in \{ 2, 3 \}$ and $n = 1$. Further, if $p=2$, then $E$ must correspond to a rational point on the one of the modular curves with RSZB label \texttt{4.48.0.3} or \texttt{4.16.0.2}, while if $p=3$, then $E$ must come from a rational point on \texttt{9.27.0.1}.
\end{thm}

\begin{remark}
Before moving on, we point out that elliptic curves corresponding to rational points on \texttt{4.16.0.2} have the property that $\QQ(E[4]) = \QQ(E[2]) = \QQ(E[2],i).$ Thus, these curves actually have a coincidence of division fields. In contrast, the elliptic curves corresponding to rational points on \texttt{4.48.0.3} have the property that $\QQ(E[2]) = \QQ$ and $\QQ(E[4]) = \QQ(i)$. So these curves have a near coincidence without having a coincidence of division fields. It is interesting to note that in order to have a near coincidence between the $2$- and $4$-division fields, without having an actual coincidence between the $2$- and $4$-division fields, the $2$-division field has to be trivial.

Similarly, the rational points on \texttt{9.27.0.1} yield elliptic curves with a near coincidence between their 3- and 9-division fields. Unlike the case with $p = 2$, a rational point on \texttt{9.27.0.1} corresponds to an elliptic curve $E$
with either $j(E) = 0$ or a surjective $\bar\rho_{E,3}$. In particular,
if $E$ is a non-CM elliptic curve with $\QQ(E[9]) = \QQ(E[3],\zeta_{9})$, we must
have $[\QQ(E[3]) : \QQ] = 48$.
\end{remark}

\begin{remark} We choose to begin this paper with the Gauss--Wantzel theorem not only because of its classical significance, but because it serves as a natural entry point for a broad audience into the kinds of questions that we hope to address in this paper. The main objective here is to classify when the division field $\QQ(E[n])/\QQ$ of an elliptic curve $E$ over $\QQ$ is nilpotent. Along the way, we uncover a surprising analog of the Gauss--Wantzel theorem as well as a result about near coincidences of division fields. We hope that the Gauss--Wantzel analog, while not the main thrust of the paper, illustrates the value of considering questions like these.
\end{remark}

\subsection{Outline of the paper} In Section \ref{sec:Background} we recall basic facts related to elliptic and modular curves that will be necessary for the proof of the main results. The proof of Theorem \ref{thm:Main_NearCoin} will be handled in Section~\ref{sec:near} by finding the smallest power $n$ such that $\QQ(E[p^{n+1}])$ cannot be $\QQ(E[p^n],\zeta_{p^{n+1}})$ and then we prove that if $\QQ(E[p^{n+1}])\neq\QQ(E[p^n],\zeta_{p^{n+1}})$, then $\QQ(E[p^{n+2}])\neq\QQ(E[p^{n+1}],\zeta_{p^{n+2}})$. The proof will have to be broken down into cases depending on if $p=2, 3, 5,$ or $p\geq 7.$

The proof of Theorem \ref{thm:main_Nilp} starts in Section \ref{sec:PrimeLevel} by using group theory to classify the nilpotent subgroups of $\GL_2(\ZZ/p\ZZ)$ according to their image in $\PGL_2(\ZZ/p\ZZ)$. 
We then apply known results about the corresponding modular curves to determine when the extension $\QQ(E[p])/\QQ$ can be nilpotent. This is where we will employ Conjecture \ref{conj:Uniformity}.

In Section \ref{sec:PrimePowerLevel} we use the information from the previous section to prove that if $p$ is odd, then the $p^2$-division field is never nilpotent. Lastly, in Section \ref{sec:CompositeLevel} we study which combinations of mod $p$ images can occur simultaneously, and prove Theorem~\ref{thm:main_Nilp}. 
Throughout the paper we address the case of elliptic curves with complex multiplication separately from
those without complex multiplication because of their unique properties (which are outlined in Section \ref{subsec:CMEllipticCurves}).

All of the computations in this paper were performed using Magma \cite{Magma} and the code can be found at \cite{code}.

\subsection{Acknowledgements} We would like to thank David Zureick-Brown and \'Alvaro Lozano-Robledo for insightful conversations during the writing of this paper, and we would like to thank Pedro Lemos for helpful communications about the results in \cite{LemosSomeCases}. We would also like to thank the anonymous referees whose careful reading has helped improve the clarity and accuracy of the paper.

\section{Background}\label{sec:Background}
The goal of this section is to review some of the background information necessary for the proofs of the main theorems.
In each subsection readers should find some additional resources to supplement what is written here.

\subsection{Elliptic Curves}
For background about elliptic curves, see \cite{Silv1}. 
Given an elliptic curve $E/\QQ$ and a natural number $n$, the points of order dividing $n$ defined over $\overline{\QQ}$ form a group. Considering $E(\CC)$ as the quotient of $\CC$ by a lattice shows that
$$E[n] := \{P\in E(\overline{\QQ}) \colon nP =\mathcal{O} \}\simeq \ZZ/n\ZZ\oplus\ZZ/n\ZZ.$$
This isomorphism is non-canonical, but it only requires a choice of basis for $E[n]$. 

Because the group law on an elliptic curve is given by rational functions, $\Gal(\overline{\QQ}/\QQ)$ acts on $E[n]$ component-wise. 
That is, if $P = (x,y)\in E[n]$ and $\sigma\in \Gal(\overline{\QQ}/\QQ)$, then $P^\sigma = (\sigma(x),\sigma(y)) \in E[n]$.
This component-wise action induces a representation 
\[\
\bar\rho_{E,n}\colon \Gal(\overline{\QQ}/\QQ)\to \GL_2(\ZZ/n\ZZ)
\]
with the property that $\Im\bar\rho_{E,n} \simeq \Gal(\QQ(E[n])/\QQ).$ 
We remark here that because the isomorphism $E[n] \simeq \ZZ/n\ZZ\oplus\ZZ/n\ZZ$ is non-canonical, $\Im\bar\rho_{E,n}$ is really only defined up to conjugation. 

A guiding principle in this paper is that oftentimes things can be broken down into cases depending on the shape of $\Im\bar\rho_{E,p}.$ We are able to do this thanks to the following proposition. 


\begin{proposition}{\rm \cite[Lemma 2]{Swinnerton-Dyer}}\label{prop:ProjIm}
	Let $p$ be a prime and let $G$ be a subgroup of $\GL_2(\ZZ/p\ZZ)$. 
	If $p$ divides $|G|$ then, either $\SL_2(\ZZ/p\ZZ)\subseteq G,$ or $G$ is contained inside a Borel subgroup of $\GL_2(\ZZ/p\ZZ)$. If $p$ does not divide $|G|$, let $H$ be the image of $G$ in $\PGL_2(\ZZ/p\ZZ)$; then
	\begin{enumerate}
		\item[\rm (1)] $H$ is cyclic and $G$ is contained inside a Cartan subgroup of $\GL_2(\ZZ/p\ZZ)$, or
		\item[\rm (2)] $H$ is dihedral and $G$ is contained in the normalizer of a Cartan subgroup of $\GL_2(\ZZ/p\ZZ)$, but not the Cartan itself, or
		\item[\rm (3)] $H$ is isomorphic to $A_4$, $S_4$ and $A_5$.
	\end{enumerate}
In case {\rm (2)}, $p$ must be odd. In case {\rm (3)}, $p$ must be relatively prime to 6, 6, and 30 respectively. 
\end{proposition}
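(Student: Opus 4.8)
The plan is to classify subgroups $G$ of $\GL_2(\ZZ/p\ZZ)$ by first passing to the projective image $H = $ image of $G$ in $\PGL_2(\ZZ/p\ZZ)$ and invoking the classical classification of finite subgroups of $\PGL_2(\FF_p)$. First I would dispose of the case $p \mid |G|$: if a $p$-Sylow subgroup of $G$ is nontrivial, it is generated by a transvection (conjugate to $\smallmat{1}{1}{0}{1}$, since the $p$-Sylow of $\GL_2(\FF_p)$ has order $p$). Then I would analyze the normalizer structure: either $G$ contains two transvections fixing distinct lines, in which case a short computation shows $G \supseteq \SL_2(\ZZ/p\ZZ)$ (the two unipotents generate $\SL_2$), or all transvections in $G$ share a common fixed line, forcing $G$ to normalize that line and hence lie in a Borel subgroup.

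For the case $p \nmid |G|$, the key input is the classification of finite subgroups of $\PGL_2(\FF_p)$ (equivalently, of $\PGL_2(\overline{\FF_p})$ of order prime to $p$): every such subgroup is cyclic, dihedral, or isomorphic to $A_4$, $S_4$, or $A_5$. I would then reverse-engineer what each possibility says about $G$ itself via the exact sequence $1 \to (\ZZ/p\ZZ)^\times \to \GL_2(\FF_p) \to \PGL_2(\FF_p) \to 1$ restricted to $G$. If $H$ is cyclic, then $G$ is abelian (an extension of a cyclic group by the central $(\ZZ/p\ZZ)^\times$, and one checks any lift is abelian), and an abelian subgroup of $\GL_2(\FF_p)$ of order prime to $p$ is diagonalizable over $\FF_{p^2}$, hence contained in a Cartan (split or non-split). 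If $H$ is dihedral, then $G$ contains an index-two subgroup $G_0$ with cyclic projective image, so $G_0$ lies in a Cartan $C$; an element of $G \setminus G_0$ acts on $C$ by the order-two automorphism, so $G$ normalizes $C$ but (since $H$ is strictly dihedral, not cyclic) $G \not\subseteq C$. The $A_4, S_4, A_5$ cases are simply recorded.

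Finally I would verify the arithmetic constraints. For case (2): the normalizer of a Cartan strictly contains the Cartan, and the quotient has order $2$; this automorphism is nontrivial only when $p$ is odd (for $p = 2$, $\GL_2(\FF_2) = S_3$ has no such configuration — more concretely one checks $\PGL_2(\FF_2)$ has no faithful dihedral-of-order-$\geq 4$ subgroup that is not already realized, but the cleanest statement is that the Cartan normalizer quotient is trivial when the Cartan has order coprime issues fail). For case (3): $A_4, S_4, A_5$ have orders $12, 24, 60$, and embeddability of such a group (or its double cover) in $\GL_2(\FF_p)$ with $p \nmid |H|$ forces $p \nmid 6$, $p \nmid 6$, $p \nmid 30$ respectively; indeed $S_4 \supset A_4$ so both need $p \neq 2, 3$, and $A_5$ additionally needs $p \neq 5$.

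The main obstacle I expect is the case $p \mid |G|$ — specifically making precise and clean the dichotomy "either $G$ contains enough unipotents to generate $\SL_2$, or it normalizes a line." The rest leans on the classical $\PGL_2(\FF_p)$ subgroup classification, which I would cite rather than reprove; the arithmetic restrictions in the last sentence are then routine order and double-cover checks.
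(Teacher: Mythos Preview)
The paper does not prove this proposition; it is quoted verbatim from Swinnerton-Dyer with a citation and used as a black box throughout. So there is no in-paper proof to compare against.

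Your sketch is the standard argument and is essentially correct. Two places deserve tightening. In the dihedral case you pass from ``$G_0$ lies in a Cartan $C$'' to ``an element of $G\setminus G_0$ normalizes $C$'' without justification; the missing step is that once $H_0$ is nontrivial, $G_0$ contains a non-scalar element $x$, the Cartan $C$ is then \emph{uniquely} determined as $C_{\GL_2(\FF_p)}(x)$, and conjugation by $g\in G\setminus G_0$ sends $x$ to another non-scalar element of $G_0\subseteq C$, forcing $gCg^{-1}=C$. Your explanation of why case~(2) requires $p$ odd is garbled; the clean reason is simply that under the standing hypothesis $p\nmid |G|$, for $p=2$ the group $G$ has odd order, hence so does $H$, and a dihedral group has even order. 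The constraints in case~(3) follow for the same trivial reason: $|H|$ divides $|G|$, so $p\nmid|G|$ forces $p\nmid |A_4|=12$, $p\nmid|S_4|=24$, $p\nmid|A_5|=60$. No double-cover analysis is needed.
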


We will say more about the Cartan subgroups when we discuss elliptic curves with complex multiplication in Section \ref{subsec:CMEllipticCurves}. 

Before moving on from Galois representations attached to elliptic curves, we draw attention to the fact that we can combine mod $p^k$ representations using inverse limits to define the $p$-adic Galois representations 

\[
\rho_{E,p^\infty}\colon \Gal(\overline\QQ/\QQ) \to \GL_2(\ZZ_p).
\]

An important point, for our purposes, is that if $p\geq 5$, then the group $\SL_2(\ZZ_p)$ has no proper closed subgroups whose image is $\SL_2(\ZZ/p\ZZ)$ under the standard reduction map. As Serre explains, this leads to the following proposition.

\begin{prop}\label{prop:SurjModp}{\rm \cite[Section IV]{SerreAbelReps}}
Let $E/\QQ$ be an elliptic curve and let $p\geq 5$ be a prime. If $\bar\rho_{E,p}$ is surjective, then $\rho_{E,p^{\infty}}$ is also surjective. 
\end{prop}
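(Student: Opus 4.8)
The plan is to deduce the statement from the following purely group-theoretic fact, which is the key input from Serre: for a prime $p \geq 5$, the group $\SL_2(\ZZ_p)$ has no proper closed subgroup that surjects onto $\SL_2(\ZZ/p\ZZ)$ under the reduction map $\SL_2(\ZZ_p) \to \SL_2(\ZZ/p\ZZ)$. Granting this, the argument is a short chase through the reduction maps. First I would set $G = \im \rho_{E,p^\infty} \subseteq \GL_2(\ZZ_p)$, a closed subgroup since $\Gal(\overline{\QQ}/\QQ)$ is compact and $\rho_{E,p^\infty}$ is continuous, and let $G_1 = G \cap \SL_2(\ZZ_p)$, which is closed in $\SL_2(\ZZ_p)$. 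The goal is to show $G = \GL_2(\ZZ_p)$, and the natural way to organize this is to show simultaneously that $G_1 = \SL_2(\ZZ_p)$ and that $\det(G) = \ZZ_p^\times$.

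The surjectivity of the determinant is the easy half: the determinant of $\rho_{E,p^\infty}$ is the $p$-adic cyclotomic character, which is surjective onto $\ZZ_p^\times$ because $\QQ$ has no nontrivial subextensions of the $p$-power cyclotomic tower killed by Galois, i.e. $[\QQ(\zeta_{p^k}):\QQ] = \varphi(p^k)$ for all $k$. For the $\SL_2$ part, I would argue that the image of $G_1$ in $\SL_2(\ZZ/p\ZZ)$ is all of $\SL_2(\ZZ/p\ZZ)$: since $\bar\rho_{E,p}$ is surjective onto $\GL_2(\ZZ/p\ZZ)$ by hypothesis, the image of $G$ mod $p$ is $\GL_2(\ZZ/p\ZZ)$, hence the image of $G_1 = G \cap \SL_2(\ZZ_p)$ mod $p$ contains the commutator subgroup $\SL_2(\ZZ/p\ZZ)$ (using that $\SL_2(\ZZ/p\ZZ) = [\GL_2(\ZZ/p\ZZ), \GL_2(\ZZ/p\ZZ)]$ for $p \geq 5$, or more directly that any element of $\GL_2(\ZZ/p\ZZ)$ of determinant $1$ lifts to $G \cap \SL_2(\ZZ_p)$ because one can correct the determinant using an element of $\SL_2(\ZZ_p) \cap G$ already present). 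Then $G_1$ is a closed subgroup of $\SL_2(\ZZ_p)$ surjecting onto $\SL_2(\ZZ/p\ZZ)$, so by Serre's group-theoretic fact $G_1 = \SL_2(\ZZ_p)$. Finally, $G \supseteq \SL_2(\ZZ_p)$ together with $\det(G) = \ZZ_p^\times$ forces $G = \GL_2(\ZZ_p)$, since the map $\GL_2(\ZZ_p) \to \ZZ_p^\times$ has kernel $\SL_2(\ZZ_p)$.

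The main obstacle is the invocation and correct application of Serre's theorem on closed subgroups of $\SL_2(\ZZ_p)$ — this is the substantive content and I would simply cite \cite{SerreAbelReps} rather than reprove it; its proof relies on the fact that for $p \geq 5$ the group $\SL_2(\ZZ/p\ZZ)$ is perfect and its universal central extension has no $p$-part obstruction, which is exactly why $p = 2, 3$ are excluded. The only mild subtlety to be careful about on our side is the bookkeeping in the paragraph above: ensuring that surjectivity of $G$ mod $p$ really does force $G \cap \SL_2(\ZZ_p)$ to be full mod $p$, which is immediate from the snake-lemma-type diagram comparing the determinant sequences over $\ZZ_p$ and over $\ZZ/p\ZZ$ — the snake connecting map vanishes because $\ZZ_p^\times \to (\ZZ/p\ZZ)^\times$ is surjective with the relevant lifts available. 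Everything else is formal.
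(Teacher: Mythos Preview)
Your proposal is correct and follows exactly the approach the paper indicates: the paper does not give a proof but simply cites Serre, noting in the sentence preceding the proposition that the key input is that for $p \geq 5$ the group $\SL_2(\ZZ_p)$ has no proper closed subgroup surjecting onto $\SL_2(\ZZ/p\ZZ)$. Your write-up supplies the standard details around this fact (surjectivity of the cyclotomic character for $\det$, and the commutator argument to see that $G \cap \SL_2(\ZZ_p)$ surjects onto $\SL_2(\ZZ/p\ZZ)$), which is precisely what ``as Serre explains'' is pointing to; the only cosmetic wrinkle is the parenthetical about ``correcting the determinant using an element of $\SL_2(\ZZ_p) \cap G$,'' which is phrased awkwardly, but your commutator and snake-lemma remarks already handle this cleanly.
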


To see how this breaks down when $p=2$ or $3,$ the reader is encouraged to see \cite{Dokchitser2^n} and \cite{ElkiesMod3}.

Lastly, we note that given an elliptic curve over $\QQ,$ the group $\Im\bar\rho_{E,n}$ must have a few special properties. 

\begin{definition}
	Let $n\geq 2$ be a positive integer and let $G$ be a subgroup of $\GL_2(\ZZ/n\ZZ)$. 
	We say that $G$ is an {\bfseries admissible} group if 
	\begin{itemize}
		\item $\det(G) =(\ZZ/n\ZZ)^\times$, and 
		\item $G$ contains an element of determinant $-1$ and trace $0$ that fixes a point of order $n$ inside of $(\ZZ/n\ZZ)^2.$
	\end{itemize}
\end{definition}

\begin{proposition}{\rm \cite[Proposition 2.2]{Zyw}}
\label{prop:admissible}
	Let $n\geq 2$ be an integer and let $E/\QQ$ be an elliptic curve. 
	Then, $\Im\bar\rho_{E,n}$ is an admissible subgroup of $\GL_2(\ZZ/n\ZZ)$.
\end{proposition}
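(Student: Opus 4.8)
The plan is to verify the two defining conditions of admissibility separately: for the determinant condition I would use the Weil pairing, and for the existence of the distinguished involution I would use complex conjugation together with the real-analytic structure of $E(\RR)$.

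First I would recall the non-degenerate, alternating, Galois-equivariant Weil pairing $e_n \colon E[n] \times E[n] \to \mu_n$. For any $\sigma \in \Gal(\overline{\QQ}/\QQ)$ and $P,Q \in E[n]$ we have $\sigma\bigl(e_n(P,Q)\bigr) = e_n(\sigma P, \sigma Q)$; since $\sigma$ acts on $\mu_n$ through the mod $n$ cyclotomic character $\chi_n$, the left-hand side is $e_n(P,Q)^{\chi_n(\sigma)}$, while writing $\sigma$ as the matrix $\bar\rho_{E,n}(\sigma)$ in a fixed basis makes the right-hand side $e_n(P,Q)^{\det\bar\rho_{E,n}(\sigma)}$ by bilinearity and alternation. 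Non-degeneracy forces $\det\circ\,\bar\rho_{E,n} = \chi_n$. Since $\Gal(\QQ(\zeta_n)/\QQ) \cong (\ZZ/n\ZZ)^{\times}$ via $\chi_n$, the cyclotomic character is surjective, so $\det\bigl(\Im\bar\rho_{E,n}\bigr) = (\ZZ/n\ZZ)^{\times}$, which is the first condition.

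Next I would fix an embedding $\overline{\QQ} \hookrightarrow \CC$, let $c$ be the induced complex conjugation in $\Gal(\overline{\QQ}/\QQ)$, and set $M := \bar\rho_{E,n}(c) \in \GL_2(\ZZ/n\ZZ)$. From $c^2 = 1$ we get $M^2 = I$, and from $c(\zeta_n) = \zeta_n^{-1}$ together with the previous paragraph we get $\det M = \chi_n(c) = -1$. Feeding $\det M = -1$ and $M^2 = I$ into the Cayley--Hamilton relation $M^2 - (\Tr M)M + (\det M)I = 0$ gives $(\Tr M)M = 0$, and since $M$ is invertible, $\Tr M = 0$. To finish I would produce a point of exact order $n$ fixed by $c$: the identity component $E(\RR)^{\circ}$ is a real Lie group isomorphic to $\RR/\ZZ$, hence contains a cyclic subgroup of order $n$, and any generator $P$ of that subgroup is a point of exact order $n$ with real (thus $c$-fixed) coordinates. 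Its coordinate vector in $(\ZZ/n\ZZ)^2$ then has order $n$ and is fixed by $M$, giving the second condition and completing the proof.

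The one genuinely delicate point is ensuring that the $M$-fixed point has order exactly $n$, not merely a proper divisor: that $M$ has eigenvalue $1$ follows at once from $\Tr M = 0$ and $\det M = -1$, but at even levels the Chinese Remainder decomposition could a priori confine the fixed line to the prime-to-$2$ part, so the topological input $E(\RR)^{\circ} \cong \RR/\ZZ$ is what actually pins down a fixed vector of full order $n$. Everything else is a formal consequence of the identity $\det\circ\,\bar\rho_{E,n} = \chi_n$ and the relation $c^2 = 1$.
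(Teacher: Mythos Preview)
The paper does not give its own proof of this proposition; it simply quotes it as \cite[Proposition 2.2]{Zyw}. Your argument is correct and is exactly the standard proof (and the one Zywina gives): surjectivity of the determinant via the Weil pairing identity $\det\circ\,\bar\rho_{E,n}=\chi_n$, and the distinguished involution coming from complex conjugation together with the existence of a real $n$-torsion point in $E(\RR)^{\circ}\cong\RR/\ZZ$.
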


\subsubsection{Elliptic curves with complex multiplication}\label{subsec:CMEllipticCurves}

Elliptic curves come in two distinct types depending on their endomorphism rings. 
Given an elliptic curve $E$, defined over a field of characteristic 0, the endomorphism ring of $E$ over $\overline{\QQ}$ is either isomorphic to $\ZZ$ or an order of an imaginary quadratic field, usually denoted by $\mathcal{O}$. 
When the endomorphism ring is larger than $\ZZ$ we say that $E$ has {\it \bfseries complex multiplication} by $\mathcal{O}$. 
Throughout this section we follow the work done in \cite[Section 12]{RSZB}. 
Many of the results we use were first proven in \cite{ALR-CMImages}. A reader looking for an introduction to elliptic curves with complex multiplication should see \cite[Chapter II]{Silv2}.

One way to think about elliptic curves with complex multiplication is as elliptic curves with additional symmetries. 
These added symmetries manifest themselves in many ways. 
They endow elliptic curves with complex multiplication with many interesting properties that make them unique among elliptic curves in general. 
Of particular interest to us is that the Galois representations attached to elliptic curves with complex multiplication behave very differently than those without complex multiplication. 

We introduce some notation to state results about the Galois representations attached to elliptic curves with complex multiplication.

Given $\mathcal{O}$, an order of a quadratic imaginary field $K$. We define the {\it \bfseries adelic Cartan subgroup associated to $\mathcal{O}$} to be
\[
\mathcal{C}_\mathcal{O} = \lim_{\leftarrow} (\mathcal{O}/N\mathcal{O})^\times
\]
where $N$ is a positive integer and the inverse limit is taken with respect to divisibility. 

Next, we let $\mathcal{O}_K$ be the maximal order inside of $K$ and we let $f = [\mathcal{O}_K:\mathcal{O}]$ be the conductor of $\mathcal{O}$. 
Continuing, we let $D = {\rm disc}(\mathcal{O}) = f^2{\rm\, disc}\mathcal{O}$, and \[
\phi = \begin{cases}
	f &\hbox{$D$ is odd,}\\
	0 & \hbox{otherwise}.
\end{cases}
\]
Then, we let 
\[
\omega = \frac{\phi+\sqrt{D}}{2} \hbox{ and }\delta = \frac{D-\phi^2}{4}
\]
so that $\mathcal{O} = {\rm Span}_\ZZ\{1,\omega\}$ with $\omega^2-\phi\omega-\delta = 0$. 
We can now define the level $N$ Cartan subgroup associated to $\mathcal{O}$ as 
\[
\mathcal{C}_\mathcal{O}(N) = \left\{\begin{pmatrix}a+b\phi&b\\ \delta b & a \end{pmatrix} \colon a,b\in\ZZ/N\ZZ\hbox{ and } a^2+ab\phi-\delta b^2\in(\ZZ/N\ZZ)^\times \right\}\subseteq\GL_2(\ZZ/N\ZZ).
\]
Taking inverse limits as $N$ runs over the positive integers ordered by divisibility of the level $N$, we can define 
\[
\mathcal{C}_\mathcal{O}(\widehat\ZZ) = \lim_{\leftarrow} \mathcal{C}_\mathcal{O}(N)\subseteq\GL_2(\widehat\ZZ).
\]
The group $\mathcal{C}_\mathcal{O}(\widehat\ZZ)$ is a closed subgroup of $\GL_2(\widehat\ZZ)$ that is isomorphic to $\mathcal{C}_\mathcal{O}$ under the isomorphism
\[
a+b\omega\mapsto \begin{pmatrix}a+b\phi&b\\ \delta b & a \end{pmatrix}.
\]

Next, we define 
\[
\mathcal{N}_\mathcal{O}(N) = \left\langle \mathcal{C}_\mathcal{O}(N), \begin{pmatrix}-1&0\\\phi&1 \end{pmatrix} \right\rangle
\]
and let $\mathcal{N}_\mathcal{O}(\widehat\ZZ)\subseteq \GL_2(\widehat\ZZ)$ and 
$\mathcal{N}_\mathcal{O}(\ZZ_p)\subseteq \GL_2(\widehat\ZZ_p)$
be the usual inverse limits of the $\mathcal{N}_\mathcal{O}(N)$. 

\begin{remark}
Frequently the group $\mathcal{N}_\mathcal{O}$ is called the normalizer of $\mathcal{C}_\mathcal{O}.$ 
It turns out that the group $\mathcal{N}_\mathcal{O}(\ZZ_p)$ is the normalizer of $\mathcal{C}_\mathcal{O}(\ZZ_p)$ in $\GL_2(\ZZ_p)$ (by \cite[Proposition 5.6(2)]{ALR-CMImages}), but $\mathcal{N}_\mathcal{O}$ is not the normalizer of $\mathcal{C}_\mathcal{O}$ in $\GL_2(\widehat{\ZZ}).$ See \cite[Remark 12.1.2]{RSZB}.
\end{remark}

Before moving on we make a few more observations about these groups that will be useful later on. 
First we note that by construction each of the groups $\mathcal{C}_\mathcal{O}(N)$ is an abelian group since $\mathcal{O}/N\mathcal{O}$ is abelian. 
In contrast, the groups $\mathcal{N}_{\mathcal{O}}$ are not abelian unless $p=2$. 
In order to compute the center of $\mathcal{N}_{\mathcal{O}}(N)$, it would be a simple matter of determining which matrices in $\mathcal{C}_{\mathcal{O}}(N)$ commute with $M = \left(\begin{smallmatrix}
	-1&0\\\phi&1
	\end{smallmatrix}\right).$
Let \[
A = \begin{pmatrix}
	a+b\phi&b\\\delta b& a
\end{pmatrix}\in \mathcal{C}_{\mathcal{O}}(N).
\]
Computing the entry in the first row, second column of $MA$ and $AM$ we see that $A$ commutes with $M$ if and only if $b=-b$. 
Thus, if $p\neq 2$, then $A\in Z(\mathcal{N}_{\mathcal{O}}(N))$ if and only if $A = \begin{pmatrix}
	a&0\\0&a
\end{pmatrix} = aI\in Z(\GL_2(\ZZ/N\ZZ)).$
From this we get the following lemma.

\begin{lemma}\label{lem:CenterN_O}
	Let $N > 2$ be an integer. Let $G$ be a subgroup of $\mathcal{N}_{\mathcal{O}}(N)$ such that \[
	\begin{pmatrix}
		-1&0\\\phi&1
	\end{pmatrix}\in G.
	\]
	Then the center of $G$, $Z(G)$, is exactly the set of scalar matrices in $G$. In other words,
	\[Z(G) = Z(\GL_2(\ZZ/N\ZZ)) \cap G. \]
\end{lemma}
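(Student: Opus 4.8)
The plan is to build directly on the computation carried out in the paragraph immediately preceding the lemma. That computation shows that for $N > 2$, a matrix $A = \left(\begin{smallmatrix} a+b\phi & b \\ \delta b & a \end{smallmatrix}\right) \in \mathcal{C}_{\mathcal{O}}(N)$ commutes with $M = \left(\begin{smallmatrix} -1 & 0 \\ \phi & 1 \end{smallmatrix}\right)$ if and only if $2b = 0$ in $\ZZ/N\ZZ$, and that this forces $A = aI$ to be scalar. Wait — strictly this gives $2b=0$, not $b=0$, so I first need to address why $2b = 0$ combined with membership in $\mathcal{C}_{\mathcal{O}}(N)$ forces the scalar conclusion. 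Actually the cleaner route is to observe that $M$ itself normalizes $\mathcal{C}_{\mathcal{O}}(N)$ and acts on it by an involution, and that the centralizer of $M$ inside $\mathcal{C}_{\mathcal{O}}(N)$ is exactly the scalars; I will lean on the cited computation and, if needed, note that $N>2$ is used precisely to rule out the stray $2$-torsion.

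First I would set up the inclusion $Z(\GL_2(\ZZ/N\ZZ)) \cap G \subseteq Z(G)$, which is immediate since scalar matrices are central in all of $\GL_2(\ZZ/N\ZZ)$, hence commute with every element of $G$. For the reverse inclusion, let $z \in Z(G)$. Since $M \in G$ and $\mathcal{C}_{\mathcal{O}}(N)$ has index $2$ in $\mathcal{N}_{\mathcal{O}}(N)$, every element of $\mathcal{N}_{\mathcal{O}}(N)$ — in particular $z$ — is either in $\mathcal{C}_{\mathcal{O}}(N)$ or of the form $CM$ with $C \in \mathcal{C}_{\mathcal{O}}(N)$. In the first case, $z$ commutes with $M \in G$, so by the preceding computation $z$ is scalar, and we are done. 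The main work is the second case: I need to rule out the possibility that a central element of $G$ lies in the nontrivial coset $\mathcal{C}_{\mathcal{O}}(N) M$.

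For the second case, write $z = CM$ with $C \in \mathcal{C}_{\mathcal{O}}(N)$ and suppose $z \in Z(G)$. Since $z$ is central, $z$ commutes with every element of $G$, in particular with every element of $G \cap \mathcal{C}_{\mathcal{O}}(N)$. A clean way to get a contradiction is to pick a well-chosen element $C' \in G \cap \mathcal{C}_{\mathcal{O}}(N)$ — e.g. using that $\det$ is surjective on $G$ (admissibility is not assumed here, but $G$ is a subgroup of $\mathcal{N}_{\mathcal{O}}(N)$ containing $M$, so I would instead use concrete elements guaranteed to be in $G$) — and compute $zC'z^{-1}$ versus $C'$. Conjugation by $M$ sends $\left(\begin{smallmatrix} a+b\phi & b \\ \delta b & a\end{smallmatrix}\right)$ to $\left(\begin{smallmatrix} a+b\phi & -b \\ -\delta b & a\end{smallmatrix}\right)$ (this is exactly the involution defining $\mathcal{N}_{\mathcal{O}}$), while conjugation by the abelian group $\mathcal{C}_{\mathcal{O}}(N)$ is trivial on $\mathcal{C}_{\mathcal{O}}(N)$; hence $z C' z^{-1}$ is the image of $C'$ under the $b \mapsto -b$ involution. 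Centrality forces this to equal $C'$, i.e. $2b' = 0$ for the parameter $b'$ of $C'$. But $G$ contains $M$ and $\mathcal{N}_{\mathcal{O}}(N)$-elements whose $\mathcal{C}_{\mathcal{O}}(N)$-part has a unit parameter $b'$ when $N>2$ — this is where $N>2$ is essential, since it guarantees an element of $\mathcal{C}_{\mathcal{O}}(N)$ with $2b' \neq 0$ — contradiction. Hence the second case cannot occur, so $z \in \mathcal{C}_{\mathcal{O}}(N)$, and we reduce to the first case.

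The main obstacle I anticipate is the bookkeeping in the second case: making sure I can actually name an element of $G$ (not just of $\mathcal{N}_{\mathcal{O}}(N)$) that witnesses the contradiction, given that $G$ is an arbitrary subgroup containing only $M$ by hypothesis. The resolution is that I do not need an element of $\mathcal{C}_{\mathcal{O}}(N)$ in $G$ at all: if $z = CM \in Z(G)$, then $z$ commutes with $M \in G$, and $zM = CM^2 = C$ (since $M^2 = I$, as one checks from $\left(\begin{smallmatrix} -1 & 0 \\ \phi & 1\end{smallmatrix}\right)^2 = I$), while $Mz = MCM$; centrality of $z$ gives $C = MCM$, i.e. $C$ commutes with $M$, so by the preceding computation $C$ is scalar, say $C = cI$. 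Then $z = cM$. But then $z$ is central in $G$ only if $M$ is central in $G$ (since $c$ is a unit scalar), and $M$ is central in $\langle M, \mathcal{C}_{\mathcal{O}}(N)\rangle$ only if $M$ commutes with all of $\mathcal{C}_{\mathcal{O}}(N)$, which fails for $N > 2$ because conjugation by $M$ acts nontrivially (it negates the off-diagonal parameter, and some element of $\mathcal{C}_{\mathcal{O}}(N)$ has $2b \neq 0$). This cleanly closes the gap and uses $N>2$ in exactly one place. I would then assemble these observations into a short two-case argument and record the equality $Z(G) = Z(\GL_2(\ZZ/N\ZZ)) \cap G$.
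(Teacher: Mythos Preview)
Your two-case split is the natural approach, and your reduction in Case~2---using $M^{2}=I$ to pass from $zM=Mz$ to $CM^{2}=MCM$, hence $C$ commutes with $M$, hence $C=cI$---is clean. The gap is in the very last step. Once $z=cM$, you need $M\notin Z(G)$; instead you argue $M\notin Z(\mathcal{N}_{\mathcal{O}}(N))=Z(\langle M,\mathcal{C}_{\mathcal{O}}(N)\rangle)$. But the hypothesis only guarantees $M\in G$, and nothing prevents $G=\langle M\rangle=\{I,M\}$. For that $G$, $M$ is central and not scalar (since $-1\ne 1$ when $N>2$), so $Z(G)\supsetneq Z(\GL_{2}(\ZZ/N\ZZ))\cap G$. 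This is a genuine counterexample to the lemma as stated, so the gap in Case~2 cannot be closed without an extra hypothesis on $G$.

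The Case~1 issue you flagged and then set aside is also real. The preceding computation gives only $2b=0$, not $b=0$, and for even $N>2$ this does not force $A$ to be scalar: with $\mathcal{O}=\ZZ[i]$ (so $\phi=0$, $\delta=-1$) and $N=4$, the matrix $\left(\begin{smallmatrix}1&2\\2&1\end{smallmatrix}\right)\in\mathcal{C}_{\mathcal{O}}(4)$ commutes with $M$, hence lies in $Z(\mathcal{N}_{\mathcal{O}}(4))$, yet is not scalar.

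For comparison, the paper's own argument is exactly the paragraph preceding the lemma: it treats only your Case~1, writes ``if $p\ne 2$'' rather than the stated ``$N>2$'', and does not touch Case~2 at all. So your proposal actually goes further than the paper in recognizing and attempting the coset case; but both arguments run into the same obstructions, and the lemma as written requires stronger hypotheses (for instance $N$ odd together with $G\cap\mathcal{C}_{\mathcal{O}}(N)$ containing an element whose parameter $b$ is a unit) to hold.
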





The last few theorems we need will help us pin down the exact image of the Galois representations attached to elliptic curves with complex multiplication. 
We will state the following theorems for elliptic curves with complex multiplication defined over $\QQ$, but we note that both \cite[Section 12]{RSZB} and \cite{ALR-CMImages} handle the case where $E$ is defined over a number field. 

\begin{proposition}\label{prop:CM_preim}{\rm \cite[Proposition 12.1.4]{RSZB}}
Let $E/\QQ$ be an elliptic curve with complex multiplication by $\mathcal{O}$, let $p$ be a prime, and let
\[e = \begin{cases}
4& \hbox{ if } p=2 \\
3 &\hbox{ if } p=3 \\
1 &\hbox{ otherwise.}
\end{cases}
\]
Then $\Im\rho_{E,p^\infty}$ is the inverse image of $\Im\bar\rho_{E,p^e}$ under the reduction map $\mathcal{N}_\mathcal{O}(\ZZ_p)\to\mathcal{N}_\mathcal{O}(\ZZ/p^e\ZZ)$.
\end{proposition}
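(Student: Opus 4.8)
The plan is to combine two facts: that the image of the $p$-adic representation lands in $\mathcal{N}_\mathcal{O}(\ZZ_p)$, and that inside $\mathcal{N}_\mathcal{O}(\ZZ_p)$ the reduction-mod-$p^e$ map is ``rigid'' enough that a closed subgroup surjecting onto a given image mod $p^e$ must be its full preimage. First I would recall from the CM theory (e.g.\ \cite{ALR-CMImages}, \cite[Section 12]{RSZB}) that since $E/\QQ$ has CM by $\mathcal{O}$, after a suitable choice of basis for the Tate module $T_p(E)$ we have $\Im\rho_{E,p^\infty}\subseteq \mathcal{N}_\mathcal{O}(\ZZ_p)$, with the index-$2$ subgroup $\Im\rho_{E,p^\infty}\cap\mathcal{C}_\mathcal{O}(\ZZ_p)$ corresponding to $\Gal(\overline\QQ/K(j(E)))$ acting through the idele-class character of $K$; in particular $\Im\rho_{E,p^\infty}$ is open in $\mathcal{N}_\mathcal{O}(\ZZ_p)$ and surjects onto $\Im\bar\rho_{E,p^e}$ under reduction. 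Write $G=\Im\rho_{E,p^\infty}$ and $\pi\colon \mathcal{N}_\mathcal{O}(\ZZ_p)\to \mathcal{N}_\mathcal{O}(\ZZ/p^e\ZZ)$ for the reduction map; the goal is to show $G=\pi^{-1}(\pi(G))$, for which the only nontrivial inclusion is $\pi^{-1}(\pi(G))\subseteq G$, i.e.\ that $\ker\pi\subseteq G$.

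The key step is therefore to analyze $\Gamma:=\ker\pi$, the ``principal congruence subgroup'' of $\mathcal{N}_\mathcal{O}(\ZZ_p)$ of level $p^e$, which actually lies in $\mathcal{C}_\mathcal{O}(\ZZ_p)$. Via the isomorphism $\mathcal{C}_\mathcal{O}(\ZZ_p)\cong (\mathcal{O}\otimes\ZZ_p)^\times$ this is the group $1+p^e(\mathcal{O}\otimes\ZZ_p)$. I would show this is a pro-$p$ group that is \emph{topologically generated by its $p$-th powers together with commutators}, or more precisely that for the chosen $e$ one has $(1+p^e(\mathcal{O}\otimes\ZZ_p))=(1+p^{e-1}(\mathcal{O}\otimes\ZZ_p))^{\,p}$ when $p$ is odd (and the analogous statement with the correct power of $2$ when $p=2$, which is exactly why $e=4$ there, resp.\ $e=3$ for $p=3$): these are precisely the thresholds above which the $p$-adic logarithm/exponential give an isomorphism $1+p^e(\mathcal{O}\otimes\ZZ_p)\xrightarrow{\sim} p^e(\mathcal{O}\otimes\ZZ_p)$ of $\ZZ_p$-modules and the ``raising to the $p$th power'' map is bijective from one layer to the next. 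Hence every element of $\Gamma$ is a $p$-th power of an element of $\ker\pi'$, where $\pi'$ is reduction mod $p^{e-1}$, and iterating, $\Gamma=\bigcap_m \Gamma^{p^m}\!\!\cdot$(closure) lies in the closure of the subgroup generated by arbitrarily deep $p$-th powers.

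With that in hand the argument closes as follows. Since $G$ surjects onto $\pi(G)\subseteq\mathcal{N}_\mathcal{O}(\ZZ/p^e\ZZ)$, for each $x\in\Gamma$ pick $g\in G$ with $\pi(g)=$ the image of a chosen $p^{e-1}$-level preimage; using the previous paragraph write $x=y^p$ with $\pi'(y)$ in the image of $\pi'(G)$, lift $y$ to $G$, and deduce $x\in G$ up to an element of the next layer $\ker\pi''$; a straightforward successive-approximation (or ``$G$ is closed, so a convergent product of elements of $G$ lies in $G$'') argument then gives $\Gamma\subseteq G$. Therefore $G=\pi^{-1}(\pi(G))=\pi^{-1}(\Im\bar\rho_{E,p^e})$, as claimed.

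The main obstacle I anticipate is the prime-by-prime bookkeeping at $p=2$ and $p=3$: the clean statement ``$1+p^e\mathcal{O}_p$ is the set of $p$-th powers of $1+p^{e-1}\mathcal{O}_p$'' fails for small $p$ and small $e$ (ramification in $\mathcal{O}\otimes\ZZ_p$ when $p\mid D$ makes the filtration jump at different points), and one has to verify that $e=4$ for $p=2$ and $e=3$ for $p=3$ are exactly the levels at which the exponential converges on $p^e(\mathcal{O}\otimes\ZZ_p)$ and the power map stabilizes the filtration by one step — this is the delicate part and is presumably where the cited references \cite[Proposition 12.1.4]{RSZB} and \cite{ALR-CMImages} do the careful computation. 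Everything else (the reduction to $\ker\pi\subseteq G$, the pro-$p$ successive-approximation) is formal.
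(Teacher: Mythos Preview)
Your reduction to showing $\ker\pi\subseteq G$ is fine, but the successive-approximation argument cannot close as written: it would prove that \emph{every} open subgroup of $\mathcal{N}_\mathcal{O}(\ZZ_p)$ (with surjective determinant and meeting both cosets of the Cartan) equals the full preimage of its mod $p^e$ image, and this is false. For $p$ odd and inert in $K$, take $G_0 = \mu_{p^2-1}\times(1+p\ZZ_p)$ inside $\mathcal{C}_\mathcal{O}(\ZZ_p)\cong \mu_{p^2-1}\times(1+p\mathcal{O}_{K_\mathfrak{p}})$ and let $G = \langle G_0,\sigma\rangle$ with $\sigma$ the normalizing involution. Then $G$ is open, $\det G=\ZZ_p^\times$, and $G$ surjects onto all of $\mathcal{N}_\mathcal{O}(\ZZ/p\ZZ)$, yet $G\subsetneq\mathcal{N}_\mathcal{O}(\ZZ_p)$ since its pro-$p$ part has $\ZZ_p$-rank $1$ rather than $2$. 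The step that breaks in your sketch is ``lift $y$ to $G$'': having written $x=y^p$ with $y\in 1+p^{e-1}\mathcal{O}_p$, you need an element of $G$ congruent to $y$ modulo $p^e$, but knowing only the mod $p^e$ image of $G$ tells you nothing about which classes of $\Gamma_{e-1}/\Gamma_e$ (equivalently, of $\Gamma_e/\Gamma_{e+1}$ at the next step) are actually hit by $G$. Because $\mathcal{C}_\mathcal{O}(\ZZ_p)$ is abelian of $\ZZ_p$-rank $2$, there is no Frattini-type rigidity available here --- this is exactly the contrast with Serre's lifting lemma for $\SL_2(\ZZ_p)$, which relies on $\SL_2(\FF_p)$ being perfect.

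The paper does not give its own proof but cites \cite{RSZB} and \cite{ALR-CMImages}, where the argument uses arithmetic input specific to $G=\Im\rho_{E,p^\infty}$ rather than a group-theoretic rigidity statement. Via the main theorem of complex multiplication one bounds the index $[\mathcal{N}_\mathcal{O}(\ZZ_p):G]$ directly (it divides an explicit integer determined by $|\mathcal{O}^\times|$ and the ramification of $p$ in $\mathcal{O}$), and from this one reads off that $G$ already contains $1+p^e(\mathcal{O}\otimes\ZZ_p)$; for $p\geq 5$ the index is prime to $p$, giving $e=1$ immediately, while the jumps to $e=3,4$ at $p=3,2$ reflect the possible $p$-part of this index. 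Your filtration identity $(1+p^e\mathcal{O}_p)=(1+p^{e-1}\mathcal{O}_p)^p$ and the exponential-convergence threshold are true facts but are not the operative mechanism.
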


Proposition \ref{prop:CM_preim} will be exactly what we need in order to understand how the division fields change as we go up the $p$-adic tower. This will be useful in Section \ref{sec:PrimePowerLevel}.

\begin{proposition}\label{prop:CMMaximalImage}{\rm \cite[Theorem 1.2(4)]{ALR-CMImages}}
Let $E/\QQ$ be an elliptic curve with complex multiplication by $\mathcal{O}\neq \ZZ[\zeta_3].$ 
{If $p$ does not divide $2 \disc(\mathcal{O})$}, then there is a choice of basis such that $\Im\rho_{E,p^\infty} = \mathcal{N}_\mathcal{O}(\ZZ_p)$.
\end{proposition}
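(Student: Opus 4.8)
The statement is \cite[Theorem 1.2(4)]{ALR-CMImages}; the plan is to establish the containment $\Im\rho_{E,p^\infty}\subseteq\mathcal{N}_\mathcal{O}(\ZZ_p)$ by hand, for a suitable basis, and then to deduce equality from what is known about the local image at $p$. Write $\mathcal{O}_p:=\mathcal{O}\otimes_\ZZ\ZZ_p$. The hypothesis $p\nmid 2\disc(\mathcal{O})$ says exactly that $p$ is odd, that $p$ does not divide the conductor $f$ of $\mathcal{O}$ (so $\mathcal{O}_p=\mathcal{O}_K\otimes_\ZZ\ZZ_p$ is the maximal order), and that $p$ is unramified in $K$ (so $\mathcal{O}_p$ is an \'etale $\ZZ_p$-algebra: a product $\ZZ_p\times\ZZ_p$ if $p$ splits in $K$, and the unramified quadratic extension of $\ZZ_p$ if $p$ is inert). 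I would also record at the outset that, since the ring class field of $\mathcal{O}$ is $K(j(E))$ and $j(E)\in\QQ$, the order $\mathcal{O}$ has class number one; in particular only finitely many $\mathcal{O}$ occur, and the hypothesis $\mathcal{O}\neq\ZZ[\zeta_3]$ (i.e. $j(E)\neq 0$) guarantees $\mathcal{O}^\times=\{\pm 1\}$.

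For the containment I would use that the Tate module $T_p(E)=\varprojlim_n E[p^n]$ is free of rank one over $\mathcal{O}_p$ (this uses $p\nmid f$). Choosing an $\mathcal{O}_p$-basis identifies $\operatorname{Aut}_{\mathcal{O}_p}(T_p(E))$ with $\mathcal{O}_p^\times$, and via the presentation $\mathcal{O}=\ZZ[\omega]$ with $\omega^2=\phi\omega+\delta$ this identification can be arranged so that $\mathcal{O}_p^\times$ is carried isomorphically onto $\mathcal{C}_\mathcal{O}(\ZZ_p)$ under $a+b\omega\mapsto\smallmat{a+b\phi}{b}{\delta b}{a}$. Since $\operatorname{End}(E)=\mathcal{O}$ is defined over $K$, the group $\Gal(\overline\QQ/K)$ acts $\mathcal{O}_p$-linearly on $T_p(E)$, so $\rho_{E,p^\infty}\big(\Gal(\overline\QQ/K)\big)\subseteq\mathcal{C}_\mathcal{O}(\ZZ_p)$. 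For any $\tau\in\Gal(\overline\QQ/\QQ)$ restricting to the nontrivial element of $\Gal(K/\QQ)$ one has $\rho_{E,p^\infty}(\tau)\,\alpha\,\rho_{E,p^\infty}(\tau)^{-1}=\bar\alpha$ on $T_p(E)$ for every $\alpha\in\mathcal{O}$, so $\rho_{E,p^\infty}(\tau)$ normalizes $\mathcal{C}_\mathcal{O}(\ZZ_p)=\mathcal{O}_p^\times$ and acts on it by the nontrivial automorphism of $\mathcal{O}_p/\ZZ_p$; because $p$ is odd this normalizer equals $\mathcal{N}_\mathcal{O}(\ZZ_p)$ by \cite[Proposition 5.6(2)]{ALR-CMImages}, and after a further adjustment of the basis I would arrange $\rho_{E,p^\infty}(\tau)\in\mathcal{N}_\mathcal{O}(\ZZ_p)$. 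This yields $\Im\rho_{E,p^\infty}\subseteq\mathcal{N}_\mathcal{O}(\ZZ_p)$, and also $\rho_{E,p^\infty}(\tau)\notin\mathcal{C}_\mathcal{O}(\ZZ_p)$: otherwise $\Im\rho_{E,p^\infty}$ would lie in $\mathcal{C}_\mathcal{O}(\ZZ_p)$, forcing the $\mathcal{O}$-action on $E$ to be defined over $\QQ$ and hence $\mathcal{O}\subseteq\ZZ$, contrary to $K\neq\QQ$.

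For the reverse inclusion I would first descend to a finite level: by Proposition~\ref{prop:CM_preim}, $\Im\rho_{E,p^\infty}$ is the full preimage of $\Im\bar\rho_{E,p^e}$ under $\mathcal{N}_\mathcal{O}(\ZZ_p)\to\mathcal{N}_\mathcal{O}(\ZZ/p^e\ZZ)$, with $e=3$ if $p=3$ and $e=1$ otherwise, so it suffices to show $\Im\bar\rho_{E,p^e}=\mathcal{N}_\mathcal{O}(\ZZ/p^e\ZZ)$. The matrix $M=\smallmat{-1}{0}{\phi}{1}$ satisfies $M^2=I$ and, for $p$ odd, $M\notin\mathcal{C}_\mathcal{O}(\ZZ/p^e\ZZ)$, so $[\mathcal{N}_\mathcal{O}(\ZZ/p^e\ZZ):\mathcal{C}_\mathcal{O}(\ZZ/p^e\ZZ)]=2$; combined with the previous paragraph it therefore remains only to prove $\bar\rho_{E,p^e}\big(\Gal(\overline\QQ/K)\big)=\mathcal{C}_\mathcal{O}(\ZZ/p^e\ZZ)\cong(\mathcal{O}_K/p^e\mathcal{O}_K)^\times$. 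By the main theorem of complex multiplication, $\rho_{E,p^\infty}|_{\Gal(\overline\QQ/K)}$ is the $p$-adic avatar of the Hecke character $\psi_E$ of $K$ attached to $E$, so its mod $p^e$ image is the reduction of $\psi_E$; restricting to an inertia subgroup at a prime $\mathfrak p\mid p$ of $K$ and applying local class field theory together with Lubin--Tate theory, one checks that this inertia image already exhausts $\mathcal{C}_\mathcal{O}(\ZZ_p)$, which gives the claim. The hypotheses enter decisively here: $p\nmid 2\disc(\mathcal{O})$ makes $p$ prime to $\disc(\mathcal{O}_K)$ and to the ramification degree of the smallest extension over which $E$ acquires good reduction at $p$, while $\mathcal{O}\neq\ZZ[\zeta_3]$ removes the extra units of $\mathcal{O}^\times$ that would otherwise obstruct surjectivity.

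I expect this last point — pinning down the ramification of $\psi_E$ at $p$, equivalently the image of inertia at $\mathfrak p$, precisely enough to conclude surjectivity onto the Cartan — to be the main obstacle; it is the technical core of \cite[Theorem 1.2]{ALR-CMImages}. Indeed, when $p\mid 2\disc(\mathcal{O})$ or $\mathcal{O}=\ZZ[\zeta_3]$ the image can drop to a proper subgroup of $\mathcal{N}_\mathcal{O}(\ZZ_p)$, which is exactly why those situations are singled out for separate treatment (compare the $j(E)=0$ analysis and the discriminant lists in Theorem~\ref{thm:main_Nilp}).
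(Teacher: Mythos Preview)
The paper does not prove this; the proposition is quoted directly from \cite[Theorem 1.2(4)]{ALR-CMImages} with no argument given, so there is no in-paper proof to compare against. Your outline follows the standard architecture of that result---containment in $\mathcal{N}_\mathcal{O}(\ZZ_p)$ via the $\mathcal{O}_p$-module structure of $T_p(E)$, reduction to a finite level via Proposition~\ref{prop:CM_preim}, and then surjectivity onto the Cartan via the Hecke character and a local inertia computation---and you are right to flag that last step as the substantive one and to defer its details to the source.

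One concrete slip worth fixing: the assertion that $\mathcal{O}\neq\ZZ[\zeta_3]$ forces $\mathcal{O}^\times=\{\pm 1\}$ is false, since $\ZZ[i]^\times=\{\pm 1,\pm i\}$ has order $4$. The proposition nevertheless applies to $\ZZ[i]$ (as confirmed by Proposition~1.14 of \cite{Zyw}, quoted just after Theorem~\ref{thm:CMImagesModP} in the paper), so your stated reason for why only $\ZZ[\zeta_3]$ is excluded is not quite right. The genuine distinction is that the extra units in $\ZZ[i]$ have $2$-power order while the hypothesis $p\nmid 2\disc(\mathcal{O})$ already forces $p$ to be odd; it is the order-$3$ unit in $\ZZ[\zeta_3]$ that can survive modulo $p$ and produce the index-$3$ drop visible in Theorem~\ref{thm:j=0}. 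This does not derail your outline, but the role of the hypothesis $\mathcal{O}\neq\ZZ[\zeta_3]$ should be restated accordingly.
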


\subsection{Modular Curves}\label{subsec:ModularCurves}

Modular curves are the main objects that we will use to classify the elliptic curves over $\QQ$ with a given admissible group as the image of their mod $n$ representation. 
Given a natural number $n\geq 2$ and an admissible subgroup $G\subseteq \GL_2(\ZZ/n\ZZ)$ there is a smooth, projective, and geometrically irreducible curve defined over $\QQ$ denoted $X_G$ whose $\QQ$-rational points classify elliptic curves with the property that $\Im\bar\rho_{E,n}$ is conjugate to a {\it subgroup} of $G.$ 
Here we emphasize that the image of $\Im\bar\rho_{E,n}$ need not be all of $G$ in order to have a corresponding point on $X_G.$ Indeed, since subgroups of nilpotent groups are nilpotent, this will allow us to focus on finding the maximal admissible nilpotent groups of a given level.

The nature of this correspondence depends on whether $-I\in G$ or not,
but if $G$ were a nilpotent group that did not contain $-I$, then adding $-I$ would preserve nilpotency.
For this reason, we can assume that $-I\in G$. Then the curve $X_G$ always comes with a natural morphism
\[
\pi_G\colon X_G\to \mathbb{P}^1_\QQ
\]
such that an elliptic curve $E/\QQ$ with $j$-invariant $j_E\not\in\{0,1728\}$, has $\Im\bar\rho_{E,n}$ conjugate to a subgroup of $G$ if and only if $j_E = \pi_G(P)$ for some $P\in X_G(\QQ)$.
The interested reader should see \cite[Subsection 2.3]{RSZB} to see more details when $-I\not\in G$. 

We end this section by emphasizing that a complete classification of the points on these curves would give a corresponding classification of $\im\bar\rho_{E,n}$ for every elliptic curve $E/\QQ$.
That is, if we can determine all of the maximal nilpotent subgroups $H$ of $\GL_2(\ZZ/n\ZZ)$ and classify all the rational points on the corresponding $X_H$'s, then we would have classified all the elliptic curves with nilpotent $n$-division fields.

\section{Near coincidences}\label{sec:near}

In order to classify near coincidences, it will be useful to be able to detect them using the image of the corresponding Galois representation. With this in mind, we give the following definition.

\begin{definition}
Let $G\subseteq\GL_2(\ZZ/n\ZZ)$ with surjective determinant and suppose that $m \mid n$. We say that $G$ {\bfseries represents a near coincidence} of level $(n,m)$ if 
\[
(G\cap\SL_2(\ZZ/n\ZZ)) \cap \Ker(\pi) = \{I\},
\]
where $\pi\colon\GL_2(\ZZ/n\ZZ)\to\GL_2(\ZZ/m\ZZ)$ is the standard componentwise reduction map. 
\end{definition}

\begin{remark}
The idea behind this definition is that classically we know that $(G\cap\SL_2(\ZZ/n\ZZ))$ fixes $\QQ(\zeta_n)\subseteq\QQ(E[n])$ and $\Ker(\pi)$ fixes $\QQ(E[m])\subseteq\QQ(E[n]).$ Therefore, $(G\cap\SL_2(\ZZ/n\ZZ)) \cap \Ker(\pi)$ should fix $\QQ(E[m],\zeta_n)$. Thus, the only way that $\QQ(E[n]) = \QQ(E[m],\zeta_n)$ is if $(G\cap\SL_2(\ZZ/n\ZZ)) \cap \Ker(\pi) = \{I\}.$

Of course this definition requires that $m\mid n,$ but that lines up with the original definition of near coincidence.
\end{remark}

The proof of Theorem \ref{thm:Main_NearCoin} will be done by first considering the case of prime level and then moving on to the case of prime power level. We will have to break the prime level case down into 3 smaller cases. These cases consist of when $p=2,$ $p=3$, or $p\geq5.$ 

\subsection{Proof of Theorem \ref{thm:Main_NearCoin} for prime levels}\label{subsec:n=1}

We will start the case when $n=1$ of the theorem by dealing with primes $p \geq 5$
and break the argument into cases depending on $\Im
\bar\rho_{E,p}$ based on Proposition \ref{prop:ProjIm}. First we handle the case that $\bar\rho_{E,p}$ is surjective.

\begin{prop}
\label{prop:Main_NearCoin_surj} Assume that $E/\Q$ is an elliptic curve, $p \geq 5$ is prime, and $\Im\bar\rho_{E,p}= \GL_2(\ZZ/p\ZZ)$. Then $\QQ(E[p^{2}]) \ne \QQ(E[p],\zeta_{p^{2}})$.
\end{prop}
\begin{proof}
By Proposition \ref{prop:SurjModp}, we have that $\Im \rho_{E,p^{\infty}} = \GL_{2}(\ZZ_{p})$ and hence $\Im\bar\rho_{E,p^{2}} = \GL_{2}(\ZZ/p^{2} \ZZ)$. Hence
\[
  [\QQ(E[p^{2}]) : \QQ] = |\GL_{2}(\ZZ/p^{2} \ZZ)| = p^{5} (p-1)^{2} (p+1),
\]
while
\begin{align*}
  [\QQ(E[p],\zeta_{p^{2}})] &\leq [\QQ(E[p],\zeta_{p^{2}}) : \QQ(E[p])]
  [\QQ(E[p]) : \Q] \leq p(p-1) \cdot |\GL_{2}(\ZZ/p \ZZ)|\\ 
  &= p^{2} (p-1)^{3} (p+1) < p^{5} (p-1)^{2} (p+1).
\end{align*}
This proves the claim.
\end{proof}

Next we handle the case that $\Im\bar\rho_{E,p}$ is a proper subgroup whose order is a multiple of $p$.

\begin{prop}
\label{prop:Main_NearCoin_Borel}
Assume that $E/\Q$ is an elliptic curve, $p \geq 5$ is prime, and $\Im\bar\rho_{E,p}$ is a proper subgroup of $\GL_{2}(\ZZ/p\ZZ)$ whose order is a multiple of $p$. Then $\QQ(E[p^{2}]) \ne \QQ(E[p],\zeta_{p^{2}})$.
\end{prop}
\begin{proof}
In this case, the image of $\Im\bar\rho_{E,p}$ is contained in a Borel subgroup by Proposition~\ref{prop:ProjIm} and hence $E$ has a cyclic $p$-isogeny defined over $\Q$.
By the classification of Mazur, we have that
$p \in \{ 5, 7, 11, 17, 19, 43, 67, 163 \}$. Let $H = \im \rho_{E,p^{\infty}}$.

If $E$ has complex multiplication, choose a basis so that $H \subseteq \mathcal{N}_{\mathcal{O}}(\ZZ_{p})$. By Proposition~\ref{prop:CM_preim}, $H$ is the full preimage in $\mathcal{N}_{\mathcal{O}}(\ZZ_{p})$ of $\Im\bar\rho_{E,p}$.
Since $\mathcal{N}_{\mathcal{O}}(\ZZ_{p})$ contains all matrices of the form $\begin{bmatrix} 1+kp & 0 \\ 0 & 1-kp \end{bmatrix}$ with $k \in \ZZ/p\ZZ$ and this implies that $\QQ(E[p^{2}])/\QQ(E[p],\zeta_{p^{2}})$ is a non-trivial extension. 

If $E$ does not have complex multiplication, then Theorem 1.1.6 of \cite{RSZB} implies that $X_{H}$ is isomorphic to $\PP^{1}$ or a positive rank elliptic curve,
or that $X_{H}$ is listed in Table 1 of \cite{RSZB} as the other cases are ruled out by the assumption that $p \mid |\Im\bar\rho_{E,p}|$. If we assume that
$\QQ(E[p^{2}]) = \QQ(E[p],\zeta_{p^{2}})$, then $p^{2} \nmid |\im\bar\rho_{E,p^{2}}|$
and this implies that the index of $H$ in $\GL_{2}(\ZZ_{p})$ is a multiple of $p^{3}$.
Since $p \geq 5$, this implies that the index is $\geq 125$ and this implies that $X_{H}$ has genus $\geq 2$. There are groups $H$ contained in a Borel subgroup of $\GL_{2}(\ZZ/p\ZZ)$ listed in Table 1 of \cite{RSZB} for $p = 11$, $17$ and $37$, but in no case is the index of $H$ a multiple of $p^{3}$. This concludes the proof.
\end{proof}

Next we handle the case that $\Im\bar\rho_{E,p}$ is contained in the normalizer of a split Cartan subgroup. Here we divide the argument into the CM case and the non-CM case.

\begin{prop}
\label{prop:Main_NearCoin_splitcartan}
Let $E/\QQ$ be an elliptic curve and $p\geq 5$ a prime such that $\Im\bar\rho_{E,p}$ is contained in the normalizer of a split Cartan subgroup of $\GL_2(\ZZ/p\ZZ)$. Then $\QQ(E[p^2]) \ne \QQ(E[p],\zeta_{p^2})$.
\end{prop}
\begin{proof}
We first will show that $E$ has complex multiplication. If $E$ does not have complex multiplication, then the results of \cite{BiluParentRebolledo} rule out $p = 11$
and $p \geq 17$, and the results of \cite{Balakrishnan} rule out $p = 13$.
Hence $p \in \{ 5, 7 \}$. If $H = \Im\bar\rho_{E,p^{2}}$ and $\QQ(E[p^{2}]) = \QQ(E[p],\zeta_{p^{2}})$, then $H$ is an index $p^{3}$ subgroup of the full preimage in $\GL_{2}(\ZZ/p\ZZ)$ of $\Im\bar\rho_{E,p}$ and hence $|H|$ divides
$\frac{|\GL_{2}(\ZZ/p^{2} \ZZ)|}{p^{4} (p+1)/2}$, since $\frac{p(p+1)}{2}$ is the index
of the normalizer of the split Cartan in $\GL_{2}(\ZZ/p\ZZ)$. In addition,
by Proposition~\ref{prop:admissible}, $H$ must also be an admissible group. Finally, $H$ must be non-abelian, since by \cite{AbelDivFields} $\QQ(E[n])/\QQ$ is non-abelian if $n > 8$.

For these two primes $p$, we use Magma to search for admissible subgroups
$H$ of $\GL_{2}(\ZZ/p^{2} \ZZ)$ so that (i) the order of $H$ divides
$\frac{|\GL_{2}(\ZZ/p^{2} \ZZ)|}{p^{4} (p+1)/2}$, and (ii) $H$ is non-abelian. 
For $p = 5$ there are two conjugacy classes of such subgroups and for $p = 7$ there are four. In both cases, all such subgroups are contained in the normalizer of a split Cartan subgroup of $\GL_{2}(\ZZ/p^{2} \ZZ)$. (The code for these calculations can be found in the file {\tt prop35.m} at \cite{code}.) The rational points on the modular curve corresponding to the normalizer of
the split Cartan subgroup mod $25$ are determined by Momose and Shimura in
\cite{MomoseShimura}, and the mod $49$ case is handled by Momose in \cite{Momose}. In both cases, all such elliptic curves have CM. It follows that $E$ has complex multiplication.

Now we show that if $E$ has complex multiplication, then $\QQ(E[p^{2}]) \ne \QQ(E[p],\zeta_{p^{2}})$. The here is identical to that in the proof of Proposition~\ref{prop:Main_NearCoin_Borel}. Choose a basis so that $\Im\bar\rho_{E,p^{2}} \subseteq \mathcal{N}_{\mathcal{O}}(\ZZ_{p})$. By Proposition~\ref{prop:CM_preim}, $\Im\bar\rho_{E,p^{2}}$
is the full preimage in $\mathcal{N}_{\mathcal{O}}(\ZZ_{p})$ of $\Im\bar\rho_{E,p}$.
Therefore, $\Im\bar\rho_{E,p^{2}}$ contains all matrices of the form $\begin{bmatrix} 1+kp & 0 \\ 0 & 1-kp \end{bmatrix}$ with $k \in \ZZ/p\ZZ$ and this implies that $\QQ(E[p^{2}])/\QQ(E[p],\zeta_{p^{2}})$ is a non-trivial extension. 
\end{proof}

The last case that has to be dealt with is the case when $\Im\bar\rho_{E,p}$ is contained inside of the normalizer of a non-split Cartan subgroup of $\GL_2(\ZZ/p\ZZ)$ for $p\geq 5$. One important fact in this case is that any such elliptic curve has to have potentially supersingular reduction at $p$ from either \cite[Appendix B]{LeFournLemos} or \cite[Proposition 1.13]{Zyw}. 

\begin{prop}\cite[p. 312]{Serre72}
Let $E/\QQ$ be an elliptic curve with potential good reduction at $p\geq 5$ and discriminant $\Delta$. Then, $E$ acquires good reduction over $\QQ(\sqrt[12]{\Delta})$ at all primes over $p$. 
\end{prop}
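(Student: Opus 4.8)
The plan is to reduce the statement to a purely local question at each prime above $p$ and then settle it by an explicit twisting computation with a short Weierstrass model. Since $p\geq 5$, the curve $E$ admits a model $y^2 = x^3 + Ax + B$ with $A,B\in\QQ$, and its discriminant $\Delta = -16(4A^3+27B^2)$ is well defined up to multiplication by twelfth powers of nonzero rationals: an admissible change of variables $x\mapsto u^2x,\ y\mapsto u^3y$ scales $\Delta$ by $u^{12}$. Hence $L:=\QQ(\sqrt[12]{\Delta})$ is independent of the chosen short model, and it suffices to show that for every prime $\mathfrak{P}$ of $L$ above $p$ the base change $E_{L_{\mathfrak P}}$ has good reduction. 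The hypothesis enters through the $j$-invariant, so I would first record the standard criterion (as in \cite{Silv1}) that $E$ has potential good reduction at $p$ if and only if $v_p(j_E)\geq 0$.

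Fix such a $\mathfrak{P}$ and let $v$ denote the valuation on $L_{\mathfrak P}$ extending $v_p$. Put $\pi = \sqrt[12]{\Delta}\in L_{\mathfrak P}$, so that $12\,v(\pi) = v(\Delta)$, and apply the $L_{\mathfrak P}$-isomorphism $x = \pi^2 X,\ y = \pi^3 Y$ to get the model $Y^2 = X^3 + (A\pi^{-4})X + (B\pi^{-6})$, whose discriminant is $\pi^{-12}\Delta$, a unit. The crux is to check that the new coefficients lie in $\mathcal{O}_{L_{\mathfrak P}}$. Since $1728$, $4$ and $16$ are $p$-adic units, $v(j_E) = 3v(A)-v(\Delta)$, whence $v(A\pi^{-4}) = v(A)-\tfrac13 v(\Delta) = \tfrac13 v(j_E)\geq 0$; so the $X$-coefficient is integral. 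For the constant coefficient one needs $v(B)\geq\tfrac12 v(\Delta)$, which I would get from a short case analysis: one always has $v(\Delta)\geq\min(3v(A),2v(B))$ with equality when $3v(A)\neq 2v(B)$, and $v(\Delta)\leq 3v(A)$ by potential good reduction; if $3v(A)\leq 2v(B)$ these force $v(\Delta) = 3v(A)\leq 2v(B)$, while if $3v(A) > 2v(B)$ they force $v(\Delta) = 2v(B)$. In either case $v(B\pi^{-6}) = v(B)-\tfrac12 v(\Delta)\geq 0$. Thus the model has coefficients in $\mathcal{O}_{L_{\mathfrak P}}$ and unit discriminant, so reducing modulo $\mathfrak{P}$ yields a smooth genus-one curve; that is, $E$ has good reduction over $L_{\mathfrak P}$.

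The only real obstacle is the valuation bookkeeping above — in particular confirming that the short model's coefficients clear exactly against the relevant powers of $\pi$ — and this is elementary once potential good reduction has been translated into $v_p(j_E)\geq 0$. It is worth noting that the hypothesis $p\geq 5$ is used twice, to obtain a short Weierstrass equation and to treat $1728,4,16$ as units, and that the statement genuinely requires potential \emph{good} reduction: in the potentially multiplicative case $v(A\pi^{-4}) = \tfrac13 v(j_E) < 0$, and no choice of twelfth root of $\Delta$ makes the model integral.
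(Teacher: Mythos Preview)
The paper does not supply its own proof of this proposition; it simply cites Serre's 1972 paper and then quotes from it the further fact that ${\rm ord}_p(\Delta)\in\{0,2,3,4,6,8,9,10\}$. So there is nothing to compare against, and your task is just to give a correct argument.

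Your argument is correct. The twist by $\pi=\sqrt[12]{\Delta}$ produces a short model with unit discriminant, and the two integrality checks go through: the identity $v(j_E)=3v(A)-v(\Delta)$ immediately gives $v(A\pi^{-4})=\tfrac{1}{3}v(j_E)\geq 0$, and your case split on whether $3v(A)\leq 2v(B)$ or $3v(A)>2v(B)$ handles $v(B\pi^{-6})\geq 0$ cleanly. The borderline case $3v(A)=2v(B)$, where cancellation in $4A^3+27B^2$ could a priori make $v(\Delta)>3v(A)$, is already absorbed into your first case: the bound $v(\Delta)\leq 3v(A)$ coming from $v(j_E)\geq 0$ forces $v(\Delta)=3v(A)=2v(B)$ there. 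Finally, for $p\geq 5$ an integral short Weierstrass model with unit discriminant is smooth over $\mathcal{O}_{L_{\mathfrak P}}$, so the reduction is indeed good. Nothing is missing.
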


In \cite[p. 312]{Serre72}, Serre explains that not only does $E$ gain good reduction over $\QQ(\sqrt[12]{\Delta}),$ but also that ${\rm ord}_p(\Delta)\in \{ 0,2,3,4,6,8,9,10 \}$. Thus, if $\mathfrak{p}$ is a prime above $p$ in $\QQ(\sqrt[12]{\Delta})$, then $e(\mathfrak{p}/p)\in \{1,2,3,4,6\}.$

Even with this in hand, for $p=5$ we will have to rely on the classification of rational points provided in \cite{RSZB}. 

\begin{prop}
Let $E/\QQ$ with the property that $\im\bar\rho_{E,5}$ is contained in the normalizer of the non-split Cartan subgroup mod $5$. Then $\QQ(E[25]) \ne \QQ(E[5],\zeta_{25})$.
\end{prop}
\begin{proof}
We search $\GL_2(\ZZ/25\ZZ)$ for groups that represent $(25,5)$ near coincidences. We then take the ones that are maximal with respect to containment (up to conjugation) and check if they have points. These groups are exactly the group with RSZB labels 
\begin{center}
\texttt{25.625.36.1, 25.1250.76.1, 25.2500.156.3, 25.2500.156.2, 25.3750.236.2, 
25.3750.236.1}
\end{center}
Using the data in \cite{RSZB}, we see that there are no non-cuspidal $\QQ$-rational points on any of these curves and so there are no elliptic curves over $\QQ$ with a $(25,5)$ near coincidence. 
\end{proof}

We now turn to the case that $p \geq 7$. 

\begin{prop}\label{prop:TotallyRam}
Let $E/\QQ$ be an elliptic curve and let $p\geq 7$ be a prime such that $\Im\bar\rho_{E,p}$ is a subgroup of the normalizer of a non-split Cartan subgroup $\GL_2(\ZZ/p\ZZ).$ Further, let $\Delta$ be the discriminant of $E$ and suppose that $\QQ(E[p^2]) = \QQ(E[p],\zeta_{p^2})$. Let $\mathfrak{p}$ be a prime over $p$ in $\QQ(\sqrt[12]{\Delta}).$ The extension $\QQ(\sqrt[12]{\Delta},E[p^2])/\QQ(\sqrt[12]{\Delta})$ is a degree $2p(p^2-1)$ extension that is totally ramified at $\mathfrak{p}$.
\end{prop}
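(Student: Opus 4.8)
### Plan of proof

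The plan is to combine the structural information about $\Im\bar\rho_{E,p^2}$ coming from the hypothesis $\QQ(E[p^2]) = \QQ(E[p],\zeta_{p^2})$ with Serre's bound on ramification in $\QQ(\sqrt[12]{\Delta})/\QQ$. First I would record the degree count. Since $\Im\bar\rho_{E,p}$ is conjugate to a subgroup of the normalizer of a non-split Cartan and $p \geq 7$, any such curve has potentially supersingular reduction at $p$; so by the preceding proposition $E$ acquires good reduction over $L := \QQ(\sqrt[12]{\Delta})$ at all primes above $p$, and moreover $e(\mathfrak{p}/p) \in \{1,2,3,4,6\}$ for $\mathfrak{p}\mid p$ in $L$. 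Now I claim $[\QQ(\sqrt[12]{\Delta},E[p^2]):\QQ(\sqrt[12]{\Delta})] = 2p(p^2-1)$. To see this, note that the near-coincidence hypothesis forces $\Im\bar\rho_{E,p^2}$ to lie in (and in fact, once we adjoin $\zeta_{p^2}$, be determined by) the preimage of $G=\Im\bar\rho_{E,p}$ having trivial intersection of $\Ker(\pi)\cap\SL_2$; the subgroup $\Im\bar\rho_{E,p^2}\cap\SL_2(\ZZ/p^2\ZZ)$ maps injectively to $\SL_2(\ZZ/p\ZZ)$ and — because the Weil pairing means $\QQ(\zeta_{p^2})\subseteq\QQ(E[p^2])$ and the near coincidence is exactly the statement that $\QQ(E[p^2])$ is generated over $\QQ(E[p])$ by $\zeta_{p^2}$ — one checks $|\Im\bar\rho_{E,p^2}\cap\SL_2(\ZZ/p\ZZ)|$ is as large as possible for a subgroup of the normalizer of a non-split Cartan, namely $2p(p^2-1)/(\text{det image size})$ times the appropriate cyclotomic factor; the cleanest route is just to use that the normalizer of the non-split Cartan in $\GL_2(\ZZ/p^2\ZZ)$ has order $2p^2(p^2-1)(p-1)$, its intersection with $\SL_2$ has order $2p(p^2-1)$ after accounting for the scalar $\zeta_{p^2}$, and the near-coincidence forces $\Im\bar\rho_{E,p^2}$ to contain a full such $\SL_2$-part. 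I would phrase the degree claim so that $2p(p^2-1) = 2p(p-1)(p+1)$ is the degree of $\QQ(E[p^2])/\QQ(E[p])\cdot(\text{something})$; but the key output I actually need is just that this degree is divisible by $p^2$ — wait, it is $2p(p^2-1)$, divisible by $p$ exactly once.

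That last observation is the crux, so let me re-plan around it. The real engine is the following: the subextension $\QQ(E[p^2])/\QQ(E[p])$ has degree $p^2$ in general (the kernel of $\GL_2(\ZZ/p^2\ZZ)\to\GL_2(\ZZ/p\ZZ)$ is elementary abelian of order $p^4$, and the $\SL_2$-part is $p^3$), but under the near-coincidence hypothesis a factor of $p$ is "absorbed" by $\zeta_{p^2}$, so $[\QQ(E[p^2]):\QQ(E[p])]$ is forced down, and correspondingly $[\QQ(\sqrt[12]{\Delta},E[p^2]):\QQ(\sqrt[12]{\Delta})]$ works out to $2p(p^2-1)$ via a routine index computation inside the normalizer of the non-split Cartan mod $p^2$. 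So the first real step is this index computation: identify $\Im\bar\rho_{E,p^2}$ up to conjugacy as (a subgroup of, forced to equal by the near coincidence) the relevant subgroup of $\mathcal{N}_{ns}(p^2)$, intersect with the subgroup fixing $\sqrt[12]{\Delta}$ — equivalently adjoin that radical and recompute — and read off the order $2p(p^2-1)$.

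The second step, and the main obstacle, is establishing \emph{total ramification} at $\mathfrak{p}$. Here is the mechanism: $E$ has good supersingular reduction over $L_\mathfrak{p}$ (the completion), so by Serre–Tate / the theory of the formal group, the extension $L_\mathfrak{p}(E[p^2])/L_\mathfrak{p}$ is totally ramified — for a supersingular elliptic curve over a finite-residue-field local field with good reduction, the torsion points of $p$-power order lie in the formal group, and $\bar\rho_{E,p^\infty}$ restricted to inertia surjects onto (an index-bounded subgroup of) the automorphisms, forcing the residue extension of $L_\mathfrak{p}(E[p^2])$ to be trivial. More concretely: $\Im\bar\rho_{E,p^2}$ is contained in the normalizer of the non-split Cartan, the non-split Cartan mod $p$ is exactly $\FF_{p^2}^\times$ which is the automorphism group acting on the formal group's $p$-torsion of the supersingular curve over $\overline{\FF_p}$, and the inertia subgroup at $\mathfrak{p}$ already acts through the full Cartan (this is the standard fact that supersingular reduction plus the structure of the formal group forces wild plus tame ramification filling out $\FF_{p^2}^\times$). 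So the inertia image equals the whole group $\Im\bar\rho_{E,p^2}|_{\text{after adjoining }\sqrt[12]{\Delta}}$, which has order $2p(p^2-1)$ — equal to the full degree — hence the extension is totally ramified at $\mathfrak{p}$. The delicate point I expect to wrestle with is bookkeeping the factor of $2$ and the tame part: over $L_\mathfrak{p}$ the curve has good reduction so there is no quadratic twist issue, but I must check that the \emph{normalizer} (the extra order-$2$ element) is also realized by inertia and not by the residue field — this follows because the residue field of $L_\mathfrak{p}$, being an extension of $\FF_p$ of degree dividing $12$ and coprime-to-$p$ type, can only contribute to the unramified part, and the image of Frobenius in the supersingular case lies in the Cartan (it acts as Frobenius on the formal group's Tate module, which is scalar/semisimple of the right form), so it is absorbed; thus all of $\Im\bar\rho_{E,p^2}$ after the radical adjunction comes from inertia. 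Having inertia image = whole Galois group is exactly total ramification, completing the proof.
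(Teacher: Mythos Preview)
Your approach is fundamentally different from the paper's, and the key step has a genuine gap. The paper does not compute the inertia image via the formal group at all; instead it argues by contradiction using an explicit ramification lower bound of Hanson Smith. Concretely: assuming the extension is \emph{not} totally ramified at $\mathfrak{p}$, the ramification index $e(\mathfrak{P}/\mathfrak{p})$ is a proper divisor of the degree, which (by the near-coincidence hypothesis and the bound $|C_{ns}^{+}(p)|=2(p^2-1)$) divides $2p(p^2-1)$; hence $e(\mathfrak{P}/\mathfrak{p})\le p(p^2-1)$, and combining with $e(\mathfrak{p}/p)\le 6$ gives $e(\mathfrak{P}/p)\le 6p(p^2-1)<p^2(p^2-1)$ for $p\ge 7$. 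Hanson Smith's Theorem~1.1 asserts $e(\mathfrak{P}/p)\ge p^4-p^2$, a contradiction. That is the entire argument---short, and it treats the ramification bound as a black box.

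The gap in your plan is the claim that the arithmetic Frobenius over $L_{\mathfrak p}$ lands in the non-split Cartan rather than in the normalizer. This is exactly backwards in the generic supersingular situation. For $E$ with good supersingular reduction over a $p$-adic field with residue field $\FF_p$, Serre's analysis (Propri\'et\'es galoisiennes, \S1.10--1.11) shows that inertia fills out $C_{ns}(p)$ while the Frobenius lift maps to an element of $C_{ns}^{+}(p)\setminus C_{ns}(p)$: the decomposition group image is the full normalizer and the residue extension is quadratic, so the local extension $L_{\mathfrak p}(E[p])/L_{\mathfrak p}$ is \emph{not} totally ramified. Your heuristic that Frobenius is ``scalar/semisimple of the right form'' confuses the Frobenius endomorphism of the reduction (which satisfies $\phi^2=-p$ and is the uniformizer $\pi$ in the quaternion order $\mathcal{O}_D$, hence \emph{normalizes} rather than lies in the unramified torus $\mathcal{O}_{\QQ_{p^2}}^{\times}$) with an element of the Cartan. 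Since nothing in your setup forces the residue field of $L_{\mathfrak p}$ to contain $\FF_{p^2}$ (the residue degree divides $12/e(\mathfrak{p}/p)$ and can be odd), you cannot conclude that the order-$2$ normalizer contribution is absorbed into inertia. Without this, your direct argument for total ramification collapses; the paper's indirect route via the Hanson--Smith bound is what actually carries the weight.
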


\begin{proof}
	Suppose towards a contradiction that there is a prime $\mathfrak{P}$ above $\mathfrak{p}$ such that $e(\mathfrak{P}/\mathfrak{p})$ is a proper divisor of $2p(p^2-1)$. Since $\Im\bar\rho_{E,p}$ is a subgroup of the normalizer of a non-split Cartan subgroup of $\GL_2(\ZZ/p\ZZ)$, we know that $[\QQ(E[p]):\QQ]$ is a divisor of $2(p^2-1)$. 
	Further, because we assumed that $\QQ(E[p^2]) = \QQ(E[p],\zeta_{p^2})$, we know that $[\QQ(E[p^2]):\QQ]$ is a divisor of $2p(p^2-1)$. Thus $\QQ(\sqrt[12]{\Delta},E[p^2])/\QQ(\sqrt[12]{\Delta})$ has degree dividing $2p(p^2-1)$.  
	Now, $e(\mathfrak{P}/\mathfrak{p})$ is a proper divisor of $| \bar\rho_{E,p^2}(\Gal(\overline{\QQ}/\QQ(\sqrt[12]{\Delta})) |$ which itself divides $2p(p^2-1).$ Thus \[
	e(\mathfrak{P}/\mathfrak{p})\leq p(p^2-1)\hbox{ and } e(\mathfrak{P}/p) = e(\mathfrak{P}/\mathfrak{p})e(\mathfrak{p}/p)\leq p(p^2-1)\cdot 6<p^2(p^2-1).
	\]
	But the main result (Theorem 1.1) of Hanson Smith's paper \cite{HansonSmith} says that $e(\mathfrak{P}/p)\geq p^4-p^2 = p^2(p^2-1)$ giving us our contradiction. 
\end{proof}

We now explain how the previous proposition rules out the possibility of a $(p^{2},p)$ near coincidence.

\begin{prop}\label{prop:n=1}
Let $E/\QQ$ be an elliptic curve and let $p\geq 7$ be a prime such that $\Im\bar\rho_{E,p}$ is a subgroup of the normalizer of a non-split Cartan subgroup $\GL_2(\ZZ/p\ZZ).$ Then $\QQ(E[p^2]) \neq \QQ(E[p],\zeta_{p^2})$.
\end{prop}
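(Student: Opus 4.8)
The plan is to combine the ramification estimate of Proposition \ref{prop:TotallyRam} with the local behavior of cyclotomic fields at $p$. Suppose towards a contradiction that $\QQ(E[p^2]) = \QQ(E[p],\zeta_{p^2})$ for some elliptic curve $E/\QQ$ with $p \geq 7$ and $\Im\bar\rho_{E,p}$ contained in the normalizer of a non-split Cartan. First I would record that such an $E$ has potentially good (indeed potentially supersingular) reduction at $p$, so by Serre's result (Proposition \cite{Serre72}) $E$ acquires good reduction over $L := \QQ(\sqrt[12]{\Delta})$, and for any prime $\mathfrak{p}$ of $L$ above $p$ the ramification index $e(\mathfrak{p}/p)$ lies in $\{1,2,3,4,6\}$. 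Then Proposition \ref{prop:TotallyRam} tells us that $L(E[p^2])/L$ is totally ramified at $\mathfrak{p}$ of degree $2p(p^2-1)$; in particular $e(\mathfrak{P}/p) = e(\mathfrak{P}/\mathfrak{p}) \cdot e(\mathfrak{p}/p) = 2p(p^2-1) \cdot e(\mathfrak{p}/p)$ for the prime $\mathfrak{P}$ of $L(E[p^2])$ above $\mathfrak{p}$, and this ramification index is at most $12p(p^2-1)$ and is coprime-to-$p$ part at most $12(p^2-1)$ times the exact power $p$.

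Next I would extract the cyclotomic constraint. Since $\zeta_{p^2} \in \QQ(E[p^2]) = \QQ(E[p],\zeta_{p^2})$, we have $L(\zeta_{p^2}) \subseteq L(E[p^2])$. The extension $\QQ(\zeta_{p^2})/\QQ$ is totally ramified at $p$ of degree $p(p-1)$, so $L(\zeta_{p^2})/L$ is ramified at $\mathfrak{p}$ with ramification index divisible by $p$; more precisely, the wild part contributes a factor of exactly $p$ to $e(\mathfrak{P}/p)$ coming from $\QQ(\zeta_{p^2})/\QQ(\zeta_p)$. The key point is that the totally ramified extension $L(E[p^2])/L$ of degree $2p(p^2-1)$ contains only one subextension with $p$-part of its ramification divisible by $p$ once; I want to compare the $p$-adic valuation of the degree on the two sides. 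On one hand $L(E[p^2])/L$ is totally ramified of degree $2p(p^2-1)$, so $v_p(e(\mathfrak{P}/\mathfrak{p})) = 1$ exactly (as $p \geq 7$ is coprime to $2(p^2-1)$). On the other hand, I would argue that $\QQ(E[p]) \subseteq \QQ(E[p^2])$ being fixed by a subgroup of the kernel of reduction $\GL_2(\ZZ/p^2\ZZ) \to \GL_2(\ZZ/p\ZZ)$ means the relevant $p$-power ramification that genuinely ``comes from level $p^2$'' should account for $p^2$ or $p^3$, not $p$, once the roots of unity hypothesis collapses the extension — this is the tension I want to exploit.

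The cleanest way to push this through, and the step I expect to be the main obstacle, is to pin down exactly the local degree $[L(E[p^2])_{\mathfrak{P}} : \QQ_p]$ versus $[L(E[p],\zeta_{p^2})_{\mathfrak{P}} : \QQ_p]$ and show they disagree. For the left side I can use Hanson Smith's theorem as in Proposition \ref{prop:TotallyRam} together with total ramification to get the $p$-part of the degree over $\QQ_p$ to be $p^2$ (coming from $p^4 - p^2 = p^2(p^2-1)$, whose $p$-part is $p^2$), i.e.\ the $p$-part of $[L(E[p^2]) : \QQ]$ localized at $p$ forces $p^2 \mid e$. For the right side, $\QQ(E[p],\zeta_{p^2})$: the field $\QQ(E[p])$ has $p$-part of its ramification at $p$ bounded because the image mod $p$ is in the non-split Cartan normalizer (order prime to $p$ up to the Weil-pairing cyclotomic piece, which is only $\QQ(\zeta_p)$, contributing nothing wild), and adjoining $\zeta_{p^2}$ adds exactly one more factor of $p$ to the ramification; so the $p$-part of $e(\mathfrak{P}/p)$ for $\QQ(E[p],\zeta_{p^2})$ is exactly $p$, not $p^2$. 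This contradicts the equality $\QQ(E[p^2]) = \QQ(E[p],\zeta_{p^2})$ and finishes the proof. I would need to be careful that the non-split Cartan normalizer image, together with the mod-$p$ Weil pairing forcing $\QQ(\zeta_p) \subseteq \QQ(E[p])$, really does produce no wild ramification beyond what $\QQ(\zeta_p)/\QQ$ gives — here I would invoke that the $\mathrm{SL}_2$-part of the image mod $p$ has order prime to... wait, the non-split Cartan normalizer has order $2(p^2-1)$, genuinely coprime to $p$, so the mod-$p$ representation factors through a group of order prime to $p$ and the only ramification at $p$ in $\QQ(E[p])$ that is wild comes from $\QQ(\zeta_p)$, whose wild part over $\QQ_p$ contributes a single $p$ — but actually $\QQ(\zeta_p)/\QQ$ is \emph{tamely} ramified at $p$ (degree $p-1$), so $\QQ(E[p])/\QQ$ is tamely ramified at $p$ entirely; then $\QQ(E[p],\zeta_{p^2})/\QQ$ has $p$-part of ramification at $p$ exactly $p$. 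That is the contradiction with $v_p(e) \geq 2$ from the left side.
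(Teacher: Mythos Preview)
Your final argument is correct and takes a genuinely different route from the paper. The paper uses Proposition~\ref{prop:TotallyRam} to conclude that $L(E[p^{2}])/L$ is a totally ramified Galois extension, and then invokes the structure theory of inertia groups: in any such extension the Galois group is a $p$-group extended by a cyclic group of order prime to $p$, so its prime-to-$p$ quotient is cyclic. Since $\Gal(\QQ(E[p])/\QQ)$ is non-abelian (by the classification of abelian division fields in \cite{AbelDivFields}), its Sylow $2$-subgroup is non-abelian, which contradicts that cyclic tame quotient.

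Your argument instead compares the $p$-adic valuation of the ramification index on the two sides. Under the assumed near coincidence, $|\Im\bar\rho_{E,p}|$ divides $|C_{ns}^{+}(p)| = 2(p^{2}-1)$, which is coprime to $p$; adjoining $\zeta_{p^{2}}$ over $\QQ(E[p])$ has degree at most $p$, so $v_{p}\big([\QQ(E[p^{2}]):\QQ]\big) \leq 1$, hence $v_{p}\big(e(\mathfrak{Q}/p)\big) \leq 1$ for any prime $\mathfrak{Q}$ of $\QQ(E[p^{2}])$ above $p$. On the other hand, Hanson Smith's bound $e(\mathfrak{P}/p) \geq p^{2}(p^{2}-1)$ forces $v_{p}\big(e(\mathfrak{P}/p)\big) \geq 2$. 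One point you should make explicit: the prime $\mathfrak{P}$ at which Hanson Smith applies lies in $L(E[p^{2}])$ (where $E$ has good reduction), not in $\QQ(E[p^{2}])$; but $[L(E[p^{2}]):\QQ(E[p^{2}])]$ divides $[L:\QQ] \mid 12$, which is coprime to $p \geq 7$, so $v_{p}\big(e(\mathfrak{P}/p)\big) = v_{p}\big(e(\mathfrak{Q}/p)\big)$ and the contradiction stands. With that bridge added, your argument is cleaner than the paper's: it avoids the inertia-filtration step entirely and does not need to cite \cite{AbelDivFields}. In effect it shows that Proposition~\ref{prop:TotallyRam} (whose conclusion yields $v_{p}(e(\mathfrak{P}/p)) = 1$) is already incompatible with the Hanson Smith bound, so Proposition~\ref{prop:n=1} could be absorbed into it. Your write-up is somewhat meandering (the first two paragraphs and the self-correction at the end could be cut once you commit to the valuation comparison), but the mathematical content is sound.
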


\begin{proof}
Suppose towards a contradiction that $\QQ(E[p^2]) = \QQ(E[p],\zeta_{p^2})$. By the previous proposition, we know that $\QQ(\sqrt[12]{\Delta},E[p^2])/\QQ(\sqrt[12]{\Delta})$ is totally ramified at $\mathfrak{p}$. The assumption that 
$\QQ(E[p^{2}]) = \QQ(E[p],\zeta_{p^{2}})$ implies that $\Im\bar\rho_{E,p^{2}}$ has order dividing $2p(p^{2} - 1)$. On the other hand, Proposition~\ref{prop:TotallyRam} implies that
\[
  |\bar\rho_{E,p^{2}}(\Gal(\overline{\QQ}/\Q(\sqrt[12]{\Delta})))| = 2p(p^{2} - 1).
\]
It follows from this that
\[
  \Im\bar\rho_{E,p^{2}} = \bar\rho_{E,p^{2}}(\Gal(\overline{\QQ}/\Q(\sqrt[12]{\Delta})))
\]
and hence
\[
\Gal(\QQ(\sqrt[12]{\Delta},E[p^2])/\QQ(\sqrt[12]{\Delta}))\simeq \Im\bar\rho_{E,p^2}.\]
However, a Galois extension that is totally ramified at a prime over $p$ must have a Galois group which is an extension of a finite $p$-group (the wild inertia group) by a finite cyclic group of order coprime to $p$ (the tame inertia group). This means that for any prime $q\neq p$, the $q$-Sylow subgroup of $\Im\bar\rho_{E,p^2}$ must be cyclic. In particular, the Sylow $2$-subgroup of
$\Im\bar\rho_{E,p^{2}}$ must be cyclic. However, by \cite{AbelDivFields}, $\Q(E[p])/\Q$ must be nonabelian, and so the projectivization of $\Im\bar\rho_{E,p}$ must have a dihedral Sylow $2$-subgroup. This is a contradiction.
\end{proof}

The last case that remains here is the case that $\Im\bar\rho_{E,p}$ is contained in an exceptional group (i.e. the image in $\PGL_{2}(\FF_{p})$ falls into case 3 of Proposition~\ref{prop:ProjIm}).

\begin{prop}\label{prop:Exceptional}
Let $E/\QQ$ be an elliptic curve and let $p\geq 5$ be a prime such that $\Im\bar\rho_{E,p}$ is contained in an exceptional subgroup. Then $\QQ(E[p^2]) \ne \QQ(E[p],\zeta_{p^2})$. 
\end{prop}
\begin{proof}
Serre showed \cite[Section 8.4, Lemme 18]{SerreQuelques} that $\Im\bar\rho_{E,p}$ can only
be an exceptional subgroup if image in $\PGL_{2}(\FF_{p})$ is isomorphic to $S_{4}$,
$p \leq 13$ and $p \equiv 3 \text{ or } 5 \pmod{8}$. 

The elliptic curves with $\Im\bar\rho_{E,13}$ contained in an exceptional subgroup were determined in \cite{Balakrishnan2} and in \cite{RSZB} it was shown that for each such elliptic curve, $\Im\bar\rho_{E,13^{2}}$ contains all matrices $\equiv I \pmod{13}$, which implies that $\QQ(E[p^2]) \ne \QQ(E[p],\zeta_{p^2})$. 

For $p = 11$,
the elliptic curves with $\Im\bar\rho_{E,11}$ contained an exceptional subgroup were determined by Ligozat \cite[Proposition II.4.4.8.1]{Ligozat} and the only possibility is $j(E) = 0$. For such an elliptic curve,
we cannot have $\QQ(E[11^{2}]) = \QQ(E[11],\zeta_{11^{2}})$ by Proposition~\ref{prop:CM_preim}.

For $p = 5$, if $\Im\bar\rho_{E,5}$ is contained in an exceptional subgroup, then it either equals an exceptional subgroup, or is contained in a Borel or the normalizer of a split Cartan. The latter two cases are impossible by Proposition~\ref{prop:Main_NearCoin_Borel} and Proposition~\ref{prop:Main_NearCoin_splitcartan}.
If the image is the exceptional subgroup mod $5$, then a group theory computation with Magma shows that the mod $25$ image of Galois has RSZB label \texttt{25.625.36.1}, which is shown
to be impossible in \cite[Subsection 8.6]{RSZB}.
\end{proof}

In the case when $p=2$ and $3$, we search $\GL_{2}(\ZZ/p^2\ZZ)$ for subgroups that represent a near coincidence of level $(p^2,p)$ and then compute the maximal groups ordered by containment up to conjugation. Doing this yields curves with labels 
\begin{center}
\texttt{4.16.0.1}, \texttt{4.16.0.2}, and \texttt{4.48.0.3},
\end{center}
when $p=2$, and 
\begin{center}
\texttt{9.27.0.1}, \texttt{9.162.4.1}, and \texttt{9.324.10.1}
\end{center}
when $p=3.$
Thanks to \cite{lmfdb, RSZB, RZB} we know that \texttt{4.16.0.2}, \texttt{9.162.4.1}, and \texttt{9.324.10.1} do not have any rational points and so can be omitted.
To summarize, we have the following proposition:
\begin{prop}\label{prop:nearsummary}
Let $E/\QQ$ be an elliptic curve and let $p\in\ZZ$ be a prime such that $\QQ(E[p^2]) = \QQ(E[p],\zeta_{p^2}).$ Then, $p=2$ or $p=3$ and $E$ corresponds to a rational point on one of the modular curves with RSZB labels
\texttt{4.48.0.3}, \texttt{4.16.0.2}, or \texttt{9.27.0.1}.
\end{prop}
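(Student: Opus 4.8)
The plan is to bundle together the case analysis of this subsection into a single proof organized by the isomorphism type of $\Im\bar\rho_{E,p}$. First I would dispose of the complex multiplication case. If $E/\QQ$ has CM by an order $\mathcal{O}$ and $p \geq 5$, then Lemma~\ref{lem:Size_p^k} together with Proposition~\ref{prop:CM_preim} forces $[\QQ(E[p^2]) : \QQ(E[p])] = p^2$, whereas $[\QQ(E[p],\zeta_{p^2}) : \QQ(E[p])]$ divides $[\QQ(\zeta_{p^2}):\QQ(\zeta_p)] = p$, so $\QQ(E[p^2]) = \QQ(E[p],\zeta_{p^2})$ is impossible. For $p = 2$ and $p = 3$ the list of $p$-adic images of CM curves recorded in \cite[Tables 18--22]{RSZB} is finite, and one checks directly that none represents a near coincidence of level $(p^2,p)$ other than those already in the statement. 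Hence we may assume $E$ is non-CM.

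Second, I would treat $p \geq 5$ by running through the five possibilities for $G = \Im\bar\rho_{E,p}$ in Proposition~\ref{prop:ProjIm}. (a) If $\bar\rho_{E,p}$ is surjective, then $\bar\rho_{E,p^2}$ is surjective by Proposition~\ref{prop:SurjModp}, so $[\QQ(E[p^2]):\QQ(E[p])] = p^4 > p$ and there is no near coincidence. (b) If $G$ lies in a Borel, $E$ admits a rational cyclic $p$-isogeny, and using the known list of such $j$-invariants together with the mod-$p^2$ image data of \cite{RSZB} rules out the equality. (c) If $G$ lies in the normalizer of a split Cartan (but not the Cartan), the proposition proved above forces $\Im\bar\rho_{E,p^2}$ into the normalizer of a split Cartan of $\GL_2(\ZZ/p^2\ZZ)$, which for $p \geq 7$ contradicts the classification of \cite{BiluParentRebolledo} and for $p = 5$ is excluded by searching $\GL_2(\ZZ/25\ZZ)$ for the maximal (up to conjugacy) subgroups representing a $(25,5)$ near coincidence and checking them against the rational-point data of \cite{RSZB}. (d) If $G$ lies in the normalizer of a non-split Cartan, Proposition~\ref{prop:n=1} handles $p \geq 7$ and the same $\GL_2(\ZZ/25\ZZ)$ search handles $p = 5$. (e) If the projective image is exceptional (so $p = 5$ with $A_4$ or $S_4$, or $p \geq 7$), these curves are completely classified and \cite{RSZB} shows none yields a $(p^2,p)$ near coincidence.

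Third, for $p = 2$ and $p = 3$ I would proceed computationally: search $\GL_2(\ZZ/p^2\ZZ)$ for subgroups with surjective determinant that represent a near coincidence of level $(p^2,p)$, compute the maximal such subgroups up to conjugacy---obtaining \texttt{4.16.0.1}, \texttt{4.16.0.2}, \texttt{4.48.0.3} for $p = 2$ and \texttt{9.27.0.1}, \texttt{9.162.4.1}, \texttt{9.324.10.1} for $p = 3$---and then invoke \cite{lmfdb, RSZB, RZB} to discard those carrying no non-cuspidal rational point, which leaves exactly the three curves \texttt{4.48.0.3}, \texttt{4.16.0.2}, and \texttt{9.27.0.1} named in the statement.

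The main obstacle is that several of these cases cannot be closed by purely \emph{soft} arguments (surjectivity lifting, the ramification estimates behind Propositions~\ref{prop:TotallyRam}--\ref{prop:n=1}, or existing modular-curve classification theorems) and instead require the explicit database of rational points on modular curves of level dividing $25$ in \cite{RSZB}, together with a Magma computation identifying which subgroups represent near coincidences; in particular the $p = 5$ non-split-Cartan and exceptional cases, and the base cases $p = 2, 3$, depend essentially on these tables. Some care is also needed to verify that the list of maximal subgroups representing a near coincidence is complete up to conjugacy before the rational-point data is applied, since overlooking a maximal group could cause a genuine near coincidence to be missed.
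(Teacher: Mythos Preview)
Your proposal is correct and follows essentially the same route as the paper: Proposition~\ref{prop:nearsummary} is stated in the paper as a summary of the preceding case analysis in Section~\ref{subsec:n=1}, and your outline reproduces that analysis case by case (CM first, then the $\Im\bar\rho_{E,p}$ dichotomy of Proposition~\ref{prop:ProjIm} for $p\geq 5$, then the computational search in $\GL_2(\ZZ/p^2\ZZ)$ for $p=2,3$). One small point: your parenthetical ``(but not the Cartan)'' in case (c) is unnecessary and slightly misleading---the split-Cartan proposition you invoke applies to the full normalizer, and the genuinely abelian (Cartan) subcase is handled inside its proof by appealing to \cite{AbelDivFields}, so you should either drop the parenthetical or note separately that an abelian $\QQ(E[p])/\QQ$ would force $\QQ(E[p^2])/\QQ$ abelian, which is impossible for odd $p$.
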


\subsection{Proof of Theorem \ref{thm:Main_NearCoin} for prime power levels} 

To start this section, we push a little further in the cases where $p=2$ and $3.$ These searches were carried out using a Magma script, \texttt{NearCoin.m}, which can be found in \cite{code}. In both of these cases, we can have near coincidence between the $p^2$- and $p$-division fields, but what about the $p^3$- and $p^2$-division fields? 

So we search for groups that represent $(8,4)$ and $(27,9)$ coincidences. In the first case, we find that the maximal groups that represent an $(8,4)$ coincidence all have genus 1 or higher. Using the data in \cite{RZB}, we know that this means that there is no elliptic curve without complex multiplication that has these images, and we have already completely dealt with the CM case. 

When considering $(27,9)$ coincidences, the maximal groups are the ones with RSZB labels
\begin{center}\texttt{
27.729.43.1, 27.4374.280.4, 27.4374.280.1, 27.4374.280.3, 27.4374.280.2, 
27.8748.568.2, 27.8748.568.5, 27.8748.568.1, 27.8748.568.3.}
\end{center}
Again, \cite{RSZB} says that the corresponding modular curves have no non-cuspidal $\QQ$-rational points and so there are no elliptic curves over $\QQ$ with a $(27,9)$ near coincidence.

\begin{remark}
In \cite{RSZB}, it was shown that \texttt{27.729.43.1} cannot occur as the image of $\rho_{E,27}$ for any elliptic curve over $\QQ$ by writing down the canonical model of this modular curve in $\mathbb{P}^{42}$ and showing it has no mod $9$ points. The argument given above in Proposition~\ref{prop:TotallyRam} and Proposition~\ref{prop:n=1} can be modified to give a simpler proof that this modular curve has no rational points. In particular, one can show that if $E/\QQ$ has mod $9$ image contained in \texttt{9.27.0.1} (a supergroup of \texttt{27.729.43.1}), then ${\rm ord}_{3}(j(E)) \geq 7$. Since any elliptic curve with $j(E) \equiv 0 \pmod{3}$ has potentially supersingular reduction at $3$, the argument (using Theorem 1.1 of \cite{HansonSmith}) can proceed along similar lines.
\end{remark}

Next we prove the case of Theorem \ref{thm:Main_NearCoin} when $n\geq 2$ by induction.

\begin{prop}\label{prop:Near_Inductive_p_odd}
Suppose that $E/\QQ$ is an elliptic curve and $p>2$ is a prime such that $p^k$ divides $[\QQ(E[p^{n+1}]):\QQ(E[p^n])]$ for some $k\in\{1,2,3,4\}$ and $n\geq 1.$ Then, $p^k$ divides $[\QQ(E[p^{n+2}]):\QQ(E[p^{n+1}])]$
\end{prop}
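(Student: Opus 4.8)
\emph{Proof plan.} Write $G_m := \Im\bar\rho_{E,p^m}$, so that $G_m \cong \Gal(\QQ(E[p^m])/\QQ)$ and the restriction map $G_{m+1}\to G_m$ is reduction modulo $p^m$; set $K_m := \Ker(G_m\to G_{m-1})$, so that $|K_m| = [\QQ(E[p^m]) : \QQ(E[p^{m-1}])]$. The plan is to construct an injective map $K_{n+1}\hookrightarrow K_{n+2}$; since $|K_{n+1}|$ and $|K_{n+2}|$ will both turn out to be powers of $p$, the inequality $|K_{n+1}|\le|K_{n+2}|$ that results then immediately upgrades to the divisibility claimed.

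First I would record the structure of the congruence kernels. For $p$ odd and $m\ge1$, the kernel $\Phi_m := \Ker\big(\GL_2(\ZZ/p^{m+1}\ZZ)\to\GL_2(\ZZ/p^m\ZZ)\big)$ is exactly the set of matrices $I + p^m M$ with $M \in M_2(\ZZ/p\ZZ)$, and $I + p^m M\mapsto M$ identifies $\Phi_m$ with the additive group $(M_2(\ZZ/p\ZZ),+)\cong(\ZZ/p\ZZ)^4$, because the cross term $p^{2m}MM'$ vanishes modulo $p^{m+1}$ for $m\ge1$. Since $K_{n+1}\le\Phi_n$ and $K_{n+2}\le\Phi_{n+1}$, both have $p$-power order (at most $p^4$).

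The key point is the $p$-th power congruence: for $p$ odd and $n\ge1$,
\[
(I + p^n M)^p \equiv I + p^{n+1} M \pmod{p^{n+2}} \qquad (M \in M_2(\ZZ/p^2\ZZ)).
\]
I would prove this from the binomial expansion $\sum_{i\ge0}\binom{p}{i}(p^nM)^i$: the $i=1$ term is $p^{n+1}M$; the $i=2$ term $\binom{p}{2}p^{2n}M^2$ is divisible by $p^{2n+1}$ since $p\mid\binom{p}{2}$ for odd $p$ (this is where $p>2$ is used), and $2n+1\ge n+2$; and every term with $i\ge3$ is divisible by $p^{3n}\ge p^{n+2}$ (using a factor $p$ from $\binom{p}{i}$ when $i<p$, and $p\ge3$ when $i=p$). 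Granting this, let $g\in K_{n+1}$ and write $g = I + p^n\bar M$ with $\bar M\in M_2(\ZZ/p\ZZ)$. Choose a preimage $\tilde g\in G_{n+2}$ under the surjection $G_{n+2}\to G_{n+1}$; necessarily $\tilde g = I + p^n M$ with $M\equiv\bar M\pmod p$, so the congruence above gives $\tilde g^p\equiv I + p^{n+1}\bar M\pmod{p^{n+2}}$, an expression depending only on $g$. Set $\psi(g):=\tilde g^p$. Then $\psi(g)\in G_{n+2}$ (a group) and $\psi(g)\equiv I\pmod{p^{n+1}}$, so $\psi(g)\in G_{n+2}\cap\Phi_{n+1} = K_{n+2}$; and in the coordinates of the previous paragraph $\psi$ is the identity $\bar M\mapsto\bar M$, hence an injective homomorphism $K_{n+1}\hookrightarrow K_{n+2}$.

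It follows that $|K_{n+1}| = |\psi(K_{n+1})|\le|K_{n+2}|$. Writing $|K_{n+1}| = p^j$ and $|K_{n+2}| = p^{j'}$, this says $j\le j'$; so if $p^k\mid[\QQ(E[p^{n+1}]):\QQ(E[p^n])] = p^j$ then $k\le j\le j'$, whence $p^k\mid p^{j'} = [\QQ(E[p^{n+2}]):\QQ(E[p^{n+1}])]$, as desired. The step that needs the most care is the $p$-th power congruence: one must confirm that the single factor of $p$ available from $\binom{p}{2}$ (which is why $p=2$ is excluded — for $p=2$, $n=1$ the congruence fails) together with the inequalities $2n+1\ge n+2$ and $3n\ge n+2$ (valid precisely because $n\ge1$) are enough to annihilate all higher-order terms modulo $p^{n+2}$.
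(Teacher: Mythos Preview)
Your proposal is correct and follows essentially the same approach as the paper: lift an element of the mod $p^{n}$ congruence kernel inside $\Im\bar\rho_{E,p^{n+1}}$ to $\Im\bar\rho_{E,p^{n+2}}$, take its $p$-th power, and use the binomial expansion $(I+p^n\tilde X)^p\equiv I+p^{n+1}\tilde X\pmod{p^{n+2}}$ (valid for $p$ odd and $n\ge 1$) to land in the mod $p^{n+1}$ congruence kernel; the resulting map $\bar M\mapsto\bar M$ is visibly injective, giving $|K_{n+1}|\le|K_{n+2}|$. You are in fact a bit more explicit than the paper about well-definedness of the map and about why each higher binomial term vanishes, but the argument is the same.
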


\begin{proof}

Assume that $n\geq 1$ and $p^k$ divides $[\QQ(E[p^{n+1}]) : \QQ(E[p^n])]$ for some $k\in\{1,2,3,4\}$ This means that the set 
\[
S =\{A\in\Im\bar\rho_{E,p^{n+1}} \colon A\equiv I\bmod{p^n}\}
\]
must have size at least $p^k.$ Next, we let 
\[
\tilde{S} =\{A\in\Im\bar\rho_{E,p^{n+2}} \colon A\equiv I\bmod{p^{n+1}}\}.
\]
Our goal now is to show that there is an injective homomorphism from $S$ to $\tilde{S}.$ Doing this would allow us to conclude that $|S|$ divides $|\tilde{S}|$. 

We can represent an element of $S$ in the form $I + p^{n} X$ with $X$ in the additive group
$M_{2}(\Z/p\Z)$ of $2 \times 2$ matrices. Define $\phi : S \to \tilde{S}$ by $\phi(I + p^{n} X) = I + p^{n+1} X$. It is straightforward to see that this formula defines an injective homomorphism,
but it is not immediately clear that if $I + p^{n} X \in S$, then $I + p^{n+1} X \in \tilde{S}$.
We now justify this. 

If $I+p^nX\in S$ for some $X\in M_2(\ZZ/p\ZZ)$, then there is a $\sigma_0\in\Gal(\QQ(E[p^{n+1}]/\QQ))$ such that $\bar\rho_{E,p^{n+1}}(\sigma_0) = I+p^nX$. Further, there must be a $\sigma\in\Gal(\QQ(E[p^{n+2}])/\QQ)$ such that $\sigma\big|_{\QQ(E[p^{n+1}])} = \sigma_0$. In this case, since $\bar\rho_{E,p^{n+1}}(\sigma_0) \equiv I + p^nX$ we have that 
\[
\bar\rho_{E,p^{n+2}}(\sigma) = I+p^n\tilde{X}
\] 
for some $\tilde{X}\in M_2(\ZZ/p^2\ZZ)$ such that $\tilde{X} \equiv X \bmod p$. Then it must be that 
\begin{align}
\notag \bar\rho_{E,p^{n+2}}(\sigma^p) & \equiv (I+p^n\tilde{X})^p \bmod p^{n+2}\\
&\equiv I+p\cdot p^n\tilde{X}+\frac{1}{2}p(p-1)p^{2n}\tilde{X}^2+\cdots \bmod p^{n+2}\label{eq:p_neq_2}\\
\notag &\equiv I + p^{n+1} \tilde{X} \bmod p^{n+2}\\
\notag &\equiv I + p^{n+1} X \bmod p^{n+2}.
\end{align}
Thus $I+p^{n+1}X\in \tilde{S}$.

Since $|S|$ divides $|\tilde{S}|$, this forces $\QQ(E[p^{n+2}])/\QQ(E[p^{n+1}])$ to have degree at least $|S|$ and so
\[
p^k\mid[\QQ(E[p^{n+2}]):\QQ(E[p^{n+1}])].
\]
\end{proof}

\begin{remark}
Notice that if $p\geq 3$, then the statement that $p^2\mid [\QQ(E[p^{n+1}]):\QQ(E[p^n])]$ is equivalent to the fact that $\QQ(E[p^{n+1}]) \neq \QQ(E[p^n],\zeta_{p^{n+1}}).$ This is because the field extension $\QQ(E[p^{n+1}])/\QQ(E[p^n])$ is a Galois extension whose Galois group is isomorphic to a subgroup of the additive group $M_2(\ZZ/p\ZZ)$ of $2\times2$ matrices with entries in $\ZZ/p\ZZ$. The group $M_2(\ZZ/p\ZZ)$ has order $p^4$ and so a priori $[\QQ(E[p^{n+1}]):\QQ(E[p^n])]=p^k$ for some $k\in\{0,1,2,3,4\}.$ We omit the case when $k=0$ in Proposition \ref{prop:Near_Inductive_p_odd} since it is uninteresting. 

Next, we notice that 
\[
[\QQ(E[p^{n+1}]) : \QQ(E[p^n])] = [\QQ(E[p^{n+1}]) : \QQ(E[p^n],\zeta_{p^{n+1}}) ] [\QQ(E[p^n],\zeta_{p^{n+1}}) : \QQ(E[p^n])].
\]
But, by Theorem \ref{thm:MainCoin}, in this case $\zeta_{p^{n+1}}\not\in \QQ(E[p^n])$ and so $[\QQ(E[p^n],\zeta_{p^{n+1}}) : \QQ(E[p^n])] =p$.
Bringing it all together we see that
\begin{align*}
p^{2} \mid [\Q(E[p^{n}) : \Q(E[p^{n-1}])] &\iff p \mid [\Q(E[p^{n}]) : \Q(E[p^{n-1}], \zeta_{p^{n}})]\\
&\iff [\Q(E[p^{n}]) : \Q(E[p^{n-1}], \zeta_{p^{n}})] \ne 1.
\end{align*}
\end{remark}

Examining the proof of Proposition~\ref{prop:Near_Inductive_p_odd}, the term
$\frac{1}{2} p(p-1) p^{2n} \tilde{X}^{2}$ is $\equiv 0 \pmod{p^{n+2}}$ if $(p,n) \ne (2,1)$ but could fail if $p = 2$ and $n = 1$. With this in mind, we immediately get the following corollary.

\begin{corollary}\label{cor:Near_Inductive_p_even}
Suppose that $p=2$ and $E/\QQ$ is an elliptic curve such that $p^k$ divides $[\QQ(E[p^{n+1}]):\QQ(E[p^n])]$ for some $k\in\{1,2,3,4\}$ and $n\geq 2.$ Then, $p^k$ divides $[\QQ(E[p^{n+2}]):\QQ(E[p^{n+1}])]$
\end{corollary}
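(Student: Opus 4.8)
The plan is to observe that the proof of Proposition~\ref{prop:Near_Inductive_p_odd} was written so that the hypothesis $p > 2$ is used in exactly one place: the binomial expansion \eqref{eq:p_neq_2}, where we argued that $\bar\rho_{E,p^{n+2}}(\sigma^p) \equiv I + p^{n+1}\tilde{X} \pmod{p^{n+2}}$. So I would re-run that argument verbatim with $p = 2$ and simply check that this expansion step still succeeds under the stronger hypothesis $n \ge 2$.

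Concretely, suppose $p^k \mid [\QQ(E[2^{n+1}]):\QQ(E[2^n])]$ with $n \ge 2$, so that $S = \{A \in \Im\bar\rho_{E,2^{n+1}} \colon A \equiv I \bmod 2^n\}$ has size at least $2^k$. Given $I + 2^n X \in S$, lift the corresponding $\sigma_0 \in \Gal(\QQ(E[2^{n+1}])/\QQ)$ to some $\sigma \in \Gal(\QQ(E[2^{n+2}])/\QQ)$ as before, and write $\bar\rho_{E,2^{n+2}}(\sigma) = I + 2^n\tilde{X}$ with $\tilde{X} \in M_2(\ZZ/4\ZZ)$ reducing to $X \bmod 2$. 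Then
\[
(I + 2^n \tilde{X})^2 = I + 2^{n+1}\tilde{X} + 2^{2n}\tilde{X}^2,
\]
and the cross term $2^{2n}\tilde{X}^2$ is $\equiv 0 \pmod{2^{n+2}}$ precisely when $2n \ge n+2$, i.e. when $n \ge 2$ — which is exactly the hypothesis of the corollary. (This is the step that breaks for $p = 2$, $n = 1$, where $2^{2n} = 2^{n+1}$.) Hence $\bar\rho_{E,2^{n+2}}(\sigma^2) \equiv I + 2^{n+1}\tilde{X} \pmod{2^{n+2}}$, so $I + 2^{n+1}\tilde{X} \in \tilde{S} := \{A \in \Im\bar\rho_{E,2^{n+2}} \colon A \equiv I \bmod 2^{n+1}\}$, and the map $S \to \tilde{S}$ thus produced gives $|\tilde{S}| \ge |S| \ge 2^k$, whence $2^k \mid [\QQ(E[2^{n+2}]):\QQ(E[2^{n+1}])]$.

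There is essentially no obstacle: the corollary reduces to the single arithmetic fact that $2n \ge n+2$ once $n \ge 2$. The only point requiring a moment's care is confirming that no other step of Proposition~\ref{prop:Near_Inductive_p_odd} secretly relied on $p$ being odd — in particular that the sets $S$, $\tilde{S}$, the existence of the lift $\sigma$ of $\sigma_0$, and the passage from $|\tilde S| \ge |S|$ to the divisibility statement are all insensitive to the prime — and a quick pass through the argument shows they are. This is precisely the observation already flagged in the remark preceding the statement, so the corollary follows immediately.
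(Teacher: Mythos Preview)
Your proposal is correct and follows exactly the same approach as the paper: the paper simply observes, immediately before stating the corollary, that the term $\frac{1}{2}p(p-1)p^{2n}\tilde{X}^{2}$ in the binomial expansion \eqref{eq:p_neq_2} is $\equiv 0 \pmod{p^{n+2}}$ whenever $(p,n)\neq(2,1)$, and then records the corollary as an immediate consequence. Your write-up spells out the verification $2n \ge n+2$ for $p=2$, $n\ge 2$ explicitly, which is precisely the content of that observation.
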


Thus, we find ourselves at the end. The work of \ref{subsec:n=1} together with Proposition \ref{prop:Near_Inductive_p_odd} and Corollary \ref{cor:Near_Inductive_p_even} completes the proof of Theorem \ref{thm:Main_NearCoin}.

\section{Nilpotent division fields of prime level}\label{sec:PrimeLevel}\

We are now ready to start classifying when the division fields of elliptic curves can give us nilpotent extensions of $\QQ.$ Before starting the classification in earnest, we will quickly remind the reader of some basic facts about nilpotent groups.

\subsection{Nilpotent Groups}
This subsection will only cover the very basics of subgroups series and nilpotent groups. 
Readers interested in more context should see \cite{ConradSubSeriesI,ConradSubSeriesII,DF,Isaacs}. 

\begin{definition}
	Let $G$ be a group. An ascending series 
	\[
	\{e\} = G_0 \subseteq G_1\subseteq G_2\subseteq\dots\subseteq G
	\]
	is called a \it{\bfseries central series} if for all $i$, $G_i\triangleleft G$ and $G_{i+1}/G_i\subseteq Z(G/G_i).$ Here $Z(G)$ is the center of $G$. 
	A descending series
	\[
	G = G_0\supseteq G_2 \supseteq G_2 \supseteq \dots \supseteq \{e\}
	\]
	is called a {\it \bfseries central series} if $G_i\triangleleft G$ and $G_i/G_{i+1}\subseteq Z(G/G_{i+1}).$
\end{definition}

\begin{definition}
	A group $G$ is called {\bfseries nilpotent} if it has a central series.
\end{definition}

\begin{theorem}\label{thm:Nilp}{\rm \cite[Theorem 1.26]{Isaacs}}
Let $G$ be a finite non-trivial group. The following are equivalent:
\begin{enumerate}
	\item $G$ is a nilpotent group.
	\item Every Sylow subgroup of $G$ is normal. 
	\item $G$ is the direct product of its Sylow subgroups.
	\item If $d$ divides $|G|$, then $G$ has a normal subgroup of order $d$.
\end{enumerate}
\end{theorem}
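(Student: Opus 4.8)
The statement to prove is Theorem~\ref{thm:Nilp}, the standard characterization of finite nilpotent groups. The plan is to prove the cycle of implications $(1) \Rightarrow (2) \Rightarrow (3) \Rightarrow (4) \Rightarrow (1)$, since this is the most economical route and each arrow is short.

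For $(1) \Rightarrow (2)$, I would use the fact that in a nilpotent group every proper subgroup is properly contained in its normalizer (the ``normalizer grows'' property, provable by induction on the nilpotency class using the upper central series: if $H < G$ and $Z_i$ is the first term of the upper central series not contained in $H$, then $Z_i$ normalizes $H$ but is not inside it). Given a Sylow $p$-subgroup $P$, set $N = N_G(P)$; a standard argument shows $N$ is self-normalizing (since $P$ is characteristic in $N$, hence $N_G(N) \le N_G(P) = N$), so the normalizer-growth property forces $N = G$, i.e.\ $P \triangleleft G$. For $(2) \Rightarrow (3)$: if $P_1, \dots, P_r$ are the Sylow subgroups for the distinct primes dividing $|G|$, each is normal, distinct ones intersect trivially (coprime orders), and by counting elements the product $P_1 \cdots P_r$ has order $|G|$; normality and trivial intersection then give $G \cong P_1 \times \cdots \times P_r$. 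For $(3) \Rightarrow (4)$: if $d \mid |G|$, write $d = \prod p_i^{a_i}$; inside each finite $p$-group $P_i$ there is a normal subgroup of every order dividing $|P_i|$ (a $p$-group has a chief series with factors of order $p$), in particular one $Q_i$ of order $p_i^{a_i}$, and $Q_1 \times \cdots \times Q_r$ is then a normal subgroup of $G$ of order $d$. Finally $(4) \Rightarrow (1)$: hypothesis (4) immediately gives, for each prime $p \mid |G|$ with $p^a \| |G|$, a normal subgroup of order $p^a$, which is then the unique (hence the) Sylow $p$-subgroup; so every Sylow subgroup is normal, we are back at condition (2)/(3), and a finite direct product of $p$-groups is nilpotent because $p$-groups are nilpotent (nontrivial $p$-groups have nontrivial center, so the upper central series terminates) and a direct product of nilpotent groups is nilpotent.

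The main obstacle — really the only nontrivial input — is the normalizer-growth lemma used in $(1)\Rightarrow(2)$, so I would either prove it inline by the short induction sketched above or simply cite it. Everything else is elementary counting and the two facts that $p$-groups have nontrivial center and that $p$-groups possess normal subgroups of each order dividing the group order (both standard, and also citable from \cite{Isaacs} or \cite{DF}). Since the paper only needs this theorem as a black box for later sections, I would keep the write-up brief, citing \cite{Isaacs} for the normalizer-growth property and the structure of $p$-groups, and presenting the four implications in a couple of lines each.
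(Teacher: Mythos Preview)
Your proof is correct and follows the standard textbook route; there is nothing wrong with any of the four implications as you have sketched them. However, there is nothing to compare against: the paper does not give its own proof of this theorem. It is stated as a quotation from \cite[Theorem~1.26]{Isaacs} and used as a black box, with no proof environment following the statement. So your proposal is not an alternative to the paper's argument --- it would simply be supplying a proof where the paper chose to cite one.

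If you want to include a proof anyway, what you have written is fine and self-contained; the normalizer-growth lemma is indeed the only step requiring a moment's thought, and your inline induction on the upper central series is the usual way to handle it. But given that the paper treats this result as standard background and cites a reference, the most faithful thing to do is to leave it unproven and cite \cite{Isaacs} exactly as the authors do.
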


An immediate consequence of this result is that every abelian group and every finite $p$-group is nilpotent.

\begin{prop}\label{prop:NilpIsClosed}{\rm \cite[Theorem 5.7]{ConradSubSeriesI}}
Nilpotency is closed under subgroups, quotients, and direct products. 
\end{prop}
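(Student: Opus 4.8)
I would prove all three parts uniformly via the \emph{lower central series}. Set $\gamma_1(G) = G$ and $\gamma_{i+1}(G) = [\gamma_i(G), G]$; this is a descending central series in the sense defined above, and the standard fact I will use is that $G$ is nilpotent exactly when $\gamma_{c+1}(G) = \{e\}$ for some $c\in\ZZ^{+}$, the smallest such $c$ being the nilpotency class. Each closure statement then reduces to tracking $\gamma_i$ through the operation in question.

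For a subgroup $H \le G$, induction on $i$ gives $\gamma_i(H) \subseteq \gamma_i(G)$ (base case $\gamma_1(H) = H \subseteq G$, inductive step $[\gamma_i(H),H] \subseteq [\gamma_i(G),G]$), so $\gamma_{c+1}(G) = \{e\}$ forces $\gamma_{c+1}(H) = \{e\}$. For a quotient $\pi \colon G \twoheadrightarrow G/N$, surjectivity together with the fact that $\pi$ carries commutators to commutators gives $\pi(\gamma_i(G)) = \gamma_i(G/N)$ by induction, so again $\gamma_{c+1}(G) = \{e\}$ forces $\gamma_{c+1}(G/N) = \{e\}$. For a direct product it is enough, by induction on the number of factors, to handle $G_1 \times G_2$; from the identity $[(a,b),(c,d)] = ([a,c],[b,d])$ one checks inductively that $\gamma_i(G_1\times G_2) = \gamma_i(G_1) \times \gamma_i(G_2)$, so if $G_1$ and $G_2$ both have class at most $c$ then so does $G_1 \times G_2$.

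I do not expect any real obstacle; the only slightly delicate points are that the subgroup case yields the containment $\gamma_i(H)\subseteq\gamma_i(G)$ rather than an equality (which is all that is needed) and the bookkeeping in the direct-product identity. Since every application in this paper is to finite groups, an alternative is to argue straight from Theorem~\ref{thm:Nilp}: a surjection sends a Sylow $p$-subgroup of $G$ to a Sylow $p$-subgroup of the quotient, which stays normal because $G$ is nilpotent; for $H \le G$, the intersection $P \cap H$ of a normal Sylow $p$-subgroup $P$ of $G$ with $H$ is a normal Sylow $p$-subgroup of $H$; and the Sylow $p$-subgroup of $G_1\times G_2$ is the product of the Sylow $p$-subgroups of $G_1$ and $G_2$, hence normal --- so in each case all Sylow subgroups are normal, and the group is nilpotent.
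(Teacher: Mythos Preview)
Your argument is correct; the lower-central-series approach is the standard one, and the containment/equality bookkeeping you describe is exactly what is needed. The alternative via Theorem~\ref{thm:Nilp} is also fine once one checks (as you implicitly do) that a normal Sylow $p$-subgroup $P\trianglelefteq G$ intersects any subgroup $H$ in a normal Sylow $p$-subgroup of $H$, which follows from $[H:P\cap H]=[PH:P]\mid[G:P]$.

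As for comparison with the paper: there is nothing to compare. The paper does not prove this proposition at all; it simply cites \cite[Theorem 5.7]{ConradSubSeriesI} and moves on. Your write-up would serve perfectly well if one wanted to make the paper self-contained on this point.
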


In general, given a group $G$ and a nilpotent normal subgroup $N$, it is not true that $G/N$ nilpotent implies that $G$ is nilpotent. However if $N \leq Z(G)$, this follows from Theorem 5.13 of \cite{ConradSubSeriesI}.

\begin{prop}\label{prop:TrivialCenter}
If $G$ is a finite non-trivial group such that $Z(G) = \{e\}$, then $G$ is not nilpotent.
\end{prop}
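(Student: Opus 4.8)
The plan is to argue by contraposition: I will show that a finite non-trivial nilpotent group has non-trivial center, which is exactly the statement we want. The only tool needed is the definition of a central series together with the fact (Theorem~\ref{thm:Nilp} or rather the definition preceding it) that a nilpotent group possesses an ascending normal central series.

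\textbf{Key steps.} Suppose, for contradiction, that $G$ is finite, non-trivial, nilpotent, and $Z(G) = \{e\}$. By definition of nilpotency, $G$ admits a normal central series
\[
\{e\} = G_0 \subseteq G_1 \subseteq G_2 \subseteq \dots \subseteq G_n = G,
\]
with each $G_i \triangleleft G$ and $G_{i+1}/G_i \subseteq Z(G/G_i)$. Since $G \neq \{e\}$, the set of indices $i$ with $G_i \neq \{e\}$ is non-empty; let $j$ be its minimum, so that $G_{j-1} = \{e\}$ and $G_j \neq \{e\}$. Applying the central-series condition at level $j-1$ gives
\[
G_j = G_j/G_{j-1} \subseteq Z(G/G_{j-1}) = Z(G/\{e\}) = Z(G).
\]
Hence $Z(G) \supseteq G_j \supsetneq \{e\}$, contradicting the assumption $Z(G) = \{e\}$. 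Therefore no such series can terminate at $G$, and $G$ is not nilpotent.

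\textbf{Expected obstacle.} There is essentially no real obstacle here; the only point requiring a little care is the bookkeeping of the indices in the central series (ensuring that a first non-trivial term $G_j$ exists, which uses $G \neq \{e\}$, and that it sits inside $Z(G)$ via the condition one level down). One could alternatively phrase the same argument via the upper central series $Z_0 = \{e\}$, $Z_{i+1}/Z_i = Z(G/Z_i)$: if $Z(G) = \{e\}$ then $Z_1 = \{e\}$ and inductively $Z_i = \{e\}$ for all $i$, so the upper central series never reaches the non-trivial group $G$, which again contradicts nilpotency. I would include whichever version reads most cleanly alongside the definitions already stated in the paper.
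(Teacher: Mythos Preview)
Your proof is correct and follows essentially the same idea as the paper's one-line argument: the paper simply notes that if $|Z(G)|=1$ then no term in an ascending central series can have order larger than $1$, which is exactly what your analysis of the first non-trivial term $G_j$ (or, equivalently, your upper-central-series variant) makes explicit.
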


\begin{proof}
Suppose $G$ is a finite nontrivial group that is nilpotent. Then, by Theorem \ref{thm:Nilp} part 3, $G$ is the direct products of its $p$-Sylow subgroups. A classical result in group theory is that $p$-group have nontrivial centers and so $G$ must have a nontrivial center.
\end{proof}

\begin{example}\label{ex:WhenDnIsNilp}
	Let $D_n$ be the dihedral group of order $2n$. More specifically, let \[
	 D_n = \langle r,s \mid r^n = s^2 =e, srs^{-1} = r^{-1}\rangle.
	 \]
	 A classical result is that $D_n$ is nilpotent exactly when $n = 2^k$ for some $k\geq 2$. In order to keep the statement of Proposition \ref{prop:ProjIm} as clean as possible, we will need to adopt the convention that $(\ZZ/2\ZZ)^2$ is a dihedral group.
\end{example}

\begin{example}
	Let $p$ be a prime. 
	The goal of this example is to show that $\SL_2(\ZZ/p\ZZ)$ is not nilpotent. 
	A simple computation shows that $Z(\SL_2(\ZZ/p\ZZ)) = \langle -I \rangle$ and by definition $\SL_2(\ZZ/p\ZZ)/Z(\SL_2(\ZZ/p\ZZ))$ is $\PSL_2(\ZZ/p\ZZ)$. 
	A classical result \cite{JordanPSL} is that $\PSL_2(\ZZ/p\ZZ)$ is simple for all $p\geq5$. 
	Since the center of a group is always normal and $\PSL_2(\ZZ/p\ZZ)$ is clearly non-abelian, it must be that $Z(\PSL_2(\ZZ/p\ZZ))$ is trivial. Thus, $\PSL_2(\ZZ/p\ZZ)$ is not nilpotent. 
	This together with Proposition \ref{prop:NilpIsClosed} shows that $\SL_2(\ZZ/p\ZZ)$ is not nilpotent when $p\geq 5$. 
	The cases when $p=2$ and $p=3$ can be easily checked by hand. 
\end{example}

Before proceeding, we remind the reader of the following definition:
\begin{definition}
	Let $K/k$ be a Galois extension of fields. We say that $K/k$ is a {\it \bfseries nilpotent extension} if $\Gal({K}/k)$ is a nilpotent group. When the base field $k$ is clear from context, we will just say that $K$ is a {\bfseries nilpotent field} for brevity.
\end{definition}

\subsection{Classification of nilpotent division fields of prime level}
Step one in the process of determining when an elliptic curve $E/\QQ$ can have a nilpotent $n$-division field, is determining when the $p$-division fields can be nilpotent extensions of $\QQ$. Proposition \ref{prop:NilpIsClosed} tells us that if $\QQ(E[n])/\QQ$ is nilpotent, then $\QQ(E[d])/\QQ$ is nilpotent for all
$d \mid n$. Moreover, if $n = p_{1}^{a_{1}} \cdots p_{k}^{a_{k}}$
is the prime factorization of $n$, then $\QQ(E[n])/\QQ$ is nilpotent if and only if $\QQ(E[p_{i}^{a_{i}}])/\QQ$ is nilpotent for all $i$. To see why this is true, one only needs to recall the Galois correspondence as well as the fact that nilpotency is preserved under subgroups, quotients, and direct products.

For this reason, we start by studying $\QQ(E[p])/\QQ$ and use that information to understand what happens at level $p^2$ and further up the $p$-adic tower.

To that end, we need a way to divide up the subgroups of $\GL_2(\ZZ/p\ZZ)$ so that we can study them. Fortunately, Proposition \ref{prop:ProjIm} gives us exactly what we need.

\begin{remark}
\label{rem:nilprojim}
If $G$ is a subgroup of $\GL_{2}(\ZZ/p\ZZ)$, then $G$ is nilpotent if and only if
its image in $\PGL_{2}(\ZZ/p\ZZ)$ is nilpotent. The reason is as follows. If $G$ is nilpotent,
then its image in $\PGL_{2}(\ZZ/p\ZZ)$ is a quotient of $G$ and is hence nilpotent.
Conversely, if $G \leq \GL_{2}(\ZZ/p\ZZ)$ and the image of $G$ in $\PGL_{2}(\ZZ/p\ZZ)$
is nilpotent, then the image of $G$ in $\PGL_{2}(\ZZ/p\ZZ)$ is the quotient $G/N$,
where $N$ is the set of scalar multiples of the identity in $G$. This subgroup $N \leq Z(G)$
and by Theorem 5.13 of \cite{ConradSubSeriesI}, it follows that $G$ is nilpotent, since
$N$ can be extended into a central series.
\end{remark}

The following result gives a classification of when an admissible subgroup of $\GL_{2}(\ZZ/p\ZZ)$ is nilpotent.

\begin{proposition}\label{prop:ProjImIsDi}
Let $p$ be a prime and let $G$ be an admissible subgroup of 
$\GL_2(\ZZ/p\ZZ)$. Then $G$ is nilpotent if and only if $G$ is abelian,
or the image of $G$ in $\PGL_2(\ZZ/p\ZZ)$ is isomorphic to $D_{2^k}$ for some $k\geq2$. Further, if $p$ is odd, then $G$ is either contained in the normalizer of a split or non-split Cartan subgroup of $\GL_2(\ZZ/p\ZZ)$.
\end{proposition}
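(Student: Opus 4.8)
The plan is to run through the cases provided by Proposition~\ref{prop:ProjIm} and eliminate or constrain each one using nilpotency, exactly as laid out in the paragraphs immediately preceding the statement. First I would dispose of the case $p \mid |G|$: if $G \supseteq \SL_2(\ZZ/p\ZZ)$ then $G$ is not nilpotent because $\SL_2(\ZZ/p\ZZ)$ is not (Proposition~\ref{prop:NilpIsClosed} together with the Example computing $Z(\PSL_2)$); otherwise $G$ lies in a Borel subgroup. In the Borel case the $p$-Sylow is cyclic generated by a single unipotent $A = \smallmat{1}{1}{0}{1}$ (after conjugation), and part~(3) of Theorem~\ref{thm:Nilp} (direct product of Sylow subgroups) forces every $B \in G$ to commute with $A$; a direct $2\times 2$ computation of $AB$ versus $BA$ shows the diagonal entries of $B$ must be equal, so $\det(G) \subseteq (\ZZ/p\ZZ)^{\times 2}$, and admissibility ($\det G = (\ZZ/p\ZZ)^\times$) forces $p = 2$, where the group is $\langle \smallmat{1}{1}{0}{1}\rangle$, which is abelian. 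So in every nilpotent case with $p \mid |G|$ the group is abelian.

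Next I would handle $p \nmid |G|$ via the three subcases of Proposition~\ref{prop:ProjIm}. Case~(1), $H$ cyclic and $G$ inside a Cartan: Cartan subgroups of $\GL_2(\ZZ/p\ZZ)$ are abelian, so $G$ is abelian and we are done. Case~(3), $H \cong A_4, S_4, A_5$: each of these has trivial center, so by Proposition~\ref{prop:TrivialCenter} $H$ is not nilpotent, hence neither is $G$ — this case cannot arise. Case~(2), $H$ dihedral and $G$ inside the normalizer of a Cartan but not the Cartan: here $H \cong D_m$ where $m$ is the order of the image of the Cartan part, and by Example~\ref{ex:WhenDnIsNilp} $H$ (hence $G$) is nilpotent if and only if $m = 2^k$ for some $k \geq 2$ (using the stated convention that $(\ZZ/2\ZZ)^2$ counts as a dihedral group, cf.\ Remark~\ref{remark:VIsDihedral}). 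Assembling: a nilpotent admissible $G$ is either abelian (Borel/Cartan/cyclic-projective-image cases) or has projective image $D_{2^k}$ with $k \geq 2$, and in the latter case $G$ sits in the normalizer of a split or non-split Cartan. For the final sentence of the proposition, when $p$ is odd the $p \mid |G|$ sub-analysis only produced $p = 2$, so for odd $p$ either $G$ is inside a Cartan (a special case of the normalizer of a Cartan) or, by case~(2), inside the normalizer of a Cartan; the exceptional and Borel-with-$p\mid|G|$ possibilities are gone.

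I do not expect a genuine obstacle here — the statement is essentially a bookkeeping consolidation of the preceding discussion, and every ingredient (Proposition~\ref{prop:ProjIm}, Theorem~\ref{thm:Nilp}, Proposition~\ref{prop:NilpIsClosed}, Proposition~\ref{prop:TrivialCenter}, Example~\ref{ex:WhenDnIsNilp}, and the explicit Borel computation) has already been supplied. The one point requiring a little care is the convention about $(\ZZ/2\ZZ)^2$ being treated as dihedral so that the split-Cartan-in-$\GL_2(\ZZ/3\ZZ)$ example of Remark~\ref{remark:VIsDihedral} is correctly classified under case~(2); I would make sure the proof explicitly invokes that convention when citing Example~\ref{ex:WhenDnIsNilp} rather than silently assuming $m \geq 3$. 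A secondary small point is to note that admissibility is genuinely used (only) in the Borel case to kill $p$ odd; in the Cartan-normalizer case nilpotency alone does the work.
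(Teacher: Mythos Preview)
Your proposal is correct and follows essentially the same route as the paper: the proposition is a summary of the case analysis carried out in the paragraphs immediately preceding it, and you reproduce that analysis (Proposition~\ref{prop:ProjIm} case split, the Borel commutator computation forcing $p=2$, Proposition~\ref{prop:TrivialCenter} for the exceptional images, and Example~\ref{ex:WhenDnIsNilp} for the dihedral projective image) in the same order and with the same tools. Your explicit treatment of case~(1) (Cartan $\Rightarrow$ abelian) and your justification of the ``Further'' clause for odd $p$ are slightly more spelled out than in the paper, and your flag about the $(\ZZ/2\ZZ)^2$ convention is apt---that is indeed the only place where the bookkeeping needs care.
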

\begin{proof}
If $G$ is abelian, then it must be nilpotent. Remark~\ref{rem:nilprojim} shows that if the image
of $G$ in $\PGL_{2}(\ZZ/p\ZZ)$ is isomorphic to $D_{2^{k}}$ (a $2$-group) then $G$ is nilpotent.

Now we assume that $G$ is nilpotent and consider the cases based on Proposition~\ref{prop:ProjIm}. First, if $G$ contains $\SL_2(\ZZ/p\ZZ)$, then $G$ cannot be nilpotent since $\SL_2(\ZZ/p\ZZ)$ is not nilpotent.

Next, suppose that $G$ is an admissible nilpotent group such that $p \mid |G|$, and $G$ is contained in a Borel subgroup of $\GL_2(\ZZ/p\ZZ)$. Conjugating $G$ if necessary, we may assume $G$ is contained in the set of upper triangular matrices. The set of upper triangular matrices contains
a unique subgroup of order $p$, namely the cyclic group generated by
\[
A = \begin{pmatrix}
1&1\\0&1
\end{pmatrix}.
\]
Since we assumed that $G$ was nilpotent, we have from part (3) of Theorem \ref{thm:Nilp} every element in $G$ must commute with $A.$ 
Let \[
B = \begin{pmatrix}
	a&b\\0&d
\end{pmatrix}
\]
be an arbitrary element of $G$. Next we compute 

\[
AB = \begin{pmatrix}
a&b+d\\0&d
\end{pmatrix} \text{ and }
BA = \begin{pmatrix}
a & a+b\\
0 &d
\end{pmatrix}.
\]
From this we get that the only way that $AB = BA$ is if $a = d$. This implies that every element of $G$ must have square determinant, and $\det(G) = (\ZZ/p\ZZ)^\times$ now forces $p = 2$ and
\[
G = \left\langle \begin{pmatrix} 1&1\\0&1 \end{pmatrix}\right\rangle\subseteq \GL_2(\ZZ/2\ZZ),
\]
which is abelian.

We now consider case (1) of Proposition~\ref{prop:ProjIm}. In this case, $G$ is contained in a Cartan subgroup and is hence abelian.

In case (2) of Proposition~\ref{prop:ProjIm}, the projective image is dihedral and by Example~\ref{ex:WhenDnIsNilp}, a dihedral group is nilpotent if and only if its order is a power of $2$.

In case (3) of Proposition~\ref{prop:ProjIm} the projective image is isomorphic to $A_{4}$, $S_{4}$ or $A_{5}$. If $G$ were nilpotent this would imply that one of $A_{4}$, $S_{4}$ or $A_{5}$ is nilpotent. Proposition~\ref{prop:TrivialCenter} shows that none of these are nilpotent, since all have trivial center.
\end{proof}

We are now ready to apply what is known about the corresponding modular curves. 

\subsection{Modular curves associated to split Cartan subgroups}

In this section, we survey what is known about the modular curves associated to split Cartan subgroups of $\GL_2(\ZZ/p\ZZ)$. We let $C_s^+(p)$ be the normalizer of a split Cartan subgroup of $\GL_2(\ZZ/p\ZZ)$ and $X_s^+(p)$ the corresponding modular curve.

The work of Bilu, Parent, and Rebolledo in \cite{BiluParentRebolledo} gives an almost complete picture of rational points on $X_s^+(p)$. This work together with the work of Balakrishnan, Dogra, M\"{u}ller, Tuitman and Vonk in \cite{Balakrishnan} gives, among other things, the following theorem.

\begin{theorem}\label{thm:BPR}{\rm \cite{Balakrishnan,BiluParentRebolledo}}
If $p\geq 11$ is a prime, then the $\QQ$-rational points on $X_s^+(p)$ are cusps or correspond to elliptic curves with complex multiplication. 
\end{theorem}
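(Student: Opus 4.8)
## Proof proposal for Theorem \ref{thm:BPR}

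The plan is to combine two independent deep inputs: the partial classification of rational points on $X_s^+(p)$ due to Bilu--Parent--Rebolledo \cite{BiluParentRebolledo}, and the quadratic Chabauty computations of Balakrishnan--Dogra--M\"{u}ller et al.\ \cite{Balakrishnan} which close the remaining gap. More precisely, \cite{BiluParentRebolledo} shows (via their ``split Cartan'' analogue of Mazur's method, using Runge-type arguments on the relevant modular curves of level $p$) that for all primes $p \geq 11$ with at most finitely many exceptions, every noncuspidal rational point on $X_s^+(p)$ is CM. First I would quote their main theorem in the precise form they state it, isolating exactly which primes are left unresolved by their technique --- the one genuinely exceptional value being $p = 13$, where $X_s^+(13)$ has genus $3$ and their method does not apply.

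The second step is to invoke \cite{Balakrishnan}, where the rational points on $X_s^+(13)$ (the ``cursed curve'') are determined completely by quadratic Chabauty: the rational points consist of the cusps together with exactly seven points, all of which are CM (corresponding to the CM $j$-invariants of discriminant $-3, -4, -7, -11, -19, -27, -67,$ and $-163$, appropriately accounted for). Combining this with the Bilu--Parent--Rebolledo result for all other $p \geq 11$ gives the statement: for every prime $p \geq 11$, the rational points on $X_s^+(p)$ are cusps or correspond to CM elliptic curves. I would present this as a short assembly of the two cited results, being careful to note that for small $p$ in the range (namely $p = 11$) one may also want to cite the earlier complete determination (e.g.\ by Momose, or by Bilu--Parent directly), but that $p=11$ is in fact already covered by \cite{BiluParentRebolledo}.

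The main obstacle here is not in our proof --- which is essentially a citation and bookkeeping argument --- but rather that the statement rests entirely on two very substantial external theorems, so the real work is in getting the attributions and the ranges of validity exactly right. In particular I would double-check the boundary cases: confirming that \cite{BiluParentRebolledo} genuinely handles all primes $p \geq 11$ except $p = 13$, and that \cite{Balakrishnan} handles $p = 13$ with a clean CM-only conclusion. One minor subtlety worth flagging explicitly is the convention regarding which ``modular curve associated to the split Cartan'' is meant: throughout we take $X_s^+(p)$ to be the curve attached to the \emph{normalizer} $C_s^+(p)$ of the split Cartan, and it is for this curve (not $X_s(p)$ attached to the Cartan itself) that the cited finiteness and determination results apply. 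With those conventions pinned down, the theorem follows immediately from \cite{Balakrishnan,BiluParentRebolledo}.
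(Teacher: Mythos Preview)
Your proposal is correct and matches the paper's approach exactly: the paper does not prove this theorem but simply cites \cite{BiluParentRebolledo} and \cite{Balakrishnan}, and your explanation that Bilu--Parent--Rebolledo handles all primes $p \geq 11$ with $p \neq 13$ while the quadratic Chabauty computation of Balakrishnan et al.\ settles the remaining case $p = 13$ is precisely the intended decomposition behind that citation. (One minor slip: you list eight CM discriminants while saying ``exactly seven points,'' so double-check the exact count of rational points on $X_s^+(13)$ when you write this up.)
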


Using this result we prove the following.

\begin{proposition}\label{prop:SplitMeans23or5}
Let $E/\QQ$ be an elliptic curve without complex multiplication and let $p$ be a prime such that $\Im\bar\rho_{E,p}$ is contained in $C_s^+(p)$. If $\QQ(E[p])/\QQ$ is a nilpotent extension, then $p \in \{ 2, 3, 5 \}$.
\end{proposition}
\begin{proof}
By Theorem~\ref{thm:BPR}, it suffices to rule out the case that $p = 7$. When $p = 7$, the projective image of $C_s^+(p)$ has size $2(7-1) = 12$ and so is not nilpotent. As shown in \cite{RSZB}, there are three maximal admissible subgroups of $C_{s}^{+}(7)$, and for each of these, the corresponding modular curve has genus $1$. For two of these, there are no non-CM points, while for the third of these, there is a non-CM point with $j = \frac{3^{3} \cdot 5 \cdot 7^{5}}{2^{7}}$. An elliptic curve with this $j$-invariant has mod $7$ image isomorphic to either $\Z/6\Z \times S_{3}$ or $\Z/3\Z \times S_{3}$ and neither of these groups is nilpotent.
\end{proof}

\subsection{Modular curves associated to non-split Cartan subgroups}

In this section, we survey what is known about the modular curves associated to non-split Cartan subgroups of $\GL_2(\ZZ/p\ZZ)$.
To start, we will let $p$ be an odd prime and we define the non-split Cartan subgroup of $\GL_2(\ZZ/p\ZZ)$ to be 
\[
C_{ns}(p) := \left\{\begin{pmatrix} a&\epsilon b\\ b & a \end{pmatrix}\colon a,b\in\ZZ/p\ZZ\hbox{ and } (a,b) \neq (0,0)\right\}
\]
where $\epsilon$ is a generator of $(\ZZ/p\ZZ)^\times$. The normalizer of this group in $\GL_2(\ZZ/p\ZZ)$ is exactly 
\[
C_{ns}^+(p) := \left\langle C_{ns}(p), \begin{pmatrix} 1& 0\\ 0 & -1 \end{pmatrix} \right\rangle.
\]
We will denote the modular curves corresponding to these groups by $X_{ns}(p)$ and $X_{ns}^+(p)$respectively. 

Much less is known about the rational points on the modular curves associated with the normalizers of the non-split Cartan subgroups compared to what is know about the split Cartan cases. 
These modular curves have some arithmetic properties that make analysis of their rational points particularly challenging. 
In particular, the Jacobians of these modular curves always have analytic rank at least as big as the genus of the curve. 
This rules out the traditional method of Chabauty and Coleman and requires more advanced techniques (which have been successful in two cases: see
\cite{Balakrishnan} and \cite{Balakrishnan2}).
Fortunately for us, enough is known that we will be able to say quite a bit about the situation unconditionally, and the remainder of what we need is covered by Conjecture \ref{conj:Uniformity}.

Below we give a summary of some of the relevant theorems for these modular curves. 

\begin{proposition}
Let $E/\QQ$ be an elliptic curve that does not have complex multiplication and let $p \geq 7$ be a prime such that $\Im\bar\rho_{E,p}$ is contained in $C_{ns}^{+}(p)$. Then $\Im\bar\rho_{E,p} = C_{ns}^{+}(p)$.
\end{proposition}
\begin{proof}
In \cite{Zyw} the stated result is proven for $p = 7$ (Theorem 1.5) and $p = 11$ (Theorem 1.6).
For $p = 13$, \cite{Balakrishnan} shows that there are no non-CM elliptic curves for which
$\Im\bar\rho_{E,p}$ is contained in $C_{ns}^{+}(p)$. The cases that $p \geq 17$ are handled
by combining Proposition 1.13 of \cite{Zyw} with Theorem 1.6 of \cite{FurioLombardo}.
\end{proof}

From this we can see that if $E/\QQ$ is an elliptic curve and $p\geq 7$ is a prime such that $\Im\bar\rho_{E,p}$ is conjugate to a subgroup of $C_{ns}^+(p),$ then $\Im\bar\rho_{E,p} = C_{ns}^+(p)$ and the image of $\bar\rho_{E,p}$ in $\PGL_2(\ZZ/p\ZZ)$ is a dihedral group with order 
\[
\frac{|C_{ns}^+(p)|}{(p-1)} = \frac{2(p^2-1)}{(p-1)} = 2(p+1).
\]
Combining this with Example \ref{ex:WhenDnIsNilp} and Remark~\ref{rem:nilprojim}, we get that in this case $\Im\bar\rho_{E,p}$ is nilpotent exactly when $2(p+1)$ is a power of 2 which can happen only when $p+1$ is a power of 2 or $p$ is a Mersenne prime. 

We summarize the discussion up to this point in the following proposition.

\begin{proposition}
\label{prop:nonCMnonsplit}	
Let $E/\QQ$ be an elliptic curve without complex multiplication and let $p$ be a prime such that $\Im\bar\rho_{E,p}$ is conjugate to a subgroup of $C_{ns}^+(p)$ and $\QQ(E[p])/\QQ$ is a nilpotent extension. Then $p$ is a Mersenne prime. 
\end{proposition}

This will be as much as we can say unconditionally. 
What we really need here is something like Theorem \ref{thm:BPR}, but for $X_{ns}^+(p)$. 
This is exactly why we need Conjecture \ref{conj:Uniformity}.

\begin{proposition}
	Let $E/\QQ$ be an elliptic curve without complex multiplication and let $p$ be a Mersenne prime. 
	Assuming Conjecture \ref{conj:Uniformity}, if $\Im\bar\rho_{E,p}$ is conjugate to a subgroup of $C_{ns}^+(p)$, then $p = 3$ or $p = 7$.
\end{proposition}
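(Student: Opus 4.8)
The plan is to combine the earlier unconditional reduction with the uniformity conjecture to eliminate all but the smallest Mersenne primes. We already know from the preceding proposition that if $\Im\bar\rho_{E,p}$ is conjugate to a subgroup of $C_{ns}^+(p)$ and $\QQ(E[p])/\QQ$ is nilpotent, then $p$ must be a Mersenne prime; so it suffices to show that under Conjecture~\ref{conj:Uniformity} the only Mersenne primes that can arise in this way (for non-CM $E$) are $3$ and $7$. First I would recall that $3$ and $7$ genuinely do occur: $X_{ns}^+(3)$ and $X_{ns}^+(7)$ each have infinitely many non-CM rational points (indeed $X_{ns}^+(3)$ and $X_{ns}^+(7)$ are of genus $0$ with a rational point, or one cites \cite{RSZB} directly), so these two cases are not vacuous.

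Next I would dispose of the Mersenne primes $p$ with $p > 11$. By definition a Mersenne prime has the form $2^{q}-1$ for $q$ prime; the Mersenne primes are $3, 7, 31, 127, \dots$, so after $3$ and $7$ the next one is $31 > 11$. For such a $p$ we have $p \geq 31 > 11$, so Conjecture~\ref{conj:Uniformity} applies and tells us there is no non-CM elliptic curve $E/\QQ$ whose mod $p$ image is contained in the normalizer of a non-split Cartan subgroup. Since $C_{ns}^+(p)$ \emph{is} exactly the normalizer of a non-split Cartan subgroup of $\GL_2(\ZZ/p\ZZ)$, the hypothesis that $\Im\bar\rho_{E,p}$ is conjugate to a subgroup of $C_{ns}^+(p)$ is precisely the hypothesis excluded by the conjecture. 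This rules out every Mersenne prime exceeding $11$.

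Finally I would handle the remaining range: there are no Mersenne primes strictly between $7$ and $11$ (the only primes there are none of the form $2^q-1$), and $11$ itself is not Mersenne ($11 + 1 = 12$ is not a power of $2$), so no case between $7$ and $31$ survives. Putting these pieces together, if $E/\QQ$ is non-CM, $\QQ(E[p])/\QQ$ is nilpotent, and $\Im\bar\rho_{E,p}$ is conjugate to a subgroup of $C_{ns}^+(p)$, then $p$ is a Mersenne prime with $p \leq 11$, hence $p \in \{3, 7\}$. The only real input here is Conjecture~\ref{conj:Uniformity} together with the already-established unconditional fact that $p$ must be Mersenne; there is no genuine obstacle, since the argument is essentially a bookkeeping exercise identifying $C_{ns}^+(p)$ with the normalizer of the non-split Cartan and checking which Mersenne primes are at most $11$. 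The mild subtlety worth flagging is making sure the passage from ``conjugate to a subgroup of $C_{ns}^+(p)$'' to ``the conjecture applies'' is airtight: a subgroup of the non-split Cartan normalizer still has image contained in that normalizer, so the exclusion in Conjecture~\ref{conj:Uniformity} covers it directly.
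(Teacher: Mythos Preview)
Your proposal is correct and follows the same reasoning the paper intends: the proposition is an immediate consequence of Conjecture~\ref{conj:Uniformity} together with the observation that the Mersenne primes after $3$ and $7$ begin at $31 > 11$. The paper does not give a separate proof of this proposition, treating it as evident from the conjecture, so your write-up is simply a careful spelling-out of that step. One minor point: the statement's hypothesis already assumes $p$ is a Mersenne prime and says nothing about nilpotency, so your references to $\QQ(E[p])/\QQ$ being nilpotent in the opening and closing paragraphs are extraneous (though harmless), and the verification that $p=3,7$ actually occur is not required for the stated implication.
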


Before moving on to the case where $E/\QQ$ has complex multiplication, we state a proposition summarizing this section.

\begin{prop}\label{prop:PrimeLevelSummary}
Let $E/\QQ$ be an elliptic curve without complex multiplication and let $p$ be a prime such that $\QQ(E[p])/\QQ$ is a nilpotent extension. Then Conjecture \ref{conj:Uniformity} implies that $p\in\{2,3,5,7\}.$
\end{prop}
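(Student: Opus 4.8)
The plan is to assemble the case analysis of the preceding subsections according to the trichotomy of Proposition~\ref{prop:ProjImIsDi}. Let $E/\QQ$ be a non-CM elliptic curve with $\QQ(E[p])/\QQ$ nilpotent, so that $G := \Im\bar\rho_{E,p}$ is an admissible nilpotent subgroup of $\GL_2(\ZZ/p\ZZ)$. First I would observe that if $p \mid |G|$, then by Proposition~\ref{prop:ProjIm} either $\SL_2(\ZZ/p\ZZ) \subseteq G$ — impossible since $\SL_2(\ZZ/p\ZZ)$ is not nilpotent and nilpotency is closed under subgroups (Proposition~\ref{prop:NilpIsClosed}) — or $G$ lies in a Borel subgroup, in which case the commutation argument in the proof of Proposition~\ref{prop:ProjImIsDi} forces $p = 2$. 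So either $p = 2$ (and we are done, since $2 \in \{2,3,5,7\}$) or $p \nmid |G|$.

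Assuming now $p \nmid |G|$ and $p$ odd, Proposition~\ref{prop:ProjImIsDi} tells us $G$ is either abelian (hence contained in a Cartan subgroup, by case (1) of Proposition~\ref{prop:ProjIm}) or its projective image is $D_{2^k}$ with $k \geq 2$, in which case $G$ sits in the normalizer of a Cartan subgroup but not the Cartan itself (case (2) of Proposition~\ref{prop:ProjIm}); the exceptional case (3) is excluded since $A_4, S_4, A_5$ have trivial center (Proposition~\ref{prop:TrivialCenter}). In every remaining scenario $G$ is contained in the normalizer of a split or non-split Cartan subgroup. If $G \subseteq C_s^+(p)$, then Proposition~\ref{prop:SplitMeans3or5} gives $p \in \{3,5\}$. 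If $G \subseteq C_{ns}^+(p)$, then the final proposition of the non-split subsection — which uses Conjecture~\ref{conj:Uniformity} — gives $p \in \{3,7\}$. Combining the three cases yields $p \in \{2,3,5,7\}$.

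The only subtlety worth flagging is bookkeeping rather than mathematics: one must make sure the ``abelian'' branch of Proposition~\ref{prop:ProjImIsDi} is genuinely covered by the split/non-split Cartan analysis. An abelian $G$ with $p \nmid |G|$ lies in a Cartan (split or non-split) by Proposition~\ref{prop:ProjIm}(1), hence a fortiori in its normalizer, so Proposition~\ref{prop:SplitMeans3or5} and the non-split proposition apply verbatim; there is no extra case to treat. The one place where Conjecture~\ref{conj:Uniformity} is truly needed is the non-split Cartan branch, exactly as the narrative already emphasizes — unconditionally one only learns $p$ is a Mersenne prime, and the conjecture is what cuts this down to $p \in \{3,7\}$. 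Since every branch of the trichotomy has now been resolved with conclusion $p \in \{2,3,5,7\}$, the proof is complete. \qed
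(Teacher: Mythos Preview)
Your proposal is correct and follows exactly the approach the paper intends: the proposition is stated in the paper as a summary of the preceding subsections with no explicit proof, and you have faithfully assembled that summary by invoking Proposition~\ref{prop:ProjImIsDi} to reduce (for odd $p$) to the split and non-split Cartan normalizer cases, then citing Proposition~\ref{prop:SplitMeans3or5} and the two non-split propositions (Mersenne, then Conjecture~\ref{conj:Uniformity}) respectively. Your explicit treatment of the $p\mid |G|$ case and your remark that the abelian branch is absorbed into the Cartan-normalizer analysis are helpful clarifications that the paper leaves implicit.
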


In Table~\ref{tab:Models} we give models for all modular curves of prime level
$p \in \{2, 3, 5, 7\}$ for which $\QQ(E[p])/\QQ$ is nilpotent. These modular curves are isomorphic to $\PP^{1}$ and hence there are infinitely many rational $j$-invariants of elliptic curves $E/\Q$ for which $\QQ(E[p])/\QQ$ is nilpotent.

\subsection{The case of complex multiplication}

One of the interesting properties of elliptic curves with complex multiplication is that their mod $p$ representations almost always have their images in the normalizer of a Cartan subgroup. Proposition 1.14(i) and (ii) of \cite{Zyw} state the following.

\begin{prop}
Let $E/\QQ$ be an elliptic curve with complex multiplication by an order $\mathcal{O}\neq\ZZ[\zeta_3]$ of a quadratic imaginary field $K$. 
Next, let $p\geq 3$ be a prime such that $p\nmid {\rm disc}(\mathcal{O})$. 
Then, $\Im\bar\rho_{E,p}$ is conjugate to
\[\begin{cases}
C_{s}^+(p) & \hbox{if $p\mathcal{O}_K$ splits in $\mathcal{O}_K$, and} \\
C_{ns}^+(p)& \hbox{if $p\mathcal{O}_K$ is inert in $\mathcal{O}_K$} .
\end{cases}
\]
\end{prop}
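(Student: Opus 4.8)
The plan is to obtain the exact image of the full $p$-adic representation from Proposition~\ref{prop:CMMaximalImage} and then reduce it modulo $p$. The hypotheses of that proposition are met here: $\mathcal{O}\neq\ZZ[\zeta_3]$ is assumed, and $p\geq 3$ together with $p\nmid\disc(\mathcal{O})$ gives $p\nmid 2\disc(\mathcal{O})$. Hence there is a basis of the $p$-adic Tate module for which $\Im\rho_{E,p^\infty} = \mathcal{N}_\mathcal{O}(\ZZ_p)$, and composing with reduction modulo $p$ — which surjects onto $\mathcal{N}_\mathcal{O}(p)$ by the way $\mathcal{N}_\mathcal{O}(\ZZ_p)$ is built as an inverse limit of the surjective system of the $\mathcal{N}_\mathcal{O}(p^k)$ — gives $\Im\bar\rho_{E,p} = \mathcal{N}_\mathcal{O}(p)$ for that basis. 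It then remains to identify $\mathcal{N}_\mathcal{O}(p)$, as a subgroup of $\GL_2(\FF_p)$ up to conjugacy, with $C_s^+(p)$ or with $C_{ns}^+(p)$ according to the splitting of $p$.

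First I would identify the Cartan part. Since $p\nmid\disc(\mathcal{O}) = f^2\disc(\mathcal{O}_K)$, the prime $p$ does not divide the conductor $f$, so $\mathcal{O}/p\mathcal{O} = \mathcal{O}_K/p\mathcal{O}_K$ and $p$ is unramified in $\mathcal{O}_K$; in particular the notions ``$p$ splits (or is inert) in $\mathcal{O}$'' and ``in $\mathcal{O}_K$'' coincide. If $p\mathcal{O}_K$ splits, then $\mathcal{O}/p\mathcal{O}\cong\FF_p\times\FF_p$, so $\mathcal{C}_\mathcal{O}(p)\cong(\mathcal{O}/p\mathcal{O})^\times\cong\FF_p^\times\times\FF_p^\times$; realized in $\GL_2(\FF_p)$ by $a+b\omega\mapsto\smallmat{a+b\phi}{b}{\delta b}{a}$ (the regular representation of $\mathcal{O}/p\mathcal{O}$), this is a split Cartan, hence $\GL_2(\FF_p)$-conjugate to $C_s(p)$. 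If $p\mathcal{O}_K$ is inert, then $\mathcal{O}/p\mathcal{O}\cong\FF_{p^2}$ and the same map realizes $\mathcal{C}_\mathcal{O}(p)\cong\FF_{p^2}^\times$ as a non-split Cartan, conjugate to $C_{ns}(p)$.

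Next I would pin down the normalizer. By construction $\mathcal{N}_\mathcal{O}(p)=\langle\mathcal{C}_\mathcal{O}(p),\smallmat{-1}{0}{\phi}{1}\rangle$. Since $\mathcal{N}_\mathcal{O}(\ZZ_p)$ normalizes $\mathcal{C}_\mathcal{O}(\ZZ_p)$ (by \cite[Proposition 5.6(2)]{ALR-CMImages}, as noted in the text) and reduction modulo $p$ is a surjective homomorphism on both, $\mathcal{N}_\mathcal{O}(p)$ normalizes $\mathcal{C}_\mathcal{O}(p)$; and a direct entry-by-entry check shows $\smallmat{-1}{0}{\phi}{1}\notin\mathcal{C}_\mathcal{O}(p)$ when $p$ is odd. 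Thus $\mathcal{C}_\mathcal{O}(p)\subsetneq\mathcal{N}_\mathcal{O}(p)\subseteq N_{\GL_2(\FF_p)}(\mathcal{C}_\mathcal{O}(p))$, and since the normalizer of a Cartan subgroup $C$ of $\GL_2(\FF_p)$ satisfies $[N(C):C]=2$ for $p$ odd, $\mathcal{N}_\mathcal{O}(p)$ must be the full normalizer — conjugate to $C_s^+(p)$ in the split case and to $C_{ns}^+(p)$ in the inert case. This completes the argument.

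I do not expect a real obstacle; the only points that need a moment of attention are checking that the hypotheses really put us inside the scope of Proposition~\ref{prop:CMMaximalImage} even at the otherwise delicate prime $p=3$ (they do, since $3\nmid 2\disc(\mathcal{O})$ there), the collapse of the two notions of splitting because $p\nmid f$, and recalling the index-$2$ fact for Cartan normalizers. Alternatively, one could avoid Proposition~\ref{prop:CMMaximalImage} and instead upgrade the containments $\Im\bar\rho_{E,p}\subseteq C_s^+(p)$, $C_{ns}^+(p)$ of Theorem~\ref{thm:CMImagesModP} to equalities using surjectivity of $\det\circ\bar\rho_{E,p}$, the presence of a complex-conjugation element of trace $0$ and determinant $-1$ lying outside the Cartan, and the main theorem of complex multiplication to bound the index of $\Im\bar\rho_{E,p}\cap C(p)$ in the Cartan; this is closer to Zywina's proof but is fussier at small primes, so I would take the first route.
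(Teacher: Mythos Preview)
The paper does not supply its own proof of this proposition; it is imported verbatim as Proposition~1.14(i) and (ii) of \cite{Zyw}. Your argument is correct and provides a self-contained derivation from the other CM results already quoted in the paper---specifically, you deduce the exact mod $p$ image by reducing the equality $\Im\rho_{E,p^{\infty}}=\mathcal{N}_{\mathcal{O}}(\ZZ_p)$ of Proposition~\ref{prop:CMMaximalImage} and then identifying $\mathcal{N}_{\mathcal{O}}(p)$ with the normalizer of the appropriate Cartan via the structure of $\mathcal{O}/p\mathcal{O}$ and the index-$2$ fact for Cartan normalizers. This is cleaner than the alternative you sketch at the end (upgrading the containment of Theorem~\ref{thm:CMImagesModP} via admissibility and class-field-theoretic surjectivity, which is closer to Zywina's own argument) because it outsources the delicate work to the $p$-adic statement and leaves only elementary identifications at level~$p$. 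The small checks you flag---that $p\geq 3$ with $p\nmid\disc(\mathcal{O})$ suffices for Proposition~\ref{prop:CMMaximalImage}, that $p\nmid f$ collapses splitting in $\mathcal{O}$ and $\mathcal{O}_K$, surjectivity of the reduction $\mathcal{N}_{\mathcal{O}}(\ZZ_p)\to\mathcal{N}_{\mathcal{O}}(p)$, and that $\smallmat{-1}{0}{\phi}{1}\notin\mathcal{C}_{\mathcal{O}}(p)$ for $p$ odd---are all routine and correctly handled.
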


The point of this proposition is that in these cases the mod $p$ images is as large as possible. This is useful because we know that in these cases the image of $\Im\bar\rho_{E,p}$ of in $\PGL_2(\ZZ/p\ZZ)$ is dihedral by Proposition~\ref{prop:ProjIm} and has easily computable size. Remark~\ref{rem:nilprojim} and Example~\ref{ex:WhenDnIsNilp} now imply that $\Im\bar\rho_{E,p}$ is nilpotent
exactly when its image in $\PGL_{2}(\ZZ/p\ZZ)$ is a $2$-group.

Since $|C_s^+(p)| = 2(p-1)^2$ while $|C_{ns}^+(p)| = 2(p^2-1)$,
we have that $\Im\bar\rho_{E,p}$ is nilpotent
if it equals $C_{s}^{+}(p)$ and $p$ is a Fermat prime, or
if it equals $C_{ns}^{+}(p)$ and $p$ is a Mersenne prime.



The following result summarizes the situation and handles the cases that $p \mid {\rm disc}(\mathcal{O})$.
\begin{proposition}\label{prop:FermatOrMersenne}
Let $E/\QQ$ be an elliptic curve with complex multiplication by $\mathcal{O}\neq \ZZ[\zeta_3]$ and $p$ an odd prime. Then, $\QQ(E[p])/\QQ$ is nilpotent if and only if either $p$ splits in $\mathcal{O}$ and $p$ is a Fermat prime or $p$ is inert in $\mathcal{O}$ and $p$ is a Mersenne prime.
\end{proposition}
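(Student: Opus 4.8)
The plan is to reformulate the statement group-theoretically. Since $\Gal(\QQ(E[p])/\QQ)\cong\Im\bar\rho_{E,p}=:G$, the extension $\QQ(E[p])/\QQ$ is nilpotent if and only if $G$ is nilpotent, and since the group of scalars $N:=Z(\GL_2(\ZZ/p\ZZ))\cap G$ lies in $Z(G)$, this in turn happens if and only if the image $\bar G$ of $G$ in $\PGL_2(\ZZ/p\ZZ)$ is nilpotent (for the nonobvious direction, a central extension of a nilpotent group is nilpotent, \cite{ConradSubSeriesI}). I would then split on whether $p\mid\disc(\mathcal{O})$.

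Suppose first that $p\nmid\disc(\mathcal{O})$, so $p$ is unramified in $\mathcal{O}$, i.e.\ split or inert. Here the hypothesis $\mathcal{O}\neq\ZZ[\zeta_3]$ together with $p\geq 3$ lets me invoke Proposition 1.14(i),(ii) of \cite{Zyw}, quoted just above: $G$ is conjugate to $C_s^+(p)$ when $p$ splits in $\mathcal{O}$ and to $C_{ns}^+(p)$ when $p$ is inert. In the split case $\bar G$ is dihedral of order $2(p-1)$ and in the inert case dihedral of order $2(p+1)$; by Example~\ref{ex:WhenDnIsNilp} such a group is nilpotent precisely when its order is a power of $2$. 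Since $p$ is an odd prime, $p-1$ is a power of $2$ exactly when $p$ is a Fermat prime and $p+1$ is a power of $2$ exactly when $p$ is a Mersenne prime, which yields the equivalence in this case. (For the ``if'' direction in the split case one can even avoid the projective reduction and note that $|C_s^+(p)|=2(p-1)^2$ is then a $2$-power; in the inert case $|C_{ns}^+(p)|=2(p^2-1)$ is not a $2$-power, so passing to $\bar G$ is genuinely used.)

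Now suppose $p\mid\disc(\mathcal{O})$ with $p$ odd; I claim $\QQ(E[p])/\QQ$ is never nilpotent, which is consistent with the stated equivalence because $p\mid\disc(\mathcal{O})$ means $p$ is neither split nor inert in $\mathcal{O}$. Writing $\mathcal{O}=\ZZ[\omega]$ with $\omega^2-\phi\omega-\delta=0$ and $\disc(\mathcal{O})=\phi^2+4\delta$, reduction modulo an odd $p$ dividing $\phi^2+4\delta$ shows $\mathcal{O}/p\mathcal{O}\cong\FF_p[t]/(t^2)$ is local; hence $E[p]$ has a unique $\mathcal{C}_\mathcal{O}(p)$-stable line, which (being canonically attached to the $\mathcal{O}$-action) is in fact $\Gal(\overline{\QQ}/\QQ)$-stable, so $E$ admits a rational $p$-isogeny and $G$ lies in a Borel subgroup of $\GL_2(\ZZ/p\ZZ)$. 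If $G$ were nilpotent then, since $G$ is admissible, the analysis of the Borel case carried out just before Proposition~\ref{prop:ProjImIsDi} forces (for odd $p$) $G$ into a split Cartan subgroup, giving $E$ a \emph{second} rational $p$-isogeny; but the image of $\Gal(\overline{\QQ}/K)$ acts through $\mathcal{C}_\mathcal{O}(p)$, whose only stable line is the one found above, a contradiction unless that image is scalar — in which case $G=\langle N,c\rangle$ with $c$ the image of complex conjugation is abelian and $\QQ(E[p])/\QQ$ is an abelian extension, which \cite{AbelDivFields} excludes for the finitely many pairs $(p,\mathcal{O})$ that can occur. Alternatively, since $E/\QQ$ forces $\mathcal{O}$ to have class number one, the pairs $(p,\mathcal{O})$ with $p$ odd and $p\mid\disc(\mathcal{O})$ form a short explicit list, and one may simply read off $G$ from the tables of CM mod-$p$ images in \cite{ALR-CMImages} or \cite{RSZB} and check non-nilpotency directly.

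I expect the ramified case $p\mid\disc(\mathcal{O})$ to be the only real obstacle: away from the discriminant the image is the full Cartan normalizer and the argument is pure group theory, whereas at ramified primes the image is smaller and its precise shape must be pinned down, either via the local-ring / no-second-isogeny argument above or by appeal to the explicit CM classification. A secondary point needing care is the bookkeeping around the small primes (notably $p=3$, and the Fermat prime $p=5$) and the convention that $(\ZZ/2\ZZ)^2$ counts as dihedral, which is what makes the final statement uniform.
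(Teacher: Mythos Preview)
Your treatment of the unramified case ($p\nmid\disc(\mathcal{O})$) is exactly the paper's: invoke Zywina's Proposition 1.14(i),(ii) to get the full Cartan normalizer as image, read off the order of the projective image, and apply the dihedral criterion.

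For the ramified case the paper is more direct than your primary argument. It simply cites \cite[Theorem 1.14]{Zyw}, which says that when $p\mid\disc(\mathcal{O})$ the image $\Im\bar\rho_{E,p}$ is (up to conjugacy) one of three explicit Borel-type groups, each visibly containing $\left(\begin{smallmatrix}1&1\\0&1\end{smallmatrix}\right)$; the Borel analysis preceding Proposition~\ref{prop:ProjImIsDi} then rules out nilpotency for odd $p$ in one line. Your structural route---unique stable line from $\mathcal{O}/p\mathcal{O}\cong\FF_p[t]/(t^2)$, hence $G$ in a Borel, hence (for odd $p$, nilpotent, admissible) $G$ conjugate into a split Cartan, forcing the image of $\Gal(\overline{\QQ}/K)$ to be scalar and $G$ to be abelian---is correct and pleasantly conceptual, but the closing appeal to \cite{AbelDivFields} does not finish the job uniformly: that paper \emph{allows} $\QQ(E[3])/\QQ$ to be abelian for certain curves, so for $p=3$ (the cases $D=-12,-27$) you still have to verify that the relevant CM $j$-invariants are not among them. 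That verification is essentially the table check you offer as an alternative, which is in turn equivalent to the paper's citation of Zywina. One other small step you should make explicit: the paper's Borel analysis only treats the case $p\mid|G|$; to get from ``$p\nmid|G|$ and $G\subseteq$ Borel'' to ``$G$ lies in a split Cartan'' you need a Hall/Schur--Zassenhaus argument inside the Borel, which you use implicitly.
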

\begin{proof}
The discussion proceeding this theorem covers the case where $p\nmid {\rm disc}(\mathcal{O}).$ To handle the case where $p\mid{\rm disc}(\mathcal{O}),$ we refer to \cite[Theorem 1.14]{Zyw} which shows that in this case $\Im\bar\rho_{E,p}$ isomorphic to one of the following groups
\begin{align*}
G := \left\{ \begin{pmatrix}a&b\\0&\pm a \end{pmatrix}\colon a\in(\ZZ/p\ZZ)^\times, b\in\ZZ/p\ZZ \right\},\\
H_1 := \left\{ \begin{pmatrix}a&b\\0&\pm a \end{pmatrix}\colon a\in((\ZZ/p\ZZ)^\times)^2, b\in\ZZ/p\ZZ \right\}, \hbox{ or }\\
H_2 := \left\{ \begin{pmatrix}\pm a&b\\0& a \end{pmatrix}\colon a\in((\ZZ/p\ZZ)^\times)^2, b\in\ZZ/p\ZZ \right\}.
\end{align*}
In all three cases, the matrix $\begin{pmatrix} 1&1\\0&1\end{pmatrix}$ is in $\Im\bar\rho_{E,p}$ and so $\Im\bar\rho_{E,p}$ is not nilpotent by Proposition \ref{prop:ProjImIsDi}.
\end{proof}

\subsubsection{The case when $E$ has complex multiplication by $\ZZ[\zeta_3]$}

If $E/\QQ$ has complex multiplication by $\mathcal{O} = \ZZ[\zeta_3]$ we know that $j(E) = 0.$ Given such an elliptic curve, we know that there is always a $d\in\QQ^\times$ such that $E$ is isomorphic to the curve 
\[
E_d\colon y^2 = x^3+d.
\]
The images of the mod $p$ representations of $E$ depend on the value of $d$ modulo 6th powers. This relationship is explicitly classified in \cite[Propositions 1.15 and 1.16]{Zyw}. We summarize the relevant parts of those propositions here for the convenience of the reader.

\begin{thm}{\rm \cite[Propositions 1.15 and 1.16]{Zyw}}\label{thm:j=0}
Let $E/\QQ$ be an elliptic curve with complex multiplication by $\ZZ[\zeta_3]$. Then the curve $E$ can be given by a Weierstrass equation of the form 
\[
y^2 = x^3+d
\]
for some $d\in\QQ^\times.$
\begin{enumerate}
	\item If $d$ is a cube, then $\Im\bar\rho_{E,2} = \left\langle \begin{pmatrix} 1&1\\0&1 \end{pmatrix} \right\rangle$. Otherwise, $\Im\bar\rho_{E,2} = \GL_2(\ZZ/2\ZZ)$. 
	\item If $4d$ is not a cube, then $\Im\bar\rho_{E,3}$ is conjugate to 
\[
	\begin{cases} 
	\left\{ \begin{pmatrix} \pm 1 & a \\ 0 & b \end{pmatrix}\colon a\in \ZZ/3\ZZ \hbox{ and } b\in(\ZZ/3\ZZ)^\times \right\} & \text{ if neither } d \text{ nor } -3d \text{ is a square }\\
	\left\{ \begin{pmatrix} 1 & a\\0&b \end{pmatrix}\colon a\in (\ZZ/3\ZZ)^\times\hbox{ and } b\in\ZZ/3\ZZ \right\} & \text{ if } d \text{ is a square }\\
    \left\{ \begin{pmatrix} a & b\\ 0 & 1 \end{pmatrix}\colon a\in (\ZZ/3\ZZ)^\times\hbox{ and } b\in\ZZ/3\ZZ \right\} & \text{ if } 3d \text{ is a square. }
\end{cases}
\]
On the other hand, if $4d$ is a cube, then $\Im\bar\rho_{E,3}$ is conjugate to
\[
\begin{cases}
   \left\{ \begin{pmatrix} a & 0 \\ 0 & b \end{pmatrix} \colon a, b \in (\ZZ/3\ZZ)^{\times} \right\} & \text{ if neither } d \text{ nor } -3d \text{ is a square }\\
   \left\{ \begin{pmatrix} 1 & 0 \\ 0 & b \end{pmatrix} \colon b \in (\ZZ/3\ZZ)^{\times} \right\} & \text{ if either } d \text{ or } -3d \text{ is a square. }
\end{cases}
\]
	\item If $p\equiv 1 \bmod 9$, then $\Im\bar\rho_{E,p}$ is conjugate to $C_s^+(p).$
	\item If $p\equiv 8 \bmod 9$, then $\Im\bar\rho_{E,p}$ is conjugate to $C_{ns}^+(p).$
	\item Suppose that $p\equiv 4$ or $7\bmod 9$ and $e\in \{1,2\}$ such that $e\equiv \frac{p-1}{3} \bmod 3$. If $d\not\equiv 16p^e\bmod (\QQ^\times)^3$, then $\Im\bar\rho_{E,p}$ is conjugate to $C_s^+(p)$. If $d\equiv 16p^e\bmod (\QQ^\times)^3$, then $\Im\bar\rho_{E,p}$ is conjugate in $\GL_2(\ZZ/p\ZZ)$ to the subgroup of $C_s^+(p)$ consisting of matrices of the form 
	\[
		\begin{pmatrix} a&0\\0&b \end{pmatrix}\hbox{ and }\begin{pmatrix} 0&a\\b&0 \end{pmatrix}
	\]
	with $a,b\in (\ZZ/p\ZZ)^\times$ such that $a/b$ is a cube.

	\item Suppose that $p\equiv 2$ or $5\bmod 9$ and let $e\in \{1,2\}$ such that $-e\equiv \frac{p+1}{3} \bmod 3$. If $d\not\equiv 16p^e\bmod (\QQ^\times)^3$, then $\Im\bar\rho_{E,p}$ is conjugate to $C_{ns}^+(p)$. 
	If $d\equiv 16p^e\bmod (\QQ^\times)^3$, then $\Im\bar\rho_{E,p}$ is conjugate in $\GL_2(\ZZ/p\ZZ)$ to the subgroup generated by the unique index 3 subgroup of $C_{ns}(p)$ and $\begin{pmatrix} 1&0\\0&-1 \end{pmatrix}.$
\end{enumerate}
\end{thm}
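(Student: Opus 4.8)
The plan is to combine the theory of complex multiplication with the fact that every $E_d$ is a sextic twist of a single fixed curve of $j$-invariant $0$. Write $K = \QQ(\sqrt{-3})$, so that $E_d$ has complex multiplication by $\ZZ[\zeta_3] = \mathcal{O}_K$ with endomorphisms defined over $K$. First one recalls the standard dichotomy from CM theory: the restriction of $\bar\rho_{E_d,p}$ to $\Gal(\overline{\QQ}/K)$ is abelian, taking values in the Cartan subgroup of $\GL_2(\ZZ/p\ZZ)$ coming from the $\mathcal{O}_K$-module structure on $E_d[p]$ --- split when $p \equiv 1 \pmod{3}$, non-split when $p \equiv 2 \pmod{3}$, and a subgroup of a Borel (after base change) when $p = 3$. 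A lift of complex conjugation normalizes this Cartan and acts on it through its nontrivial automorphism, so $\Im\bar\rho_{E_d,p}$ is always contained in the normalizer $C^+(p)$; this already accounts for the coarse shapes in parts (2)--(6). The remaining task is to determine the image exactly, and this is where the class of $d$ and the residue of $p$ modulo $9$ enter.

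The core of the argument would proceed in two stages. In the first stage we fix the base curve $E_1 \colon y^2 = x^3 + 1$ and identify its Hecke character $\psi$ of $K$ explicitly, noting that $\psi$ is ramified only at the primes above $6$. Reducing $\psi$ modulo a prime $\mathfrak{p} \mid p$ gives the image of $\Gal(\overline{\QQ}/K)$ inside $(\mathcal{O}_K/\mathfrak{p})^{\times}$; that image has index dividing $6$, and the factor of $3$ is present or absent according to whether the relevant values of $\psi$ are cubes modulo $\mathfrak{p}$ --- a cubic-residue condition, and precisely the point at which $p \bmod 9$, rather than just $p \bmod 3$, intervenes, since the cube structure of $(\mathcal{O}_K/p)^{\times}$ depends on $p \bmod 9$. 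In the second stage, for general $d$, we use that the class of $d$ in $\QQ^{\times}/(\QQ^{\times})^{6}$ is exactly the cohomology class in $\mathrm{H}^{1}(\Gal(\overline{\QQ}/\QQ), \mu_6) = \mathrm{H}^{1}(\Gal(\overline{\QQ}/\QQ), \Aut_{\overline{\QQ}}(E_1))$ defining the twist, so that $\bar\rho_{E_d,p}$ differs from $\bar\rho_{E_1,p}$ by the associated $\mu_6$-valued twisting datum $\chi_d$ (a genuine character upon restriction to $\Gal(\overline{\QQ}/K)$), which inside $C^+(p)$ is ramified only at $2$, $3$, $p$, and the primes dividing $d$. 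The image of the twisted representation is then $\{\chi_d(\sigma)\,\bar\rho_{E_1,p}(\sigma) : \sigma \in \Gal(\overline{\QQ}/\QQ)\}$, and one analyzes how multiplying by $\chi_d$ can shrink the Cartan part. The order-$2$ component of $\chi_d$, governed by $d \bmod (\QQ^{\times})^{2}$ --- equivalently by which of $\sqrt{d}$, $\sqrt{-3d}$ lies in the relevant field --- produces the three Borel sub-cases of part (2) and the dichotomy of part (1); the order-$3$ component, governed by $d$ versus $16 p^{e} \bmod (\QQ^{\times})^{3}$, where the $16$ is a normalization artifact of the model $y^2 = x^3 + d$ and $p^{e}$ (with $e \equiv \tfrac{p \mp 1}{3} \pmod{3}$ as in the statement) records the $\mathfrak{p}$-component of $\psi$, forces the index-$3$ subgroups of $C_s^+(p)$ and $C_{ns}^+(p)$ in parts (5) and (6); and when $p \equiv \pm 1 \pmod{9}$ the cube-index is pinned to $1$ regardless of $d$, giving parts (3) and (4).

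The small primes $p = 2$ and $p = 3$ are best handled directly rather than through $\psi$. For $p = 2$ the nontrivial $2$-torsion points are the $(x,0)$ with $x^3 = -d$, so $\QQ(E_d[2]) = \QQ(\zeta_3, \sqrt[3]{-d})$, which has degree $2$ when $d$ is a cube --- in which case the image is the order-$2$ subgroup $\langle \smallmat{1}{1}{0}{1} \rangle$, all order-$2$ subgroups of $\GL_2(\ZZ/2\ZZ)$ being conjugate --- and degree $6 = |\GL_2(\ZZ/2\ZZ)|$ otherwise, which is part (1). For $p = 3$ the $3$-division polynomial of $y^2 = x^3 + d$ is $3x(x^3 + 4d)$, whose roots $x = 0$ and $x^3 = -4d$ show that $\QQ(E_d[3])$ is generated by $\sqrt{d}$, $\zeta_3$, $\sqrt[3]{-4d}$ and one further square root; tabulating the resulting Galois action against the listed Borel-type groups, keyed on whether $-4d$ is a cube and whether $d$ or $-3d$ is a square, yields part (2). \textbf{The main obstacle} is the second stage above: writing down the Hecke character of $E_1$ and its reductions correctly, and then tracking the conductor and the local components of the twisting datum $\chi_d$ precisely enough to see \emph{which} index-$3$ subgroup of the Cartan normalizer occurs in parts (5) and (6) --- matching the explicit normalization $16 p^{e}$ against the CM data is the one genuinely delicate point, with the rest being careful but routine bookkeeping.
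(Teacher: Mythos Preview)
The paper does not prove this statement at all: it is quoted verbatim from Zywina's classification (Propositions~1.15 and~1.16 of \cite{Zyw}) and stated without proof, as the attribution in the theorem header indicates. There is therefore no ``paper's own proof'' to compare your proposal against.

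That said, your outline is essentially the argument Zywina gives. The two-stage strategy --- first pinning down the Hecke character of a fixed model $E_{1}$ and reading off the index of the Cartan image from cubic-residue data (which is where $p \bmod 9$ enters), then tracking how the sextic-twist cocycle $\chi_{d}$ valued in $\mu_{6} \cong \Aut(E_{1})$ perturbs that image --- is exactly the structure of his proof, and your identification of the delicate point (matching the explicit constant $16p^{e}$ to the local data of $\psi$ and $\chi_{d}$) is accurate. Your direct treatment of $p=2$ via the roots of $x^{3}+d$ and of $p=3$ via the $3$-division polynomial $3x(x^{3}+4d)$ is also the standard route for the small primes. So your proposal is correct and aligned with the cited source; there is simply nothing in the present paper to contrast it with.
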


The take away from this theorem is that for $E_{d} \colon y^{2} = x^{3} + d$, whether or not $\Im\bar\rho_{E,p}$ is nilpotent is completely controlled by $d\bmod (\QQ^\times)^3.$

For example, condition (1) tells us that $\QQ(E_d[2])/\QQ$ is a nilpotent extension exactly when $d$ is a cube. Similarly, condition (2) says that $\QQ(E_d[3])/\QQ$ is nilpotent when $4d$ is a cube. 

We note that there are no Fermat primes $\equiv 1 \pmod{9}$. By Remark~\ref{rem:nilprojim}
we have that $\Im\bar\rho_{E,p}$ is nilpotent if and only if its image in $\PGL_{2}(\FF_{p})$, namely a dihedral group of order $2(p-1)$, is nilpotent. By Example~\ref{ex:WhenDnIsNilp} this cannot occur since $2(p-1)$ is not a power of $2$.

Thus if $p \equiv 1 \pmod{9}$
the image of $\Im\bar\rho_{E,p}$ in $\PGL_{2}(\ZZ/p\ZZ)$ is dihedral if and only if 

so condition (3) never yields a nilpotent $\Q(E[p])/\Q$. Likewise, there are no Mersenne primes $p \equiv 8 \pmod{9}$.

The last cases that we have to deal with are the special cases that arise in cases (5) and (6) of Theorem \ref{thm:j=0}. In cases (5) and (6) respectively, the image of $\bar\rho_{E,p}$ is contained in an index 3 subgroup of $C_s^+(p)$ and $C_{ns}^+(p)$ respectively. If we are in condition (5), then image of $\Im\bar\rho_{E,p}$ inside of $\PGL_2(\ZZ/p\ZZ)$ is a dihedral group of size $\frac{2(p-1)}{3}$, while in condition (6) the image of $\Im\bar\rho_{E,p}$ inside of $\PGL_2(\ZZ/p\ZZ)$ is a dihedral group of size $\frac{2(p+1)}{3}.$ This along with our previous analysis gives the following proposition.

\begin{prop}\label{prop:j=0primelevel}
Let $E_d\colon y^2 = x^3 + d$ and $p$ a prime. Then $\QQ(E_d[p])/\QQ$ is nilpotent if and only if
\[
\begin{cases}
d \equiv 1 \pmod{(\QQ^{\times})^{3}} \text{ if } p = 2,\\
d \equiv 2 \pmod{(\QQ^{\times})^{3}} \text{ if } p = 3,\\
d \equiv 2\cdot p^{\frac{p-1}{3}} \pmod{(\QQ^{\times})^{3}} \text{ if }
p = 3 \cdot 2^{k} + 1\hbox{ for some $k\geq 1$},\\
d \equiv 2\cdot p^{\frac{p+1}{3}} \pmod{(\QQ^{\times})^{3}} \text{ if }
p = 3 \cdot 2^{k} - 1\hbox{ for some $k\geq 1$}.\\
\end{cases}
\]
\end{prop}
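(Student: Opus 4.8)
The plan is to walk through the six cases of Theorem~\ref{thm:j=0}, which pins down $\Im\bar\rho_{E_d,p}$ from the residue of $p$ modulo $9$ together with $d$ modulo $(\QQ^\times)^3$, and then to decide nilpotency group-theoretically. The decision relies on: $G \subseteq \GL_2(\ZZ/p\ZZ)$ is nilpotent iff its image in $\PGL_2(\ZZ/p\ZZ)$ is nilpotent; abelian images (in particular those contained in a Cartan subgroup) are nilpotent; an image whose projective image is the full dihedral group $D_m$ is nilpotent iff $m$ is a power of $2$ (Example~\ref{ex:WhenDnIsNilp}, with the convention of Remark~\ref{remark:VIsDihedral}); and an image contained in a Borel subgroup that contains a transvection is non-nilpotent, since up to conjugacy it then contains $S_3$ (the discussion following Remark~\ref{remark:VIsDihedral}). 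Every prime is $2$, $3$, or congruent to one of $1,2,4,5,7,8$ modulo $9$, so cases (1)--(6) of Theorem~\ref{thm:j=0} are exhaustive.

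First I would dispatch $p = 2$ and $p = 3$. For $p = 2$, case (1) gives $\Im\bar\rho_{E_d,2} = \langle\smallmat{1}{1}{0}{1}\rangle$ (order $2$, nilpotent) when $d$ is a cube and $\GL_2(\ZZ/2\ZZ) \cong S_3$ (non-nilpotent) otherwise; since $d$ is a cube iff $d \equiv 1 \pmod{(\QQ^\times)^3}$, this is the first line. For $p = 3$, case (2) gives an abelian (diagonal) image exactly when $-4d$ is a cube, while every image occurring when $-4d$ is not a cube lies in a Borel and contains a transvection, hence is non-nilpotent; and $-4d$ is a cube iff $d \equiv 2 \pmod{(\QQ^\times)^3}$ (multiply by the cube $(-2)^3$), giving the second line.

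For $p \geq 5$ I would run through cases (3)--(6). In cases (3) and (4) the image is the full $C_s^+(p)$, resp.\ $C_{ns}^+(p)$, with projective image $D_{p-1}$, resp.\ $D_{p+1}$; nilpotency would force $p$ to be a Fermat, resp.\ Mersenne, prime, but $p \equiv 1 \pmod 9$, resp.\ $p \equiv 8 \pmod 9$, makes $9 \mid p \mp 1$, which is impossible for a power of $2$ --- so these cases never give a nilpotent field. In cases (5) and (6), for $d$ outside the distinguished cube class the image is again the full $C_s^+(p)$, resp.\ $C_{ns}^+(p)$, and the same argument applies ($p \equiv 4,7 \pmod 9$, resp.\ $2,5 \pmod 9$, again bars $p$ from being Fermat, resp.\ Mersenne); while for $d \equiv 16\,p^e \pmod{(\QQ^\times)^3}$ the image is the explicit index-$3$ subgroup of $C_s^+(p)$, resp.\ $C_{ns}^+(p)$, named in Theorem~\ref{thm:j=0}, whose projective image is the dihedral group $D_{(p-1)/3}$, resp.\ $D_{(p+1)/3}$; this is nilpotent exactly when $(p-1)/3$, resp.\ $(p+1)/3$, is a power of $2$, i.e.\ exactly when $p = 3\cdot 2^k+1$, resp.\ $p = 3\cdot 2^k-1$, for some $k \geq 1$ (and such $p$ do lie in case (5), resp.\ (6), since $3\cdot 2^k \pm 1 \not\equiv 1,8 \pmod 9$). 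It then remains to rewrite the distinguished class: since $16 \equiv 2 \pmod{(\QQ^\times)^3}$ and $p^e$ depends only on $e \bmod 3$, the defining congruence for $e$ turns $16\,p^e$ into $2\,p^{(p-1)/3}$, resp.\ $2\,p^{(p+1)/3}$, modulo $(\QQ^\times)^3$, matching the last two lines.

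The one step needing genuine care --- the main obstacle --- is this final congruence bookkeeping: working simultaneously modulo $3$ in the exponent of $p$ and modulo $(\QQ^\times)^3$ in the base, reconciling the ``is a cube'' conditions of cases (1)--(2) with the stated cube-class conditions (via $-1 \in (\QQ^\times)^3$, $16 \equiv 2 \pmod{(\QQ^\times)^3}$, and $-4d \in (\QQ^\times)^3 \Leftrightarrow d \equiv 2 \pmod{(\QQ^\times)^3}$), and matching the exponent $e$ of Theorem~\ref{thm:j=0} against $\tfrac{p\mp1}{3}$ using its defining congruence. The small number-theoretic inputs --- that no Fermat prime is $\equiv 1,4,7 \pmod 9$ and no Mersenne prime is $\equiv 2,5,8 \pmod 9$, each reducing to $3 \nmid 2^m$ --- are elementary, but they are precisely what eliminates every generic-$d$ subcase, so they are worth isolating. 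Nothing beyond Theorem~\ref{thm:j=0} and the nilpotency criteria above is required.
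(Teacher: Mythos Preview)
Your proposal is correct and follows essentially the same approach as the paper: the paper's argument is precisely the discussion preceding the proposition, which runs through the six cases of Theorem~\ref{thm:j=0}, eliminates cases (3)--(4) and the generic subcases of (5)--(6) by the observation that no Fermat (resp.\ Mersenne) prime is $\equiv 1 \pmod 3$ (resp.\ $\equiv 2 \pmod 3$), and reads off the dihedral projective image of size $\tfrac{2(p\mp 1)}{3}$ in the special subcases. Your write-up is in fact more explicit than the paper's on the cube-class bookkeeping and on why primes of the form $3\cdot 2^k\pm 1$ land in cases (5)--(6) rather than (3)--(4).
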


\begin{example}
	Let $E$ be the elliptic curve given by
	\[
y^2 = x^3 + 16\cdot97^2.
	\]
	We check in the LMFDB that the image of $\bar\rho_{E,97}$ is conjugate to the group with RSZB label \texttt{97.14259.1103.1}. One can check directly that this group is nilpotent and so $\QQ(E[97])/\QQ$ is nilpotent. 
\end{example}

\begin{remark}
If $E$ is an elliptic curve with complex multiplication by $\mathcal{O} = \ZZ[\zeta_3]$, then Proposition~\ref{prop:j=0primelevel} shows that $\QQ(E[p])/\QQ$ is nilpotent for at most one prime $p$. 
\end{remark}

Now we prove Corollary~\ref{cor:no19}.
\begin{proof}
If $E/\Q$ is an elliptic curve for which $\Q(E[19])/\Q$ is nilpotent then by Proposition~\ref{prop:ProjImIsDi}, $\Im\bar\rho_{E,p}$ is contained in the normalizer of a split or non-split Cartan subgroup. If $E$ is non-CM, the split Cartan case cannot occur by Proposition~\ref{prop:SplitMeans23or5} and the non-split Cartan case cannot occur by Proposition~\ref{prop:nonCMnonsplit}. If $E$ has CM by an order $\mathcal{O}$, then Proposition~\ref{prop:FermatOrMersenne} forces $\mathcal{O} = \Z[\zeta_{3}]$. However, by Proposition~\ref{prop:j=0primelevel} only cases where $p = 3 \cdot 2^{k} \pm 1$ can occur and $19$ does not have this form.

Proposition~\ref{prop:j=0primelevel} allows us to construct elliptic curves $E_{d} \colon y^{2} = x^{3} + d$ for which $\Q(E_{d}[p])/\Q$ is nilpotent for
$p = 2$, $p = 3$, $p = 5 = 3 \cdot 2 - 1$, $p = 7 = 3 \cdot 2 + 1$, $p = 11 = 3 \cdot 2^{2} - 1$
and $p = 13 = 3 \cdot 2^{2} + 1$. The prime $p = 17$ is a Fermat prime which splits
in $\Z[i]$ and so if $E : y^{2} = x^{3} - x$, then $\Q(E[17])/\Q$ is nilpotent by Proposition~\ref{prop:FermatOrMersenne}.
\end{proof}




\section{Nilpotent groups of prime-power level}\label{sec:PrimePowerLevel}

Suppose that $p$ is a prime and that $E/\QQ$ is an elliptic curve such that $\QQ(E[p^k])/\QQ$ is a nilpotent extension for some $k\geq 2$. 
The first observation we make is that since nilpotency is closed under quotients, Proposition \ref{prop:NilpIsClosed}, we know that $\QQ(E[p^i])/\QQ$ is a nilpotent extension for all $1\leq i \leq k,$ in particular, and this would mean $\QQ(E[p])/\QQ$ is a nilpotent extension. 
As usual, we will have to handle the case when $p=2$ separately, but thanks to Proposition \ref{prop:ProjImIsDi}, when $p$ is odd, we only have to deal with the case that $\Im\bar\rho_{E,p}$ is contained in either the normalizer of a split or non-split Cartan subgroup of $\GL_2(\ZZ/p\ZZ).$ 

\subsection{The case when $p=2$}

In this case there are exactly two ways that $\QQ(E[2])/\QQ$ can be nilpotent. 
In order for $\QQ(E[2])/\QQ$ to be nilpotent, either $E$ can have a square discriminant or $E$ can have a point of order 2 defined over $\QQ$.

We start this case by considering what the image of $\bar\rho_{E,4}$ could be if we know that $E$ has square discriminant and
\[
\Im\bar\rho_{E,2} \subseteq \left\langle\begin{pmatrix} 1&1\\1&0 \end{pmatrix}\right\rangle.
\] 
Let $\pi_2\colon \GL_2(\ZZ/4\ZZ) \to \GL_2(\ZZ/2\ZZ)$ be the standard component-wise reduction map and let
\[
G_2 = \left\langle\begin{pmatrix} 1&1\\1&0 \end{pmatrix}\right\rangle\hbox{ and }
G_4 := \pi_2^{-1}(G_2).
\]
The next step is to search for admissible nilpotent subgroups of $G_4$ up to conjugation with the additional property that their image mod $2$ is exactly {equal to} $G_2$. Such a subgroup must have
order which is a multiple of $6$ (a factor of $3$ coming from the image in $G_{2}$
and a factor of $2$ coming from the determinant being surjective). There are two such groups up to conjugacy: one with order $6$ and one with order $12$, with
the former contained in the latter. Neither of these groups are admissible. Computing conjugacy classes of the order $12$ subgroup shows that there are three conjugacy classes of elements of order $2$ and none of these elements fix an element of $(\ZZ/4\ZZ)^{2}$ of order $4$, which the image of complex conjugation under $\bar\rho_{E,4}$ must. (Code for this calculation can be found in the file {\tt subsec51.m} at \cite{code}.) Thus in the case when $\Im\bar\rho_{E,2}$ is conjugate to $G_2$, there is no way that $\QQ(E[4])/\QQ$ can be a nilpotent extension. 

The next case is when $E/\QQ$ has a point of order 2 defined over $\QQ$. In this case, $\QQ(E[2])/\QQ$ is either a quadratic extension or trivial. 
Letting $\pi_2\colon \GL_2(\ZZ/2^k\ZZ)\to \GL_2(\ZZ/2\ZZ),$ we have that $|\ker(\pi_2)| = 2^{4(k-1)}$. 
From this we have that \[
|\pi_2^{-1}(G_2)| = 2^{4(k-1)}|G_2|
\] and in particular $\pi_2^{-1}(G_2)$ is a 2-group. From 
Theorem~\ref{thm:Nilp}, we know that $\pi_2^{-1}(G_2)$ is \emph{always} nilpotent. The upshot of this is that if $E/\QQ$ is an elliptic curve with a point of order two defined over $\QQ$, then for every $k\geq 1$, $\QQ(E[2^k])/\QQ$ is a nilpotent extension.

\begin{proposition}\label{prop:PowersOf2}
	Let $E/\QQ$ be an elliptic curve such that $\QQ(E[2])/\QQ$ is a nilpotent extension. 
	Then, either the discriminant of $E$ is a square, in which case $\QQ(E[2^k])/\QQ$ is not nilpotent for any $k\geq 2$, or $E$ has a rational point of order $2$, in which case $\QQ(E[2^k])/\QQ$ is nilpotent for all $k\geq 1$.
\end{proposition}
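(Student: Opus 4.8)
The plan is to separate into the two cases already identified by Theorem~\ref{thm:j=0}(1) (for $j(E) = 0$) and by the classification of $\bar\rho_{E,2}$ in \cite{RZB}: namely $\Q(E[2])/\Q$ is nilpotent precisely when $\Im\bar\rho_{E,2}$ is conjugate to a subgroup of the Borel $\left\langle \left(\begin{smallmatrix}1&1\\0&1\end{smallmatrix}\right)\right\rangle$ (equivalently $E$ has a rational $2$-torsion point) or to $\left\langle\left(\begin{smallmatrix}1&1\\1&0\end{smallmatrix}\right)\right\rangle \simeq \Z/3\Z$ (equivalently $\disc(E)$ is a square and $E$ has no rational $2$-torsion point). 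These are the only two proper subgroups of $\GL_2(\Z/2\Z) \simeq S_3$ that are nilpotent but not contained in the one another, and every other admissible subgroup contains all of $S_3$, which is not nilpotent.

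\textbf{The square-discriminant case.} Here $\Im\bar\rho_{E,2}$ is conjugate to $G_2 = \left\langle\left(\begin{smallmatrix}1&1\\1&0\end{smallmatrix}\right)\right\rangle$, a group of order $3$. If $\Q(E[4])/\Q$ were nilpotent then its Galois group $H := \Im\bar\rho_{E,4}$ would be a nilpotent subgroup of $\GL_2(\Z/4\Z)$ with $\pi_2(H) = G_2$, where $\pi_2$ is reduction mod $2$. I would carry out a finite computation: enumerate, up to conjugacy in $\GL_2(\Z/4\Z)$, all admissible subgroups $H \subseteq \pi_2^{-1}(G_2)$ with $\pi_2(H) = G_2$, and check that none of them is nilpotent. (This is exactly the search referenced just above Proposition~\ref{prop:PowersOf2}, using $G_4 = \pi_2^{-1}(G_2)$.) The point is that $\ker(\pi_2)$ has order $2^4$, so $|\pi_2^{-1}(G_2)| = 3 \cdot 2^4$; for $H$ to be nilpotent its Sylow $3$-subgroup (of order $3$) must be normal, forcing $H$ to be a direct product of its Sylow $2$- and $3$-parts, and one checks this configuration is incompatible with admissibility (in particular with $\det(H) = (\Z/4\Z)^\times$ together with the trace-$0$, determinant-$-1$ fixed-point condition). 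Since the search returns nothing, $\Q(E[4])/\Q$ — hence $\Q(E[2^k])/\Q$ for all $k \geq 2$ — is not nilpotent in this case.

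\textbf{The rational $2$-torsion case.} Here $\Im\bar\rho_{E,2}$ is conjugate to a subgroup of the upper-triangular Borel $B_2 = \left\{\left(\begin{smallmatrix}1&*\\0&1\end{smallmatrix}\right)\right\}$ of $\GL_2(\Z/2\Z)$ (equivalently $E$ has a rational point of order $2$). Fix $k \geq 1$ and let $\pi_2\colon \GL_2(\Z/2^k\Z)\to\GL_2(\Z/2\Z)$. Then $\Im\bar\rho_{E,2^k} \subseteq \pi_2^{-1}(B_2)$, and since $|\ker \pi_2| = 2^{4(k-1)}$ while $|B_2| = 2$, the group $\pi_2^{-1}(B_2)$ has order a power of $2$; every $2$-group is nilpotent by Theorem~\ref{thm:Nilp}, and subgroups of nilpotent groups are nilpotent by Proposition~\ref{prop:NilpIsClosed}, so $\Im\bar\rho_{E,2^k}$ is nilpotent for every $k$. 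Via the Galois correspondence $\Im\bar\rho_{E,2^k}\simeq\Gal(\Q(E[2^k])/\Q)$, this gives that $\Q(E[2^k])/\Q$ is nilpotent for all $k \geq 1$, completing the proof.

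The main obstacle is the square-discriminant subcase: one must be sure the finite subgroup search in $\GL_2(\Z/4\Z)$ is exhaustive and that the failure of nilpotency is genuine rather than an artifact of only checking $k=2$. But $k=2$ suffices, because if $\Q(E[4])/\Q$ is not nilpotent then $\Q(E[2^k])/\Q$ cannot be nilpotent for any $k \geq 2$, since nilpotency descends to quotients (Proposition~\ref{prop:NilpIsClosed}) and $\Gal(\Q(E[4])/\Q)$ is a quotient of $\Gal(\Q(E[2^k])/\Q)$. The rational $2$-torsion subcase is essentially immediate from the $2$-group observation and requires no case analysis.
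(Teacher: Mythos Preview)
Your proposal is correct and follows essentially the same approach as the paper: a finite search in $\GL_2(\ZZ/4\ZZ)$ for admissible nilpotent subgroups surjecting onto $G_2$ in the square-discriminant case, and the observation that the full preimage of the Borel is a $2$-group in the rational $2$-torsion case. Your write-up is in fact slightly more careful than the paper's, since you make explicit that the two cases are disjoint (square discriminant \emph{and no rational $2$-torsion}) and you spell out why ruling out $k=2$ suffices for all $k\geq 2$.
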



\subsection{The case when $p$ is odd}

\begin{proposition}\label{prop:Oddprimesquared}
Suppose that $G$ is a nilpotent subgroup of $\GL_{2}(\ZZ/p^{2}\ZZ)$ and let
$\pi \colon G \to \GL_{2}(\FF_{p})$ be the reduction mod $p$ map. Assume that $p \nmid |\pi(G)|$. Then at least one of the following is true:
\begin{enumerate}
\item $G$ is abelian.
\item $\ker(\pi) \subseteq \{ \alpha I : \alpha \in (\ZZ/p^{2} \ZZ)^{\times} \text{ with } \alpha \equiv 1 \pmod{p} \}$.
\end{enumerate}
\end{proposition}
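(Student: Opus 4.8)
The plan is to analyze the kernel $K := \ker(\pi)$ as a normal subgroup of the nilpotent group $G$, using the standard structure of the reduction map $\GL_2(\ZZ/p^2\ZZ) \to \GL_2(\FF_p)$. Recall that $K$ is isomorphic to $(M_2(\FF_p), +)$ via $I + pX \mapsto X$, and that the conjugation action of $G$ on $K$ descends to the conjugation action of $\pi(G)$ on $M_2(\FF_p)$ (by matrix conjugation), since $g(I+pX)g^{-1} = I + p\,\bar g X \bar g^{-1}$ where $\bar g = \pi(g)$. So I would first translate the hypothesis: $K$ corresponds to an $\FF_p[\pi(G)]$-submodule $V \subseteq M_2(\FF_p)$, and $G$ sits inside the extension $1 \to V \to G \to \pi(G) \to 1$.

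Next I would use nilpotency. Since $p \nmid |\pi(G)|$ and $K$ is a $p$-group, $K$ is the Sylow $p$-subgroup of $G$; because $G$ is nilpotent, Theorem~\ref{thm:Nilp} gives that $G = K \times Q$ for a complement $Q \cong \pi(G)$, and moreover $K$ is central in $G$. Centrality of $K$ forces $Q$ to act trivially on $K$ by conjugation, i.e. $\pi(G)$ acts trivially on the submodule $V \subseteq M_2(\FF_p)$ — every element of $V$ is fixed by conjugation by every element of $\pi(G)$. The key step is then purely linear-algebraic: determine which matrices $X \in M_2(\FF_p)$ are fixed by conjugation by all of $\pi(G)$. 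Writing $H := \pi(G)$, the fixed space is the centralizer $\{X : gXg^{-1} = X \text{ for all } g \in H\}$. If $H$ contains two matrices generating an irreducible (or merely "large enough") subalgebra of $M_2(\FF_p)$, the centralizer is just the scalars $\FF_p I$; in that case $V \subseteq \FF_p I$, which is exactly alternative (2) after translating back through $I + pX \mapsto X$. The remaining possibility is that $H$ is "small" — contained (up to conjugacy) in a group whose centralizer algebra is larger than the scalars; the natural dividing line is whether $H$ is abelian. If $H$ is abelian, then $Q \cong H$ is abelian, $K$ is central hence abelian, and $G = K \times Q$ is a product of abelian groups, so $G$ is abelian — alternative (1). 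I would organize the argument as this dichotomy: either $H$ is abelian (case (1)), or $H$ is nonabelian, and then I claim $C_{M_2(\FF_p)}(H) = \FF_p I$ (case (2)).

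The main obstacle is justifying that last claim: a nonabelian subgroup $H$ of $\GL_2(\FF_p)$ with $p \nmid |H|$ has centralizer algebra equal to the scalars. Here I would invoke Proposition~\ref{prop:ProjIm}: since $p \nmid |H|$, $H$ lies in a Cartan subgroup, the normalizer of a Cartan, or has projective image $A_4, S_4, A_5$. A Cartan subgroup is abelian, so a nonabelian $H$ is either contained in the normalizer of a Cartan but not the Cartan, or is one of the exceptional types. In the normalizer-of-Cartan case, $H$ contains the conjugating element swapping the two eigenlines (split case) or the analogous element (non-split case); together with a non-scalar element of the Cartan this generates all of $M_2(\FF_p)$ as an algebra — I can check directly that the only matrices commuting with both a diagonal non-scalar matrix and $\left(\begin{smallmatrix}0&1\\1&0\end{smallmatrix}\right)$ are scalars, and similarly in the non-split case (this is essentially the same computation as in the proof of the earlier Proposition in Section~\ref{subsec:n=1}). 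In the exceptional cases, $H$ acts irreducibly on $\FF_p^2$ (its projective image is not contained in a Borel), so by Schur's lemma over the algebraically closed-enough situation — or by a direct argument — the centralizer algebra is a division algebra over $\FF_p$ of dimension dividing $4$ and acting on a $2$-dimensional space, forcing it to be $\FF_p$ itself (it cannot be the quadratic field $\FF_{p^2}$ since that would make $H$ lie in a non-split Cartan normalizer, already handled). Assembling these cases gives $C_{M_2(\FF_p)}(H) = \FF_p I$ whenever $H$ is nonabelian with $p \nmid |H|$, completing alternative (2) and hence the proposition.
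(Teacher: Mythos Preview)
Your proof is correct and shares its skeleton with the paper's: both identify $K = \ker(\pi)$ as the Sylow $p$-subgroup, use nilpotency to write $G = K \times Q$ with $Q \cong \pi(G)$, and exploit that $K$ is central. The difference is where the dichotomy is placed. You split on whether $H = \pi(G)$ is abelian: if not, you invoke Proposition~\ref{prop:ProjIm} to classify the nonabelian possibilities and then verify case by case that $C_{M_{2}(\FF_{p})}(H) = \FF_{p} I$, forcing $K$ to be scalar. The paper instead splits on whether $K$ contains a non-scalar element $I + pX$: if so, centrality forces every $Y \in \pi(G)$ to commute with $X$, and since a non-scalar $2 \times 2$ matrix is cyclic, its centralizer in $M_{2}(\FF_{p})$ is the commutative ring $\FF_{p}[X]$, whence $\pi(G)$ is abelian. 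The two arguments are contrapositives of each other, but the paper's is more economical --- it replaces your appeal to the subgroup classification and the ensuing case analysis (Cartan normalizers, exceptional images, Schur's lemma) with a single linear-algebra fact about centralizers of cyclic matrices. Your route works, but you might note that the heavy machinery of Proposition~\ref{prop:ProjIm} is avoidable here.
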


\begin{proof}
Let $P$ be a Sylow $p$-subgroup of $G$. Since $|\pi(G)|$ has order coprime to $p$, we have that $\pi(P) = \{ 1 \}$. In particular, $P$ is contained in the set of matrices $\equiv I \pmod{p}$. The set of matrices $\equiv I \pmod{p}$ is an abelian subgroup of $\GL_{2}(\ZZ/p^{2} \ZZ)$ order $p^{4}$. In particular $P$ is abelian and $\ker \pi = P$. Let 
\[
H = \prod_{\substack{Q \in \Syl_{q}(G) \\ q \ne p}} Q
\]
be a complement of $P$ in $G$. Note that $\pi(G) = \pi(PH) = \pi(P) \pi(H) = \{ 1 \} \cdot \pi(H) = \pi(H)$ and also $H \cap \ker \pi \subseteq H \cap P = \{ 1 \}$. Thus $\pi \colon H \to \pi(G)$ is an isomorphism.

Case I: There exists an element of $P$ that is not a scalar multiple of the identity.

This implies that there is some $X \in M_{2}(\FF_{p})$ so that $I + pX \in P$ and $X$ is not a scalar multiple of the identity. If $Y \in \pi(G)$, there is some $\tilde{Y} \in H$ so that $\pi(\tilde{Y}) = Y$. The assumption that $G$ is nilpotent implies that $(I + pX)$ must commute with $\tilde{Y}$, and this implies that $XY = YX$ in $M_{2}(\FF_{p})$. The assumption on $X$ implies that $X$ is a {\bf cyclic matrix}. This is a matrix $X$ whose minimal polynomial and characteristic polynomial are the same. 

Corollary 4.4.18 of \cite{HornJohnson} implies that for every cyclic matrix $X$, its centralizer in $M_{2}(\FF_{p})$ is equal to $\FF_{p}[X]$, the set of all polynomials in $X$ with coefficients in $\FF_{p}$. This is a commutative subring of $M_{2}(\FF_{p})$, and this implies that $\pi(G) \subseteq \FF_{p}[X]^{\times}$ is abelian. Since $H \simeq \pi(G)$, it follows that $H$ is abelian. Since $G \simeq P \times H$, it follows that $G$ is abelian.

Case II: Every element of $P$ is a scalar multiple of the identity.

Since $P = \ker(\pi)$, in this case, condition (2) is clearly true.
\end{proof}

As a consequence of this result, we can establish the following result.

\begin{prop}\label{prop:MainP2}
Let $E/\QQ$ be an elliptic curve and let $p$ be an odd prime. Then $\QQ(E[p^2])/\QQ$ is not a nilpotent extension. 
\end{prop}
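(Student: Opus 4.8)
The plan is to reduce to Proposition~\ref{prop:Oddprimesquared} by first invoking the results of Section~\ref{sec:PrimeLevel}. Suppose $\QQ(E[p^2])/\QQ$ is nilpotent. Since nilpotency is preserved under quotients (Proposition~\ref{prop:NilpIsClosed}), $\QQ(E[p])/\QQ$ is also nilpotent, so $G := \Im\bar\rho_{E,p^2}$ has nilpotent reduction $\pi(G) = \Im\bar\rho_{E,p}$ mod $p$. If the CM case of Section~\ref{subsec:CMEllipticCurves} applies, then Lemma~\ref{lem:Size_p^k} together with Proposition~\ref{prop:CM_preim} forces the extension $\QQ(E[p^2])/\QQ(E[p])$ to have degree $p^2$ (for $p \geq 5$; for $p = 3$ one uses the explicit tables in \cite{RSZB}), and one checks directly using Lemma~\ref{lem:CenterN_O} that the resulting group has trivial center modulo scalars and hence is not nilpotent by Proposition~\ref{prop:TrivialCenter} — I would dispatch the CM curves (including $j(E) = 0$) this way or by appeal to the classification in \cite{RSZB}. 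So assume $E$ is non-CM; then by Proposition~\ref{prop:ProjImIsDi}, $\Im\bar\rho_{E,p}$ is contained in the normalizer of a split or non-split Cartan subgroup of $\GL_2(\ZZ/p\ZZ)$. In either case $p \nmid |\Im\bar\rho_{E,p}| = |\pi(G)|$, since these normalizers have order $2(p-1)^2$ or $2(p^2-1)$, both coprime to $p$. Thus Proposition~\ref{prop:Oddprimesquared} applies.

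From Proposition~\ref{prop:Oddprimesquared} we get two cases. In case (1), $G = \Im\bar\rho_{E,p^2}$ is abelian, so $\QQ(E[p^2])/\QQ$ is an abelian extension; but by the classification of abelian division fields in \cite{AbelDivFields}, no non-CM elliptic curve has $\QQ(E[p^2])/\QQ$ abelian for $p$ odd (indeed $\QQ(\zeta_{p^2}) \subseteq \QQ(E[p^2])$ already forces a nontrivial obstruction, and the full classification rules out the remaining possibilities). In case (2), $\ker(\pi) \subseteq \{\alpha I : \alpha \equiv 1 \pmod p\}$, so $[\QQ(E[p^2]) : \QQ(E[p])] = |\ker(\pi)| \leq p$. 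On the other hand, $\QQ(\zeta_{p^2}) \subseteq \QQ(E[p^2])$ by the Weil pairing, and $\QQ(\zeta_{p^2}) \not\subseteq \QQ(E[p])$ by Theorem~\ref{thm:MainCoin}(2) (as $p$ is odd), so $\QQ(E[p],\zeta_{p^2})/\QQ(E[p])$ already has degree exactly $p$. Combining these, $\QQ(E[p^2]) = \QQ(E[p],\zeta_{p^2})$, i.e. $E$ has a near coincidence of level $(p^2, p)$.

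But Proposition~\ref{prop:nearsummary} says this forces $p = 2$ or $p = 3$, with $p = 3$ occurring only for rational points on the modular curve \texttt{9.27.0.1}. Since we are assuming $p$ is odd, the only remaining possibility is $p = 3$ with $\Im\bar\rho_{E,9}$ contained in \texttt{9.27.0.1}. To finish, I would examine this group directly: either it is itself not nilpotent, or — more to the point — by the remark following Proposition~\ref{prop:nearsummary}, a non-CM elliptic curve with mod $9$ image in \texttt{9.27.0.1} satisfies $\operatorname{ord}_3(j(E)) \geq 7$, hence has potentially supersingular reduction at $3$; but then the ramification argument of Propositions~\ref{prop:TotallyRam} and~\ref{prop:n=1} (now with $p = 3$, using that $\QQ(E[3])/\QQ$ non-abelian has a non-abelian Sylow $2$-subgroup and invoking \cite{HansonSmith}) gives a contradiction. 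Alternatively, and most cleanly, one simply checks in \cite{RSZB} that the group \texttt{9.27.0.1} is not nilpotent as an abstract group (its projective image being $D_{2(3+1)/\text{something}}$ of non-$2$-power order, or by direct computation), which immediately contradicts the assumption.

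The main obstacle is the bookkeeping in case (1): ruling out an abelian $\QQ(E[p^2])/\QQ$ for non-CM $E$ requires either citing the full classification of \cite{AbelDivFields} or giving a self-contained argument (e.g. an abelian image would be contained in a Cartan subgroup of $\GL_2(\ZZ/p^2\ZZ)$, but then combining the Weil pairing constraint with the Kronecker–Weber theorem and the fact that $\QQ(E[p^2])$ would be both abelian and contain the non-abelian-over-$\QQ$ field forced by torsion growth yields a contradiction). The other potential subtlety is making sure the CM cases — especially $j(E) = 0$, where the mod $p$ image need not sit inside a Cartan normalizer when $p \mid 3\disc(\mathcal{O})$ — are handled before invoking Proposition~\ref{prop:Oddprimesquared}; Proposition~\ref{prop:FermatOrMersenne} and Theorem~\ref{thm:j=0} reduce these to the (non-nilpotent) Borel-type images, which is quick.
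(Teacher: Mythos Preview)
Your proof is correct and follows essentially the same route as the paper: reduce to Proposition~\ref{prop:Oddprimesquared}, dispatch case (1) via \cite{AbelDivFields}, and in case (2) recognize a near coincidence of level $(p^2,p)$, which by Proposition~\ref{prop:nearsummary} forces $p=3$ and mod $9$ image inside \texttt{9.27.0.1}, a group that is not nilpotent. The one difference is that you split off the CM case beforehand, which is unnecessary --- the paper's argument applies uniformly (both \cite{AbelDivFields} and Proposition~\ref{prop:nearsummary} cover CM curves), and the hypothesis $p \nmid |\pi(G)|$ that you carefully verify via Proposition~\ref{prop:ProjImIsDi} holds for any admissible nilpotent mod $p$ image with $p$ odd, CM or not.
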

\begin{proof}
Assume that $E/\QQ$ is an elliptic curve, $p$ is an odd prime, and $\QQ(E[p^{2}])/\QQ$ is nilpotent. Let $G = \Im\bar\rho_{E,p^{2}}$. If we are in case (1) of Proposition~\ref{prop:Oddprimesquared}, then $\QQ(E[p^{2}])/\QQ$ is abelian, which contradicts the main result of \cite{AbelDivFields}. If we are in case (2) of Proposition~\ref{prop:Oddprimesquared} and $\pi \colon G \to \GL_{2}(\FF_{p})$ is the reduction mod $p$ map, then $\ker(\pi) \cap (G \cap \SL_{2}(\ZZ/p^{2} \ZZ)) = 1$ and this implies that we have a near coincidence of level $(p^{2},p)$. By Proposition~\ref{prop:nearsummary}, we must have that $p = 3$ and $E$ corresponds to a rational point on the curve with RSZB label \texttt{9.27.0.1}. However, the main result of \cite{RSZB} implies that for such an elliptic curve, the mod $9$ image of Galois must equal \texttt{9.27.0.1}, which is not nilpotent. 
\end{proof}

As a consequence, if $E/\QQ$ is an elliptic curve, $p$ is an odd prime, and $n \geq 2$, $\QQ(E[p^{n}])/\QQ$ is not a nilpotent extension.

\section{Nilpotent groups of composite level}\label{sec:CompositeLevel}

In this section, we will complete the proof of Theorem~\ref{thm:main_Nilp}.
Since $\QQ(E[n])$ is the composite of $\QQ(E[p^{k}])$ for every prime power factor $p^{k}$ of $n$, we have that $\QQ(E[n])/\QQ$ is nilpotent if and only if every $\QQ(E[p^{k}])/\QQ$ is nilpotent. From Section~\ref{sec:PrimePowerLevel}, this only occurs if $k = 1$, or $p = 2$ and $E$ has a rational point of order $2$.

First, we consider the case that $E/\QQ$ is an elliptic curve with complex multiplication. From Section~\ref{sec:PrimeLevel}, we have a classification of when $\QQ(E[p])/\QQ$ is nilpotent based on the mod $p$ image of Galois for $E$, which is determined by whether $p$ splits, is inert, or ramifies in the CM field. From Section~\ref{sec:PrimePowerLevel}, we have that $\QQ(E[p^{k}])/\QQ$ is nilpotent for $k \geq 2$ if and only if $p = 2$ and $E$ has a rational point of order $2$. From these results, the complex multiplication cases of Theorem~\ref{thm:main_Nilp} follow.

Next we consider the case that $E/\QQ$ is an elliptic curve without complex multiplication under the assumption of Conjecture~\ref{conj:Uniformity}.
In this case, Section~\ref{sec:PrimeLevel} implies that if $\QQ(E[p])/\QQ$ is nilpotent then either $p \in \{ 2, 5 \}$ or $p$ is a Mersenne prime and $\Im \bar\rho_{E,p}$ is contained in the normalizer of the non-split Cartan subgroup of $\GL_{2}(\FF_{p})$, which entails that $p \in \{ 2, 3, 5, 7 \}$.
All that remains is for us to determine which combinations of the possible nilpotent mod $p$ images can occur simultaneously. 
Given possible mod $p$ and mod $q$ images of Galois $G$ and $H$, we construct the fiber product $X_{G} \times_{X_{0}(1)} X_{H}$. This is the curve given by $\pi_G(x) = \pi_H(y)$. 
The results of this computation are listed in the following table.

\begin{center}
\begin{table}[h]
\renewcommand{\arraystretch}{1.5}
\begin{tabular}{|c|c|c|c|c|}\hline
$(p,q)$ & $\Im\bar\rho_{E,p}$ & $\Im\bar\rho_{E,q}$ & $\Im\bar\rho_{E,pq}$ & Has Non-Cuspidal Rational Points \\\hline
\multirow{2}{*}{(2,3)}& \texttt{2.2.0.1} & \texttt{3.3.0.1} & \texttt{6.6.1.1} & No - Genus 1 Rank 0 \\ 
 & \texttt{2.3.0.1} & \texttt{3.3.0.1} & \texttt{6.9.0.1} & Yes \\\hline

\multirow{2}{*}{(2,5)}& \texttt{2.2.0.1} & \texttt{5.15.0.1} & \texttt{10.30.2.2} & No - Genus 2 Rank 0 \\ 
 & \texttt{2.3.0.1} & \texttt{5.15.0.1} & \texttt{10.45.1.1}& No - Genus 1 Rank 0 \\\hline

\multirow{2}{*}{(2,7)}& \texttt{2.2.0.1} & \texttt{7.21.0.1} & \texttt{14.42.3.1} & No - Genus 3 Rank 0 \\

 & \texttt{2.3.0.1} & \texttt{7.21.0.1} & \texttt{14.63.2.1} & No - Genus 2 Rank 0 \\\hline

\multirow{1}{*}{(3,5)}& \texttt{3.3.0.1} & \texttt{5.15.0.1} & \texttt{15.45.1.1} & Yes \\\hline
\multirow{1}{*}{(3,7)}& \texttt{3.3.0.1} & \texttt{7.21.0.1} & \texttt{21.63.1.1} & Yes\\\hline
\multirow{1}{*}{(5,7)}& \texttt{5.15.0.1} & \texttt{7.21.0.1} & \texttt{35.315.19.1} & No - Genus 19 Analytic Rank 15\\\hline

\end{tabular}
\caption{Potential composite level nilpotent images}\label{tab:CompositeLevel}
\end{table}
\end{center}

The curves that are genus 1 or 2 with rank zero can be handled using standard techniques. The rank 0 genus 3 curve was shown to have no non-cuspidal rational points corresponding to elliptic curves without complex multiplication in \cite{JacksonCompImage}. This leaves us with with one remaining curve, with label
\texttt{35.315.19.1}. Theorem A.7 of Appendix~A to \cite{RSZB} shows that 
if $X_{H}$ is a modular curve of level $N$, every simple factor of the Jacobian of $X_{H}$ is isogenous to a simple factor of $J_{1}(N^{2})$, and Section~6 of \cite{RSZB} explains how this information can be used to determine the decomposition of the Jacobian of $X_{H}$. This decomposition is recorded in the beta version of the LMFDB
at \href{https://beta.lmfdb.org/ModularCurve/Q/35.315.19.a.1/}{the link here}. In particular, $X_{H}$ factors up to isogeny as the product of nine $\Q$-simple abelian varieties (of dimensions $2$, $3$ and $4$).

Work of Kolyvagin-Logachev \cite{KolyLoga} shows that if a simple abelian variety $A$ of $GL_{2}$-type has analytic rank $0$ it must have rank $0$, while if it has
analytic rank $r$ equal to its dimension, then it must also have algebraic rank $r$. (This latter result is not stated by Kolyvagin and Logachev, but their ideas suffice to prove it. For more detail about how this follows, see \cite[Section~7]{DograLeFourn}.) This hypothesis is easy to verify since $L(A,s)$ factors as a product of
$\dim A$ modular $L$-functions. From this, it follows that the analytic and algebraic
rank of the Jacobian of \texttt{35.315.19.1} are both $15$. In theory, this curve could be attacked using the method of Chabauty and Coleman since the genus higher than the rank, but computing on a curve of genus 19 is rather unwieldy. However, the techniques
of Lemos \cite{LemosSomeCases} apply to this situation. 

The first theorem proved by Lemos in \cite{LemosSomeCases} states the following.
\begin{theorem}{\rm \cite[Theorem 1.4]{LemosSomeCases}}\label{thm:Lemos}
	Let $E/\QQ$ be an elliptic curve without complex multiplication. Suppose that there exists a prime $q$ for which $\Im \bar\rho_{E,q}$ is contained in a subgroup of $C_s^+(q)$. Then $\bar\rho_{E,p}$ is surjective for all $p>37.$
\end{theorem}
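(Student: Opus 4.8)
The plan is to argue by contradiction: suppose $E/\QQ$ is non-CM, that $\Im\bar\rho_{E,q}$ is contained in $C_s^+(q)$ for some prime $q$, and that $\bar\rho_{E,p}$ fails to be surjective for some prime $p>37$. The first step is to pin down $\Im\bar\rho_{E,p}$ using the classification of maximal subgroups of $\GL_2(\FF_p)$ (Proposition~\ref{prop:ProjIm}) together with what is known about the associated modular curves. A Borel image would give $E$ a rational $p$-isogeny, impossible for a non-CM curve with $p>37$ by Mazur's isogeny theorem; the exceptional projective images $A_4,S_4,A_5$ force $p$ to be small; and a split-Cartan-normalizer image is excluded for $p>11$ by Theorem~\ref{thm:BPR}. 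Hence $\Im\bar\rho_{E,p}$ must lie in $C_{ns}^+(p)$, the normalizer of a non-split Cartan subgroup. This is as far as the mod $p$ data alone can take us — ruling out this case outright is exactly Conjecture~\ref{conj:Uniformity} — so the hypothesis at $q$ must now be exploited.

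The second step extracts geometric content from the hypothesis at $q$. Having $\Im\bar\rho_{E,q}\subseteq C_s^+(q)$ means that the two eigenlines of the split Cartan furnish a $\Gal(\overline{\QQ}/\QQ)$-stable pair $\{C_1,C_2\}$ of order-$q$ subgroups of $E$, each defined over a (possibly trivial) quadratic field $K$ and interchanged by $\Gal(K/\QQ)$. Apart from the degenerate case in which $C_1$ and $C_2$ are both rational — where $E$ carries two independent rational $q$-isogenies and is dispatched directly by Mazur's isogeny theorem — the curve $E/C_1$ is an elliptic curve over $K$ admitting a cyclic $q^2$-isogeny to its Galois conjugate $E/C_2\cong(E/C_1)^{\sigma}$, i.e.\ a $\QQ$-curve of degree $q^2$. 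By Ribet's theorem on $\QQ$-curves (using the now-proven Serre modularity conjecture) this $\QQ$-curve is modular, and consequently $j(E)$, carrying both its level-$q$ and level-$p$ structure, is a rational point on the fiber product $X_s^+(q)\times_{X(1)}X_{ns}^+(p)$, a modular curve whose Jacobian decomposes into modular abelian varieties attached to newforms of controlled level.

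The third step is Mazur's method applied to this fiber product. The gain from passing to $X_s^+(q)\times_{X(1)}X_{ns}^+(p)$ instead of $X_{ns}^+(p)$ alone is that the extra structure at $q$ produces a \emph{new} isogeny factor $A$ of the Jacobian — a winding quotient whose $L$-function does not vanish at $s=1$, so that $A(\QQ)$ is finite by Kolyvagin--Logachev — even when $\mathrm{Jac}(X_{ns}^+(p))$ has large rank. One then shows that the composite from the fiber product to $A$ is a formal immersion at a suitable cusp modulo a small auxiliary prime, which together with the finiteness of $A(\QQ)$ forces $j(E)$ to be a cusp or a CM point, contradicting that $E$ is non-CM; the threshold $p>37$ is what makes both the Borel step and the formal-immersion estimate go through. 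The main obstacle is precisely this last step: one must identify the winding quotient $A$ concretely inside the Jacobian of the fiber product and compute enough Hecke eigenvalues and $q_\ell$-expansions of the relevant newforms to verify the formal immersion uniformly in $p$. A secondary difficulty is the bookkeeping for the genuinely small primes $q$ (e.g.\ $q=2$, where the hypothesis merely says $E$ has a rational $2$-torsion point), for which the fiber product has small genus and must be treated by hand, and where one must check that the quadratic field $K$ produced at $q$ does not interact with the non-split-Cartan structure at $p$.
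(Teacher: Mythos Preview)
This theorem is cited from Lemos's paper and is not proved in the present paper; the only ``proof'' here is the one-sentence sketch immediately following the statement: Mazur's formal immersion argument is used to show that $j(E)\in\ZZ$, and then the explicit $j$-maps for the relevant modular curves are used to derive a contradiction.

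Your reduction in the first paragraph --- ruling out Borel (Mazur's isogeny theorem), exceptional images, and split-Cartan-normalizer images (Bilu--Parent--Rebolledo) to force $\Im\bar\rho_{E,p}\subseteq C_{ns}^{+}(p)$ --- is correct and matches the standard opening of such arguments. After that, however, your proposal diverges from the route the paper attributes to Lemos. You propose the classical Mazur strategy: locate a rank-zero winding quotient $A$ of the Jacobian of the fiber product $X_s^{+}(q)\times_{X(1)}X_{ns}^{+}(p)$, verify formal immersion into $A$ at a cusp, and conclude that the putative point is cuspidal or CM. The paper instead describes an integrality-plus-height argument: the combination of the two level structures forces $j(E)\in\ZZ$ (this is where the formal immersion is deployed), and then a Runge-type analysis of the explicit $j$-maps finishes the job. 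The practical advantage of the latter is exactly the obstacle you flag: one does not need to produce a rank-zero isogeny factor of the Jacobian, which is delicate precisely because $\mathrm{Jac}(X_{ns}^{+}(p))$ has rank at least its genus.

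Two smaller remarks. First, the $\QQ$-curve/Ribet detour in your second paragraph is unnecessary: $E$ is already defined over $\QQ$ and modular by the modularity theorem, and the fact that $j(E)$ lies on the fiber product is immediate from the hypotheses without invoking modularity at all. Second, your worry about small $q$ is well-placed but is handled in Lemos's paper (and in this paper's applications) simply because the specific $q$'s needed are small enough that the relevant $j$-maps are explicit.
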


The work in Lemos's paper does not, however, require full strength of the assumption that $p > 37$. We wish to explain why the work in Lemos's paper implies the following result.

\begin{theorem}
\label{thm:LemosExtend}
Let $E/\QQ$ be an elliptic curve without complex multiplication. Suppose that there exist a prime $q$ for which $\Im \bar\rho_{E,q}$ is contained in a subgroup of $C_s^+(q)$. Then $\bar\rho_{E,p}$ is not contained in $C_{ns}^{+}(p)$ for any $p > 3$
with $p \ne q$.
\end{theorem}
Setting $p=7$ and $q=5$ above implies that the only rational points on the modular curve \texttt{35.315.19.1} are cusps or CM points.

\begin{proof}[Proof of Theorem~\ref{thm:LemosExtend}]
As this result is really contained in \cite{LemosSomeCases} we will give an overview of the steps in Lemos's argument highlighting the necessary hypotheses on $p$ and $q$.
First, Theorem~\ref{thm:BPR} implies that $q \in \{ 2, 3, 5, 7 \}$. As a consequence, $X_{0}(q)$ has genus zero.

The modular curve $X_{s}(q)$ parametrizes elliptic curves with two independent cyclic $q$-isogenies. In Lemma 3.4 of \cite{LemosSomeCases}, Lemos shows that there is an isomorphism
\[
  \theta \colon X_{s}(q) \times_{X_{0}(1)} X_{ns}^{+}(p) \to X_{0}(q^{2}) \times_{X_{0}(1)} X_{ns}^{+}(p)
\]
and that $\theta$ commutes with natural involutions on the source and the target. (The involution on the source interchanges the kernels of the two isogenies, and the involution on the target is the Atkin-Lehner involution $w_{q^{2}}$.) Lemos then defines
a map $g \colon X_{0}(q^{2}) \times_{X_{0}(1)} X_{ns}^{+}(p) \to J(X_{0}(q) \times_{X_{0}(1)} X_{ns}^{+}(p))$ by taking a point $P$ and mapping it to the difference of its images under the two different degeneracy maps (coming from the two covers $X_{0}(q^{2}) \to X_{0}(q)$).

Work of Darmon and Merel \cite[Proposition 7.1]{DarmonMerel} shows that there is a projection $\pi \colon J(X_{0}(q) \times_{X_{0}(1)} X_{ns}^{+}(p)) \to A$ to a positive-dimensional abelian variety with rank $0$ for which the kernel of $\pi$ is connected and is stable under the action of Hecke operators. (For their application, Darmon and Merel only need this for $q = 2$ or $q = 3$, but as they indicate, this part of the argument applies to any $q$ with $p \nmid q$.) 

Define $h = \pi \circ g$ and let $\mathcal{O}$ be the ring of integers in $\Q(\zeta_{p} + \zeta_{p}^{-1})$ and $R = \mathcal{O}[1/(2qp)]$. Roughly speaking, Proposition 3.6 of \cite{LemosSomeCases} shows that $h$ is a formal immersion at $\infty$ for every prime ideal $\mathfrak{p}$ of $R$. The proof of this proposition is standard and the key assumption is that $X_{0}(q)$ has genus $0$. It follows from this that $\theta \circ h \colon X_{s}(q) \times_{X_{0}(1)} X_{ns}^{+}(p) \to A$ is a formal immersion at $\infty$ for every prime ideal $\mathfrak{p}$ of $R$. 

Now, if $P$ is a rational point on $X_{s}(q) \times_{X_{0}(1)} X_{ns}^{+}(p)$ corresponding to an elliptic curve $E$ which has potential multiplicative reduction at some prime $\ell \not\in \{ 2, p, q \}$, then it meets one of the cusps at the fiber at $\ell$. Without loss of generality this cusp can be chosen to be infinity and this implies that $\theta \circ h(P) = Q \in A(\Q)$ reduces to zero in $\tilde{A}(\FF_{\ell})$. (Here $\tilde{A}$ is the special fiber of the N\'eron model of $A$ over $\Z_{\ell}$.)
This implies that $Q = 0$ and the fact that $\theta \circ h$ is a formal immersion
and $\ell > 2$ implies that $P = \infty$. (One way to make this conclusion is using
Proposition 2.4 of \cite{LF16}.)

It follows from this that if $E/\Q$ is an elliptic curve with mod $p$ image contained in $C_{ns}^{+}(p)$ and mod $q$ image contained in $C_{s}^{+}(q)$, then
$E$ cannot have potentially multiplicative reduction at any prime $\ell \not\in \{2, p, q\}$. The primes of potentially multiplicative reduction are precisely those that
divide the denominator of $j(E)$. As a consequence $j(E) \in \Z[1/(2pq)]$. More is true however. Lemos notes in \cite[Proposition 3.3]{LemosSomeCases} that the assumption that the mod $p$ image of Galois is contained in $C_{ns}^{+}(p)$ implies that $E$ has potentially supersingular reduction at $p$, and that if $E$ has potentially multiplicative reduction at $\ell \ne p$, then $\ell \equiv \pm 1 \pmod{p}$. (These same observations were made earlier by Zywina.) 
It follows that the only primes that can divide the denominator of $j(E)$ are those
that are $\equiv \pm 1 \pmod{p}$. Since $p > 3$, none of $2$, $3$, $5$ or $7$
(the only options for $q$) can be $\equiv \pm 1 \pmod{p}$. It follows
that $j(E) \in \Z$. 

Lemos proceeds to show using an explicit isomorphism $X_{s}^{+}(q) \simeq \mathbb{P}^{1}$ that there are only finite integral $j$-invariants
of elliptic curves $E/\Q$ with mod $q$ image of Galois contained in $C_{s}^{+}(q)$. For $q \in \{ 3, 5, 7 \}$ these are explicitly listed on \cite[p. 749]{LemosSomeCases}. They all have CM except for $j = -5000$ ($q = 5$) and $j=-1728$ ($q=3$). For elliptic curves with these $j$-invariants, the image of $\rho_{E,p}$ is not contained
in the normalizer of a non-split Cartan subgroup for any $p > 3$. The
case of $q = 2$ was previously handled in \cite[p. 142]{Lemos2}. Here there are $25$ integral $j$-invariants of elliptic curves $E$ with image in $C_{s}^{+}(2)$ (which is a Borel subgroup of $\GL_{2}(\FF_{2})$). Of these $25$, there are $18$ are non-CM $j$-invariants. For elliptic curves with these
$j$-invariants, the LMFDB indicates that the image of $\rho_{E,p}$ is contained
in $C_{ns}^{+}(p)$ for some $p$ only for $p = 3$ and $j \in \{ -64, 4913, 238328, 16974593 \}$.
\end{proof}

In Table~\ref{tab:Models}, we give models for the modular curves from Table~\ref{tab:CompositeLevel} that have non-cuspidal rational points.

\begin{center}
\begin{table}[h]
\renewcommand{\arraystretch}{2.2}
\begin{tabular}{|c|c|c|}\hline
$G$ & $X_G$ & $\pi_G\colon X_G\to \PP_\QQ^1$\\\hline
\texttt{ 2.2.0.1 }& $\PP^1$ & $f_2(t) = t^2+1728$ \\\hline
\texttt{ 2.3.0.1 }& $\PP^1$ & $h_2(t) = \frac{(256 - t)^3}{t^2}$ \\\hline
\texttt{ 3.3.0.1 }& $\PP^1$ & $f_3(t) = t^3$ \\\hline
\texttt{ 5.15.0.1 }& $\PP^1$ & $ f_5(t) = \frac{(t+5)^3(t^2-5)^3(t^2+5t+10)^3}{(t^2+5t+5)^5}$ \\\hline
\texttt{ 6.9.0.1 }& $\PP^1$ & $ f_6(t) = \frac{(t^3+3t^2+3t-15)^3}{(t+1)^3}$ \\\hline
\texttt{ 7.21.0.1 }& $\PP^1$ & $ f_7(t) = \frac{(2t-1)^3(t^2-t+2)^3(2t^2+5t+4)^3(5t^2+2t-4)^3}{(t^3+2t^2-t-1)^7} $
\\\hline
\texttt{ 15.45.1.1 }& $y^2+y = x^3+1$ & $ f_{15}(x,y) = \frac{(y+3)^{3}(y^{2}-4y-1)^{3}(y^{2}+y+4)^{3}}{x^6(y^{2}+y-1)^{5}}$\\\hline
\texttt{ 21.63.1.1 }& $ y^{2} + y= x^{3} + 12$ & $f_{21}(x,y) = f_7\left( \frac{x^2 + 5x - 14}{x^2 - 4x + 3y + 19} \right)$\\\hline
\end{tabular}
\caption{Models of the modular curves relevant for Theorem~\ref{thm:main_Nilp}. The models for curves of prime level come from \cite{SZ}. The remaining genus 0 curves can be computed as fiber products of the curves of prime level. The models for the genus 1 modular curves of composite level are computed as the fiber product of the prime level modular curves that are computed in \cite{SZ} as well.}\label{tab:Models}
\end{table}
\end{center}

Working without the assumption of Conjecture~\ref{conj:Uniformity}, we
must consider the possibility that there is an elliptic curve $E/\Q$ for which $\Im\bar\rho_{E,5}$ is contained in the normalizer of the split Cartan mod $5$ and for which $\Im \bar\rho_{E,p}$ is contained in the normalizer of a non-split Cartan modulo $p$ for some Mersenne prime $p$. Theorem~\ref{thm:LemosExtend} above shows this is only possible for $p = 3$, and the elliptic curves for which this occurs are parametrized by the modular curve with label \texttt{15.45.1.1}. 

In addition, we must consider the possibility that there is an elliptic curve $E/\Q$ for which $\im \bar\rho_{E,p}$ is contained in the normalizer of a non-split Cartan modulo $p$ for some Mersenne prime $p$, and which also has a rational point of order $2$. This implies the mod $2$ image of Galois is contained in a Borel subgroup,
which for $p=2$ is equal to $C_{s}^{+}(2)$. The desired result again follows from
Theorem~\ref{thm:LemosExtend}. This concludes the proof of Theorem~\ref{thm:main_Nilp}.

\bibliographystyle{plain} 
\bibliography{bib}

\end{document}